\newcounter{commentcounter}
\renewcommand*{\backref}[1]{}
\renewcommand*{\backrefalt}[4]
{
    \ifcase #1
        No citation in the text.
    \or
        Cited on Page #2.
    \else
        Cited on Pages #2.
    \fi
}
\newtheorem{thm}{Theorem}[section]
\newtheorem{lemma}[thm]{Lemma}
\newtheorem{corollary}[thm]{Corollary}
\newtheorem{prop}[thm]{Proposition}
\newtheorem{question}[thm]{Question}
\numberwithin{equation}{thm}
\newtheorem{thmx}{Theorem}
\theoremstyle{definition}
\newtheorem{defn}[thm]{Definition}
\newtheorem{remark}[thm]{Remark}
\theoremstyle{plain}
    \newtheoremstyle{TheoremNum}
        {8.0pt plus 2.0pt minus 4.0pt}{8.0pt plus 2.0pt minus 4.0pt} 
        {\itshape} 
        {-0.15cm} 
        {\bfseries} 
        {.} 
        { }  
        {\thmname{#1}\thmnote{ \bfseries #3}}
    \theoremstyle{TheoremNum}
    \newtheorem{duplicate}{}
\newcommand*{\claimproofname}{My proof}
\DeclareMathOperator{\Aut}{\mathrm{Aut}}
\DeclareMathOperator{\im}{\mathrm{im}}
\newcommand{\TAP}{\mathsf{TAP}}
\newcommand{\calb}{{\mathcal{B}}}
\newcommand{\calg}{{\mathcal{G}}}
\newcommand{\calk}{{\mathcal{K}}}
\newcommand{\pialg}{\pi^{\textrm{alg}}_1}
\newcommand{\PSL}{\mathrm{PSL}}
\newcommand{\onto}{\twoheadrightarrow}
\def\Z{\mathbb{Z}}
\def\R{\mathbb{R}}
\def\Q{\mathbb{Q}}
\def\C{\mathbb{C}}
\newcommand{\NN}{\mathbb{N}}
\newcommand{\ZZ}{\mathbb{Z}}
\newcommand{\CC}{\mathbb{C}}
\newcommand{\RR}{\mathbb{R}}
\newcommand{\QQ}{\mathbb{Q}}
\newcommand{\G}{\Gamma}
\tikzstyle{blackNode}=[fill=black, draw=black, shape=circle]
\title[Profinite rigidity of K\"ahler groups]{Profinite rigidity of K\"ahler groups: Riemann surfaces and subdirect products}
\author{Sam Hughes}
\address[S.~Hughes]{Rheinische Friedrich-Wilhelms-Universit\"at Bonn, Mathematical Institute, Endenicher Allee 60, 53115 Bonn, Germany}
\email{sam.hughes.maths@gmail.com}\email{hughes@math.uni-bonn.de}
\author{Claudio Llosa Isenrich}
\address[C.~Llosa Isenrich]{Faculty of Mathematics, Karlsruhe Institute of Technology, Englerstra\ss e 2, 76131 Karlsruhe, Germany}
\email{claudio.llosa@kit.edu}
\author{Pierre Py}
\address[P.~Py]{Institut Fourier, Universit\'e Grenoble Alpes \& CNRS, 38000 Grenoble, France}
\email{pierre.py@univ-grenoble-alpes.fr}
\author{Matthew Stover}
\address[M.~Stover]{Department of Mathematics, Temple University, Philadelphia, PA 19122, USA}
\email{mstover@temple.edu}
\author{Stefano Vidussi}
\address[S.~Vidussi]{Department of Mathematics, University of California, Riverside, CA 92521, USA}
\email{svidussi@ucr.edu}
\subjclass[2020]{}
\begin{document}

\begin{abstract}
    This paper establishes strong profinite rigidity results for K\"ahler groups, showing that certain groups are determined within the class of residually finite K\"ahler groups by their profinite completion.  Examples include products of surface groups and certain groups with exotic finiteness properties studied earlier by Dimca--Papadima--Suciu and Llosa Isenrich. Consequently, there are aspherical smooth projective varieties that are determined up to homeomorphism by their algebraic fundamental group. The main tool is the following: the holomorphic fibrations of a closed K\"ahler manifold over hyperbolic $2$-orbifolds can be recovered from the profinite completion of its fundamental group. We also prove profinite invariance of the BNS invariant.
\end{abstract}

\maketitle

\setcounter{tocdepth}{1}
\tableofcontents

\section{Introduction}

For a smooth complex projective variety $X$, its topological fundamental group will be denoted by $\pi_1(X)$ and $\pialg(X)$ will denote its algebraic fundamental group. Motivated by the seminal work of Grothendieck \cite{Grothendieck-1970}, this paper considers the following questions:

\begin{question}
    Given an aspherical smooth projective variety with $\pi_1(X)$ residually finite:
    \begin{itemize}

        \item To what extent does $\pialg(X)$ determine $X$ up to homeomorphism?

        \item What about $\pi_1(X)$ up to isomorphism?

    \end{itemize}
\end{question}

Varieties for which the above questions have positive answers must be very special.  Indeed, building on work of Serre \cite{Ser-64} that makes use of the action of $\mathrm{Aut}(\C)$, it is known that there exist arbitrarily large collections of aspherical smooth projective varieties that are pairwise not homotopy equivalent but have isomorphic algebraic fundamental groups. See \cite{BauCatGru-15,MilneSuh, Sto-19,Sto-22} for a plethora of examples. On the other hand, the main results of this paper show that a strong form of rigidity holds for direct products of hyperbolic Riemann surfaces.

The techniques in this paper apply not only to smooth projective varieties but also to compact K\"ahler manifolds, so our results are stated in this more general context. Moreover, since the algebraic fundamental group of a smooth complex projective variety is isomorphic to the profinite completion of its topological fundamental group, from here forward this paper will use the language of profinite completions instead of algebraic fundamental groups. The notation $\widehat{G}$ will denote the profinite completion of a group $G$.

Recall that a \emph{K\"ahler group} is a group that can be realised as the fundamental group of a compact K\"ahler manifold; see~\cite{abckt} and \cite{py-book} for the essential properties of this class of groups. The first main result of this paper is profinite rigidity of products of hyperbolic surface groups amongst residually finite K\"ahler groups.

\begin{thmx}\label{cor:strong-rigidity-of-direct-product}
    Let $X$ be a compact K\"ahler manifold with residually finite fundamental group.  Assume that
    \[
        \widehat{\pi_{1}(X)} \cong \prod_{i=1}^r\widehat{\pi_{1}(S_i)}
    \]
    for some $r \ge 1$, where each $S_i$ is a closed oriented surface of genus at least $2$. Then $\pi_1(X)$ is isomorphic to $\prod_{i=1}^r \pi_1(S_i)$. Moreover, if $X$ is aspherical then it is biholomorphic to $\prod_{i=1}^r S_i$ for a suitable choice of complex structure on each of the $S_{i}$.  
\end{thmx}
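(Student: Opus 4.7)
The plan is to produce, from the profinite isomorphism, $r$ holomorphic fibrations of $X$ onto hyperbolic Riemann surfaces, assemble them into a single holomorphic map to a product, and promote this map to the desired group isomorphism and, in the aspherical case, to a biholomorphism. First, for each $i$, composing the given profinite isomorphism with the $i$-th projection yields a continuous surjection $\widehat{\pi_1(X)} \twoheadrightarrow \widehat{\pi_1(S_i)}$. By the main tool of the paper, this surjection is induced by a holomorphic fibration $f_i \colon X \to \Sigma_i$ onto a hyperbolic $2$-orbifold. Since $\widehat{\pi_1(S_i)}$ is the profinite completion of a torsion-free surface group of genus at least two, and hyperbolic surface groups are profinitely distinguished from proper orbifold groups (work of Bridson--Reid--Wilton and its extensions), $\Sigma_i$ must in fact be a closed Riemann surface homeomorphic to $S_i$. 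Assembling the $f_i$'s yields a holomorphic map $f = (f_1, \ldots, f_r) \colon X \to \Sigma := \prod_{i=1}^r \Sigma_i$ whose induced map on profinite completions of fundamental groups recovers the given isomorphism.

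Because $\pi_1(X)$ is residually finite, it embeds into $\widehat{\pi_1(X)}$, and the factorization through $f_\ast \colon \pi_1(X) \to \pi_1(\Sigma)$ forces $f_\ast$ to be injective. Its image is a subdirect product of the $\pi_1(\Sigma_i)$ (each $f_i$ being a fibration, it surjects on $\pi_1$) whose profinite closure is all of $\widehat{\pi_1(\Sigma)}$. Combined with rigidity results for subdirect products of surface groups, this yields surjectivity of $f_\ast$, and hence the desired isomorphism $\pi_1(X) \cong \prod_i \pi_1(S_i)$.

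Now assume $X$ is aspherical, so that $X$ is a classifying space for $\pi := \prod_i \pi_1(S_i)$. Since $\pi$ has cohomological dimension $2r$, one has $\dim_\C X = r = \dim_\C \Sigma$, and $f$ is a holomorphic map between compact K\"ahler manifolds of the same complex dimension inducing an isomorphism on fundamental groups. Thus $f$ is a homotopy equivalence and in particular has topological degree $\pm 1$. Generic transversality of the fibers of the $f_i$'s forces $f$ to have $0$-dimensional generic fibers, so $f$ is finite and surjective; combined with the degree one condition, this upgrades $f$ to a biholomorphism. Endowing each $S_i$ with the complex structure of $\Sigma_i$ finishes the proof.

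The main obstacle I anticipate is the surjectivity of $f_\ast$: products of surface groups fail to be subgroup separable, so a finitely generated subgroup with dense profinite closure need not be the whole group. Overcoming this must exploit the additional subdirect product structure coming from the holomorphic fibrations, presumably via the theory of subgroups of products of surface groups in the spirit of Bridson--Howie--Miller--Short, or through the profinite invariance of the BNS invariant that the paper also establishes.
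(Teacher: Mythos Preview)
Your overall architecture is exactly the paper's: recover the fibrations from the profinite isomorphism via the universal homomorphism, assemble them into a holomorphic map to the product, and upgrade. Two points need sharpening, and in both cases the paper's fix is short.

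\textbf{Surjectivity of $f_\ast$.} You correctly flag this as the obstacle and you are right that products of surface groups are not LERF. The missing observation is that K\"ahler groups are finitely \emph{presented}, and since you have already shown $f_\ast$ is injective, its image is a finitely presented subgroup of $\prod_i \pi_1(\Sigma_i)$. Bridson--Wilton proved that finitely presented subgroups of finitely generated residually free groups are separable; since products of surface groups are residually free, the image of $f_\ast$ is closed in the profinite topology. Its closure in $\widehat{\prod_i \pi_1(\Sigma_i)}$ being everything then forces $f_\ast$ to be surjective. This is the paper's argument (\Cref{cor:Bridson-Wilton} and \Cref{cor:rigidity-of-direct-products}); no BNS machinery or BHMS structural theory beyond finite presentability is required.

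\textbf{The biholomorphism.} Your endgame is imprecise. The assertion that generic fibres of the $f_i$ are transverse is not justified and is not needed; more importantly, ``finite of degree one'' is not what you get. A degree-one holomorphic map between equidimensional compact manifolds need not be finite: it could contract a divisor. What the paper invokes is Siu's lemma (\Cref{lemma:siu}): a degree-one holomorphic map between closed K\"ahler manifolds of the same dimension that is injective on $H_{2n-2}(\,\cdot\,,\RR)$ is a biholomorphism. Since your $f$ is a homotopy equivalence, both hypotheses hold and you are done. (The paper routes $f$ through the Albanese to treat the more general case with a torus factor, but for $k=0$ this collapses to the direct map $f=(f_1,\ldots,f_r)$ you wrote down.)
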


Note that Toledo constructed examples of projective varieties with fundamental group that is not residually finite \cite{Tol-93}, so the hypothesis that $\pi_1(X)$ is residually finite is natural. It remains a major open problem to determine whether or not every infinite K\"ahler group admits a nontrivial finite quotient, but all known examples of infinite K\"ahler groups admit linear representations over $\mathbb{C}$ with infinite image and hence they have nontrivial profinite completion. Partial results still hold without the residual finiteness assumption; e.g., see \Cref{thm:profinite-univ-hom} below.

Interestingly, \Cref{cor:strong-rigidity-of-direct-product} is virtually false. Amongst the aforementioned examples building on those of Serre, Bauer--Catanese--Grunewald constructed varieties $X_1$, $X_2$, both quotients of a fixed product of curves by distinct free actions of a certain finite group, such that $X_1$ and $X_2$ are not homotopy equivalent but $\widehat{\pi_1(X_1)} \cong \widehat{\pi_1(X_2)}$ \cite[Thm.\ 7.9]{BauCatGru-15}. Quotients of this kind are said to be \emph{isogenous to a product}. Still, the natural bijection between the sets of finite index subgroups of residually finite groups with the same profinite completion \cite[Prop.\ 4.4]{Rei-15} and \Cref{cor:strong-rigidity-of-direct-product} immediately implies that their construction is the only way to produce examples isogenous to a product that are not homotopy equivalent but have the same algebraic fundamental group. To be precise:

\begin{corollary}\label{cor:BCGrigid}
    Let $Y$ be a product of finitely many hyperbolic Riemann surfaces, $F$ be a finite group acting freely and holomorphically on $Y$, and ${X_1 = Y /F}$. Suppose that $X_2$ is an aspherical compact K\"ahler manifold with $\pi_1(X_2)$ residually finite such that $\widehat{\pi_1(X_2)} \cong \widehat{\pi_1(X_1)}$. Then there is a deformation $Y^\prime$ of the complex structure on $Y$ so that $F$ still acts holomorphically and freely on $Y^\prime$ and $X_2$ is biholomorphic to $Y^\prime / F$. In particular, $X_2$ is also isogenous to a product and has the same associated finite group.
\end{corollary}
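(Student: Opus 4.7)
The plan is to apply \Cref{cor:strong-rigidity-of-direct-product} to a finite cover of $X_2$ and then descend. Since $F$ acts freely and holomorphically on $Y$, we have the short exact sequence
\[
    1 \to \pi_1(Y) \to \pi_1(X_1) \to F \to 1
\]
in which $\pi_1(Y) = \prod_{i=1}^{r} \pi_1(S_i)$ is a residually finite finite-index normal subgroup. Therefore $\pi_1(X_1)$ is itself residually finite, and profinite completion preserves exactness to yield
\[
    1 \to \widehat{\pi_1(Y)} \to \widehat{\pi_1(X_1)} \to F \to 1.
\]

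Next, I would use the bijection \cite[Prop.~4.4]{Rei-15} between finite-index subgroups of residually finite groups with isomorphic profinite completions. Applied to the given isomorphism $\widehat{\pi_1(X_1)} \cong \widehat{\pi_1(X_2)}$, the subgroup $\pi_1(Y)$ corresponds to a finite-index normal subgroup $H \trianglelefteq \pi_1(X_2)$ with $\widehat{H} \cong \widehat{\pi_1(Y)} \cong \prod_{i=1}^{r} \widehat{\pi_1(S_i)}$ and quotient $\pi_1(X_2)/H \cong F$. The subgroup $H$ classifies a finite Galois cover $\widetilde{X}_2 \to X_2$ with deck group $F$; since $X_2$ is compact K\"ahler, aspherical, and has residually finite fundamental group, $\widetilde{X}_2$ inherits all of these properties.

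Now \Cref{cor:strong-rigidity-of-direct-product} applies to $\widetilde{X}_2$ and furnishes a biholomorphism $\widetilde{X}_2 \cong \prod_{i=1}^{r} S_i'$, where each $S_i'$ is a complex structure on the same underlying smooth surface as $S_i$. Setting $Y' := \prod_{i=1}^{r} S_i'$, we obtain $X_2 \cong Y'/F$ with $F$ acting freely and holomorphically as the deck group, and $Y'$ is the desired deformation of the complex structure on $Y$.

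The main subtlety lies in the bookkeeping for the correspondence of subgroups: one must verify that normality is preserved under the bijection \cite[Prop.~4.4]{Rei-15} and that the induced identification $\pi_1(X_1)/\pi_1(Y) \cong \pi_1(X_2)/H$ canonically matches the original $F$ with the deck group of $\widetilde{X}_2 \to X_2$. Both are routine consequences of the exactness of profinite completion along short exact sequences with finite cokernel.
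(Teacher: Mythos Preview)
Your proposal is correct and follows precisely the approach the paper indicates: the paper does not give a detailed proof of this corollary but states that it ``immediately'' follows from \Cref{cor:strong-rigidity-of-direct-product} together with the bijection of finite-index subgroups from \cite[Prop.~4.4]{Rei-15}, which is exactly the argument you have written out. Your handling of the bookkeeping (preservation of normality and identification of the quotient with $F$) is accurate and is indeed a routine consequence of the correspondence between finite-index subgroups of a residually finite group and open subgroups of its profinite completion.
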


In the case where $X_1$ is a rigid variety it would be interesting to know whether or not $X_1$ and $X_2$ must be equivalent under the action of $\mathrm{Aut}(\C)$.

\medskip

Before stating further results, some classical definitions regarding the \emph{profinite genus} of a group are required. 

\subsection*{Genus of a K\"ahler group}

Recall that the \emph{profinite genus} $\mathcal{G}(G)$ of a finitely generated residually finite group $G$ is:
\[
    \mathcal{G}(G)=\left\{H \mbox{ finitely generated\ residually finite }\mid \widehat{H}\cong \widehat{G}\right\}/\cong
\]
In other words, $\mathcal{G}(G)$ is the set of isomorphism classes of finitely generated residually finite groups with profinite completion isomorphic to $\widehat{G}$. If $\calb$ is a class of residually finite groups, then $\calg_\calb(G)$ denotes $\calg(G)\cap \calb$. A finitely generated residually finite group $G$ is then \emph{profinitely rigid} (resp.\ $\mathcal{B}$-profinitely rigid) if $\mathcal{G}(G)=\left\{G\right\}$ (resp.\ $\mathcal{G}_\calb(G)=\left\{G\right\}$), and $G$ is \emph{almost profinitely rigid} (resp.\ almost $\mathcal{B}$-profinitely rigid) if $|\mathcal{G}(G)|<\infty$ (resp.\ $|\mathcal{G}_\calb(G)|<\infty$).

There are very few examples of groups that are profinitely rigid. A well-known open problem attributed to Remeslennikov is whether free groups and surface groups are profinitely rigid. A recent breakthrough is work of Bridson, McReynolds, Reid, and Spitler \cite{bmrs} that provides examples of profinitely rigid hyperbolic $3$-manifold and $3$-orbifold groups. More relevant to this paper, Bridson, McReynolds, Spitler, and Reid later proved that there are hyperbolic triangle groups that are profinitely rigid amongst all finitely generated and residually finite groups \cite{bmrs2}. See~\cite{bmrs,reid-icm,bourbak-remy} for history and further references on profinite rigidity.

This paper considers the question of $\mathcal{K}$-profinite rigidity, where $\calk$ is the class of residually finite K\"ahler groups, proving $\mathcal{K}$-profinite rigidity for many important examples. This includes the direct products appearing in \Cref{cor:strong-rigidity-of-direct-product} and certain K\"ahler subgroups of direct products of surface groups. The examples of Serre \cite{Ser-64}, Bauer--Catanese--Grunewald \cite{BauCatGru-15}, and Stover \cite{Sto-19,Sto-22} mentioned above show that the genus of a K\"ahler group can consist of more than one isomorphism class, so the strongest possible result for general K\"ahler groups is that the $\calk$-profinite genus is finite.

\medskip

This paper also studies the following notion, which was introduced by Grothendieck \cite{Grothendieck-1970}.

\begin{defn}\label{def:grorig}
    A group $H$ is \emph{Grothendieck rigid} if any homomorphism $u : K\to H$ from a finitely generated residually finite group $K$ to $H$ that induces an isomorphism on profinite completions is necessarily an isomorphism.
\end{defn}

Lastly, in this paper a hyperbolic orbisurface group will always mean a discrete \emph{cocompact} subgroup of ${\rm PSL}_{2}(\mathbb{R})$. Note that a lattice in $\PSL_2(\R)$ is a K\"ahler group if and only if it is cocompact, so the fact that an orbisurface group is always cocompact in this paper is no loss of generality. The rank of a finitely generated abelian group $H$ means the minimal number of generators of the quotient of $H$ by its torsion subgroup. Another main result of this paper is the following.

\begin{thmx}\label{thmx.intro.rigidGroups}
    Let $G$ be a K\"ahler group that is either: 
    \begin{itemize}
        \item a hyperbolic orbisurface group, 
        \item a direct product of a finitely generated abelian group of even rank with finitely many hyperbolic orbisurface groups, or
        \item the kernel of a factorwise surjective homomorphism from a product of hyperbolic orbisurface groups onto $\mathbb{Z}^{2}$.
    \end{itemize}   
    Then $G$ is profinitely rigid amongst residually finite K\"ahler groups, as well as Grothendieck rigid amongst residually finite K\"ahler groups.
\end{thmx}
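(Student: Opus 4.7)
The plan is to leverage the paper's key technical input---the recovery of holomorphic fibrations over hyperbolic $2$-orbifolds from the profinite completion---together with the profinite invariance of the BNS invariant (also established in the paper) and the classical profinite rigidity of Fuchsian groups within their own class. In each case I would construct an explicit homomorphism from the candidate K\"ahler group $H$ to $G$, identify its image via the main tool, and conclude injectivity from residual finiteness.

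For a hyperbolic orbisurface group $G$, let $H$ be a residually finite K\"ahler group with $\widehat{H}\cong\widehat{G}$. Applied to this profinite isomorphism, the main tool produces a holomorphic fibration from a K\"ahler manifold $X$ with $\pi_1(X)\cong H$ onto a hyperbolic $2$-orbifold $\Sigma$, inducing a surjection $\phi\colon H\twoheadrightarrow\pi_1(\Sigma)$ with $\widehat{\pi_1(\Sigma)}\cong\widehat{G}$. Profinite rigidity of Fuchsian groups within their class yields $\pi_1(\Sigma)\cong G$; since $\widehat{\phi}$ is an isomorphism and $H$ is residually finite, $\ker\phi=1$ and $\phi$ is an isomorphism. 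For Grothendieck rigidity, one applies the main tool to a given $u\colon K\to G$ inducing an iso on profinite completions, produces the fibration realisation, and compares with $u$ to conclude $u$ is an isomorphism up to automorphism of $G$.

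For the product case $G=A\times\prod_{i=1}^{r}\Gamma_i$, each projection $\widehat{G}\twoheadrightarrow\widehat{\Gamma_i}$ transports along the profinite isomorphism and, by the main tool combined with the orbisurface case, is realised by a surjection $H\twoheadrightarrow\Gamma_i$. These assemble into $\Phi\colon H\twoheadrightarrow\prod_i\Gamma_i$ whose kernel $N$ satisfies $\widehat{N}\cong\widehat{A}$. Profinite rigidity of finitely generated abelian groups identifies $N\cong A$; a centraliser argument using that $\widehat{A}$ is central in $\widehat{H}\cong\widehat{G}$ and residual finiteness places $N$ inside $Z(H)$; the even-rank hypothesis on $A$ is consistent with $b_1(H)$ being even, as required by K\"ahlerness. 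A splitting argument for the resulting central extension, using cohomological goodness of products of orbisurface groups to transfer the vanishing of the extension class from the profinite to the discrete setting, yields $H\cong G$.

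For the subdirect product case $G=\ker(\pi\colon\prod_{i=1}^{n}\Gamma_i\to\ZZ^2)$ with $\pi$ factorwise surjective, the profinite invariance of the BNS invariant recovers the character $\pi$ from $\widehat{G}$ up to $\Aut(\ZZ^2)$, and the main tool extracts the holomorphic fibrations of a K\"ahler manifold realising $H$ onto hyperbolic $2$-orbifolds---by the orbisurface case these have fundamental groups isomorphic to the $\Gamma_i$. Assembling yields $\Psi\colon H\to\prod_i\Gamma_i$, and the BNS data identifies $\Psi(H)$ as the kernel of a $\ZZ^2$-character profinitely equivalent to $\pi$, hence equal to $G$ after changing coordinates; residual finiteness of $H$ gives injectivity. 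The main obstacle lies in this last case: one must simultaneously pin down the $\ZZ^2$-quotient on the $H$-side and exclude alternative subdirect-product realisations (with different $n$, different $\Gamma_i$, or different characters) that are profinitely indistinguishable from $G$---this is precisely where the profinite invariance of the BNS invariant does the work, combined with the exotic finiteness properties constraining the possible K\"ahler subgroups of products of surface groups in the sense of Dimca--Papadima--Suciu and Llosa Isenrich.
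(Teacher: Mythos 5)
Your overall strategy for the first two bullets aligns with the paper's, which also proceeds by profinite invariance of the universal homomorphism, the centre argument identifying the abelian kernel, and cohomological goodness to split the resulting central extension (the paper's Theorems \ref{thm:profinite-univ-hom}, \ref{thm:rigid-products-tori}, and Corollary \ref{cor_WilZal_SES}). However, even in those cases you gloss over a nontrivial ingredient: to get that the map $H\to\prod_i\Gamma_i$ is injective and that its image equals the full product, the paper relies on Bridson--Wilton separability of finitely presented subgroups of residually free groups (Theorem \ref{thm:Bridson-Wilton}, Corollary \ref{cor:Bridson-Wilton}); ``residual finiteness of $H$'' alone does not give surjectivity of the projections nor the identification of the kernel with the centre. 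The same separability input underlies Grothendieck rigidity in all three cases (Lemma \ref{lem:Grothendieck-rigidity} and Corollary \ref{cor:grotrigi}); your sketch only treats the orbisurface case and does so vaguely.

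The genuine gap is in the co-$\mathbb{Z}^2$ case. You propose using profinite invariance of the BNS invariant (the paper's Theorem \ref{thmx.BNS.profinite}) to ``recover the character $\pi$ up to $\mathrm{Aut}(\mathbb{Z}^2)$'' and then to conclude that the candidate $H$ is ``equal to $G$ after changing coordinates.'' Neither step is justified. The BNS invariant of $G$ records only the subspaces $V_i = \varrho_i^*H^1(\Gamma_i,\mathbb{R})$ inside $H^1(G,\mathbb{R})$, not the $\mathbb{Z}^2$-valued character $f=\sum f_i$ on the ambient product; and even after one knows $H=\ker(g)$ for some $g=\sum g_i:\prod\Gamma_i\to\mathbb{Z}^2$ that is ``profinitely equivalent'' to $f$, the passage from profinite equivalence of the characters $f_i,g_i$ to an actual isomorphism of kernels is precisely the missing step. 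The paper fills it with two concrete ingredients you omit: a cup-product/goodness argument (the $H^{1+1}(-;\mathbb{Z}/n)$ diagram in the proof of Theorem \ref{prop:rigidity-co-z2}) showing that the degrees of the induced ramified covers $S_i\to$ torus agree for $f_i$ and $g_i$, and the Berstein--Edmonds theorem via Proposition \ref{prop:unique-degree-d-covering} and Corollary \ref{cor.OneRamifiedEpi}, which shows the isomorphism type of $\ker(\sum f_i)$ depends only on these degrees and the $\Gamma_i$. Without those, ``profinitely indistinguishable characters give the same kernel'' is an unsupported claim, and the BNS invariance theorem (which in fact the paper does not invoke in the proof of Theorem \ref{thmx.intro.rigidGroups}) is not a substitute.
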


K\"ahler groups of the form appearing in the third point of \Cref{thmx.intro.rigidGroups} do indeed exist. The first examples were constructed by Dimca, Papadima, and Suciu \cite{DimPapSuc-09-II}, then studied further and generalised by Llosa Isenrich \cite{Llo-19,Llo-20,Llo-24}. These groups are notable due to the fact that, despite being fundamental groups of closed K\"ahler manifolds and thus finitely presented, they lack higher finiteness properties: for each $n\ge 2$, there are examples that are of type $\mathsf{F}_n$ but not of type $\mathsf{F}_{n+1}$. Recall that a group $G$ is of type $\mathsf{F}_n$ if it admits a $K(G,1)$ with finite $n$-skeleton. In light of this degeneracy, the fact that these coabelian subgroups are profinitely rigid amongst K\"ahler groups seems interesting.

Using earlier classification results due to Llosa Isenrich~\cite{Llo-24}, one can use \Cref{thmx.intro.rigidGroups} to obtain the following:


\begin{duplicate}[\Cref{thmx:three}]

Let $\Gamma_{1}$, $\Gamma_{2}$, $\Gamma_{3}$ be three hyperbolic orbisurface groups and
\[
    G\leq \Gamma_{1}\times \Gamma_{2}\times \Gamma_{3}
\]
be a K\"ahler group. Then $G$ contains a finite index subgroup $H$ that is profinitely rigid amongst residually finite K\"ahler groups. 
\end{duplicate}

It is possible that our techniques could be pushed further to prove finiteness of the Kähler profinite genus for many Kähler subgroups of direct products of hyperbolic orbisurface groups.

\subsection*{Fibrations}

The main tool used to establish Theorems~\ref{cor:strong-rigidity-of-direct-product} and~\ref{thmx.intro.rigidGroups} is a profinite rigidity result for fibrings showing that holomorphic fibrations from a closed K\"ahler manifold $X$ onto hyperbolic orbisurfaces are detected by $\widehat{\pi_{1}(X)}$. Before stating this more precisely, some classical facts about fibrations of K\"ahler manifolds over Riemann surfaces are now recalled.

A closed orbi-Riemann surface (from here forward, \emph{orbisurface}) is a closed Riemann surface $S$ together with a finite set of points $P\subset S$ and a collection of natural numbers $(m_{p})_{p\in P}$ such that $m_{p}\ge 2$ for all $p\in P$. This orbifold is sometimes denoted $S^*$ or $(S,P)$, but more often the symbol $S$ is used both for the orbifold and for the underlying (unmarked) Riemann surface. Then $S$ is \emph{hyperbolic} if its orbifold Euler characteristic 
\begin{equation}\label{eq:oreuch}
    \chi^{orb} (S,P)=\chi (S)-\sum_{p\in P}\left(1-\frac{1}{m_{p}}\right)
\end{equation}
is negative.

\begin{defn}
    A \emph{fibration} from a closed complex manifold $X$ onto a Riemann surface $S$ is a surjective holomorphic map with connected fibres.
\end{defn}

\begin{remark}
    When $S$ has positive genus, fibrations also appear under the name \emph{irrational pencils} in the literature.    
\end{remark}

If $X$ is a closed complex manifold and $f : X\to S$ a fibration onto a closed Riemann surface, one can define an orbifold structure $S_{f}^{\ast}=(S,P)$ on $S$ using the multiplicites of singular fibres of $f$ in such a way that the following properties hold:
\begin{enumerate}

    \item The map $f : X \to S_{f}^{\ast}$ is holomorphic in the orbifold sense. 

    \item The natural morphism $\pi_{1}(X) \to \pi_{1}(S)$ can be lifted to a morphism 
    \[
        f_{\ast}: \pi_{1}(X)\longrightarrow \pi_{1}^{orb}(S_{f}^{\ast})
    \]
    with finitely generated kernel, which is called the morphism \emph{induced by $f$}.

    \item Up to isomorphism of the base, there are only finitely many fibrations $f : X\to S$ for which the induced orbifold $S_{f}^{\ast}$ is hyperbolic.

    \item When $X$ is K\"ahler, the morphisms induced by fibrations to hyperbolic orbisurfaces depend only on $\pi_{1}(X)$ and not on the specific K\"ahler manifold $X$. 
\end{enumerate}
See \Cref{sec.prelims} for the definition of the group $\pi_{1}^{orb}(S_{f}^{\ast})$. The reader should consult~\cite{Del-08}, \cite{CorSim-08}, or~\cite[Ch.\ 2]{py-book} for the definitions of a holomorphic map to an orbisurface and of the orbifold $S_{f}^{\ast}$, along with proofs of the above results.

Consequently, if $G$ is a K\"ahler group then there is a canonical finite collection $\Gamma_1, \ldots , \Gamma_r$ of hyperbolic orbisurface groups equipped with surjective homomorphisms $\varrho_i : G\to \Gamma_i$ ($1\le i \le r$) each induced by a holomorphic fibration $f$ with $S_{f}^{\ast}$ hyperbolic. In particular, each $\varrho_i$ has finitely generated kernel.

\begin{defn}\label{def:univhom}
    The homomorphism $ (\varrho_1,\dots , \varrho_r) : G \to \Gamma_1 \times \dots \times \Gamma_r$ defined in the previous paragraph is called the \emph{universal homomorphism} for $G$.    
\end{defn}

The universal homomorphism was introduced to rephrase the results of Corlette--Simpson \cite{CorSim-08} and Delzant \cite{Del-16} in terms of a homomorphism to a direct product in \cite[Thm.\ 9.1]{Llo-20}. \Cref{sec.prelims} will precisely describe the sense in which this homomorphism is universal. The universal homomorphism played a fundamental role in many recent advances in the study of K\"ahler groups (e.g., \cite{Llo-20,Llo-24}), and the next result shows that it is a profinite invariant amongst K\"ahler groups.

\begin{duplicate}[\Cref{thm:profinite-univ-hom}]
     Let $G$ and $H$ be K\"ahler groups,
     \begin{align*}
        \phi&=(\phi_1,\dots,\phi_r):G\longrightarrow \Gamma_1\times \dots\times \Gamma_r \\
        \psi &=(\psi_1,\dots ,\psi_s): H\longrightarrow \Sigma_1\times \dots\times \Sigma_s
     \end{align*}
     be their universal homomorphisms, and $\widehat{\phi}=(\widehat{\phi}_1,\dots,\widehat{\phi}_r), \widehat{\psi}=(\widehat{\psi}_1,\dots,\widehat{\psi}_s)$ be the induced homomorphisms on profinite completions. If there is an isomorphism $\Theta: \widehat{G}\to \widehat{H}$, then $r=s$ and, up to reordering factors, $\Gamma_i$ is isomorphic to $\Sigma_i$ for each $i\in \{1,\ldots , r\}$. Moreover, $\widehat{\psi}\circ\Theta=\widehat{\phi}$ after identifying the groups $\widehat{\Gamma}_i$ and $\widehat{\Sigma}_i$ through suitable isomorphisms of profinite orbisurface groups. In particular $\im(\widehat{\psi})\cong \im(\widehat{\phi})$.
\end{duplicate}

Note that this result has no residual finiteness hypothesis. The expression \emph{profinite orbisurface group} is used as shorthand for \emph{profinite completion of an orbisurface group}. The proof of \Cref{thm:profinite-univ-hom} will use the fact that hyperbolic orbisurface groups with the same profinite completion are isomorphic, a result due to Bridson, Conder and Reid~\cite{BriConRei-16}. \Cref{thm:profinite-univ-hom} should be contrasted with the long-standing open question of Delzant--Gromov \cite{DelzantGromov05} asking for a characterisation of the possible images of universal homomorphisms of K\"ahler groups; this question was resolved by the second author in the case of a product of three orbisurface groups \cite{Llo-24}.

\subsection*{The BNS invariant}

By work of Delzant \cite{Del-BNS}, the universal homomorphism of a K\"ahler group $G$ is closely related to its Bieri--Neumann--Strebel invariant $\Sigma^1(G)$ (or BNS-invariant for short). See~\cite{bieneustr} for the original article on $\Sigma^1(G)$, as well as~\cite[Ch.\ 11]{py-book} for an introduction. If $\Gamma$ is a finitely generated group, its BNS-invariant is a subset 
\[
    \Sigma^{1}(\Gamma)\subseteq H^{1}(\Gamma,\mathbb{R})-\{0\}
\]
which is invariant by multiplication by positive scalars. It is often viewed as a subset of the sphere
\[
    \left(H^{1}(\Gamma,\mathbb{R})-\{0\}\right)\!/\mathbb{R}_{+}^{\ast},
\]
but this paper considers it as a subset of $H^{1}(\Gamma,\mathbb{R})-\{0\}$.

The BNS-invariant is famously related to finite generation of kernels of homomorphisms to infinite abelian groups. Now, consider a K\"ahler group $G$ with universal homomorphism
\[
    (\varrho_1, \ldots , \varrho_r) : G\longrightarrow \Gamma_1 \times \cdots \times \Gamma_r
\]
with each $\Gamma_i$ a hyperbolic orbisurface group. The morphism $\varrho_i$ induces an injection $H^{1}(\Gamma_i,\mathbb{R})\hookrightarrow H^{1}(G,\mathbb{R})$ whose image is denoted by $V_i$. Delzant proved that the BNS invariant $\Sigma^{1}(G)$ coincides with the complement of the union 
\[
    \bigcup_{i=1}^{r}V_{i}
\]
of the $V_i$. Also note that if $G$ and $H$ are groups with isomorphic profinite completions, then there is a linear isomorphism $L : H^{1}(G,\QQ) \to H^{1}(H,\QQ)$. Let $L_{\RR}$ denote the $\RR$-linear extension $H^{1}(G,\RR) \to H^{1}(H,\RR)$ of $L$. One can then ask:

\begin{question}\label{question:bns}
    Can $L$ be chosen compatibly with an isomorphism of profinite completions so that $L_{\RR}(\Sigma^{1}(G))=\Sigma^{1}(H)$?
\end{question}

According to \Cref{thm:profinite-univ-hom}, the respective complements of the BNS invariants of $G$ and $H$ in $H^1(G, \R)$ and $H^1(H, \R)$ consist of the same number of linear subspaces, and the set of dimensions of these subspaces are the same for $G$ and $H$. Note that it is possible to have a triple $V_1$, $V_2$, $V_3$ such that $V_3\cap (V_1\oplus V_2) \neq \left\{0\right\}$, as shown by the examples built in~\cite{DimPapSuc-09-II}, so it is not obvious that an isomorphism on first cohomology must respect BNS invariants. However, the answer to \Cref{question:bns} is indeed affirmative for K\"ahler groups.

\begin{duplicate}[\Cref{thmx.BNS.profinite}]
    Let $G$ and $H$ be K\"ahler groups such that $\widehat{G}\cong\widehat{H}$.  Then there is a linear isomorphism $L : H^{1}(G,\QQ) \to H^{1}(H,\QQ)$ such that
    \[
        L_{\RR}(\Sigma^1(G))=\Sigma^1(H).
    \]
\end{duplicate}

The proof of \Cref{thmx.BNS.profinite} relies on work of Hughes and Kielak~\cite{HughesKielak2022}, which itself was inspired by earlier work on $3$-manifold groups by Liu~\cite{Liu23} and Jaikin-Zapirain~\cite{JZ20}. We shall not give a detailed survey of the relevant prior work here, but simply mention two facts: 
\begin{itemize}
    \item whether or not a closed $3$-manifold $M$ fibres over the circle can be detected on the profinite completion of $\pi_1(M)$ \cite{JZ20};
    \item \Cref{thmx.BNS.profinite} is, in a sense, a K\"ahler analogue to \cite[Thm.\ 1.3]{Liu23}.
\end{itemize}
\subsection*{Outline}

\Cref{sec.prelims} recalls necessary background regarding hyperbolic orbisurface groups, subgroups of direct products, and the universal homomorphism. \Cref{sec.unihom} establishes profinite invariance of the universal homomorphism (\Cref{thm:profinite-univ-hom}). Using~\Cref{thm:profinite-univ-hom}, \Cref{sec.rigidity.subdirect,sec.rigidity.tori} prove $\calk$-profinite rigidity of products of hyperbolic orbisurface groups (possibly with a free abelian factor) and of the K\"ahler coabelian subgroups appearing in \Cref{thmx.intro.rigidGroups}. This establishes part of \Cref{cor:strong-rigidity-of-direct-product} by obtaining an isomorphism on fundamental groups and proves \Cref{thmx.intro.rigidGroups}. These sections also prove that certain K\"ahler groups commensurable with groups in these classes have finite $\calk$-genus and are Grothendieck rigid amongst residually finite K\"ahler groups. \Cref{thmx:three} is also established at the end of \Cref{sec.rigidity.tori}. \Cref{sec.homeo} concludes the proof of~\Cref{cor:strong-rigidity-of-direct-product} by producing the desired biholomorphism in the aspherical setting. \Cref{sec.bns} proves profinite invariance of the BNS invariant for K\"ahler groups (\Cref{thmx.BNS.profinite}). Finally, \Cref{sec.quests} contains a few additional remarks and questions regarding Kodaira fibrations.

\subsection*{Acknowledgements}
The authors foremost express their great appreciation for Ryan Spitler's participation in the early phases of this project. They also thank Henry Wilton for conversations related to \Cref{subsec:cepg}. Hughes is grateful for the warm hospitality he received during a visit to Karlsruhe Institute of Technology and received funding from the Royal Society (grant no. IES$\backslash$R2$\backslash$222157), from the European Research Council (grant  no. 850930), and was supported by a Humboldt Research Fellowship at Universit\"at Bonn. Llosa Isenrich was partially supported by the DFG projects 281869850 and 541703614. Py is partially supported by the project GAG ANR-24-CE40-3526-01. Stover was partially supported by Grant Number DMS-2203555 from the National Science Foundation. The first four authors thank the Heilbronn Institute for a grant that facilitated this research.

\section{Preliminaries}\label{sec.prelims}

This section collects results required later in the paper. Specifically, it describes results concerning ramified covers of surfaces (\Cref{subsec:orraco}), subgroups of direct products (\Cref{subsec:vsp}), and the universal homomorphism for K\"ahler groups (\Cref{subsec:univho}).

\subsection{Orbisurfaces and ramified covers}\label{subsec:orraco}

Beyond standard material on orbifolds and ramified covers, this subsection also establishes notation and describes a finiteness result for certain ramified covers that is needed later in the paper.

\medskip

Suppose that $S$ is a closed oriented surface and $P\subset S$ is a finite set of points together with integer multiplicities $(m_{p})_{p\in P}$, where $m_p \ge 2$ for each $p$. Let $S^{\ast}$ be the orbifold associated with the data consisting of the set $P$ and the multiplicities $(m_{p})_{p\in P}$ and let $\gamma_{p}$ denote a small loop in $S-P$ enclosing $p$ once. The loop $\gamma_p$ defines a conjugacy class in the group $\pi_{1}(S-P)$, and the orbifold fundamental group of $S^{\ast}$ is the quotient
\[
    \pi_{1}^{orb}(S^{\ast})=\pi_{1}(S-P)/\llangle \gamma_{p}^{m_{p}}=1, p\in P\rrangle.
\]
When $S^{\ast}$ is hyperbolic, i.e., when its orbifold Euler characteristic \eqref{eq:oreuch} is negative, the group $\pi_{1}^{orb}(S^{\ast})$ can be realised as a cocompact discrete subgroup of ${\rm PSL}_{2}(\RR)$. Conversely, any cocompact discrete subgroup of ${\rm PSL}_{2}(\RR)$ can be realised as the orbifold fundamental group of a hyperbolic orbifold. When the names of the orbifold points will only clutter notation, $S_{g,\underline{n}}$ will denote the orbifold whose underlying surface has genus $g\geq 0$ and $k\geq 0$ marked points of order $n_i\geq 2$ for $\underline{n}=(n_1,\dots, n_k)\in \mathbb{N}^k$. 

\medskip

The following finiteness result for ramified covers between topological surfaces is required in \Cref{sec.rigidity.subdirect}. 

\begin{prop}\label{cor:fin-many-isom-classes-of-ramified-coverings}
    Let $S_1$ be a closed oriented surface of genus one. For every $g, d\geq 2$, there are only finitely many isomorphism classes of ramified covers $f:S_g\to S_1$ of degree at most $d$ by a closed oriented surface $S_g$ of genus $g$.
\end{prop}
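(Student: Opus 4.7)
I would combine the Riemann--Hurwitz formula, which bounds the complexity of the branch locus, with the classical correspondence between finite ramified covers and finite-index subgroups of fundamental groups of punctured surfaces.

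For any degree $n \le d$ ramified cover $f \colon S_g \to S_1$, the Riemann--Hurwitz formula and $\chi(S_1) = 0$ yield
$$
    2g - 2 \;=\; \sum_{p \in S_g}(e_p - 1),
$$
where $e_p$ is the ramification index of $f$ at $p$. Each ramification point contributes at least $1$ to the sum, so the branch locus $B \subset S_1$ has cardinality at most $2g-2$. Moreover, the group of orientation-preserving self-homeomorphisms of $S_1$ acts transitively on unordered $k$-tuples of distinct points for every $k \ge 0$, so up to the natural notion of isomorphism of ramified covers (which permits a self-homeomorphism of the base) the branch locus can be assumed to equal a fixed $k$-point subset $B_k \subset S_1$ for some $k \in \{0, 1, \dots, 2g-2\}$.

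For each such $k$ and each $n \le d$, the restriction of $f$ to the complement of the ramification locus gives an unramified degree $n$ cover of $S_1 \setminus B_k$, and conversely any such unramified cover extends uniquely to a ramified cover of $S_1$ by filling in the punctures via the standard disc-model near each branch point. Isomorphism classes of connected degree $n$ unramified covers of $S_1 \setminus B_k$ are in bijection with conjugacy classes of index $n$ subgroups of $\pi_1(S_1 \setminus B_k)$. The latter group is finitely generated (free of rank $k+1$ when $k \ge 1$, and $\ZZ^2$ when $k=0$) and therefore contains only finitely many subgroups of each given finite index. Taking the union over the finite set of pairs $(k,n)$ with $k \le 2g-2$ and $n \le d$ produces finitely many ramified covers in total; those with total space a closed oriented surface of genus exactly $g$ form a subset, and the result follows.

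The main subtlety lies in agreeing on what is meant by an isomorphism of ramified covers. The statement is naturally read as identifying covers that differ by a self-homeomorphism of the base $S_1$, since otherwise the branch locus varies in a positive-dimensional family and the statement would be false as written. Once this convention is in place, the argument is formal and reduces to the classical enumeration of finite-index subgroups in a finitely generated group; the only remaining verification is that the Galois correspondence combined with the canonical extension across punctures gives connected closed oriented surfaces as total spaces, which is standard.
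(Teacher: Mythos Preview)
Your proof is correct and follows essentially the same approach as the paper: bound the number of branch points via Riemann--Hurwitz, fix the branch locus up to a self-homeomorphism of the base, and then invoke the finiteness of finite-index subgroups in the finitely generated fundamental group of the punctured torus. Your version is, if anything, slightly more careful than the paper's in that you make explicit the transitivity on $k$-point configurations and flag the convention that isomorphisms of ramified covers allow a self-homeomorphism of the base.
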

\begin{proof}
    Let $f : S_g \to S_1$ be as in the statement of the proposition, $\{p_{1}, \ldots , p_{k}\}$ be the ramification points of $f$, and $\{m_{1}, \ldots , m_{k}\}$ be the corresponding multiplicities. The Riemann-Hurwitz formula yields the equation
    \[
        2g-2 = \sum_{i=1}^k (m_i-1),
    \]
    hence $1\leq k\leq 2g-2$. If $\left\{q_1,\dots, q_\ell\right\}=f(\left\{p_1,\dots,p_k\right\})$ is the collection of branch points, then $\ell \le 2g-2$. The surface $S_g$ and the map $f$ are completely determined by the covering space
    \[
        S_g -f^{-1}(\{q_1,\dots, q_\ell\})\longrightarrow S_1 -\{q_1,\dots, q_\ell\}    
    \]
    induced by $f$. Each such covering space is associated with a conjugacy class of index $d$ subgroups of the free group $\pi_1(S_1 -\{q_1,\dots, q_\ell\})$ of rank $\ell + 1$. There are finitely many such subgroups for each $\ell$ and finitely many $\ell$, hence finitely many coverings. This proves the result.
\end{proof}

\subsection{Direct products of groups and their subgroups}\label{subsec:vsp}

This section presents a characterisation of finitely presented subgroups of direct products due to Bridson, Howie, Miller, and Short \cite{BHMS-13}. Before stating it, some notation is needed.

Consider a direct product $G=G_1\times \cdots\times G_r$ of finitely many groups. For any choice of indices $1\leq i_1<\dots<i_k\leq r$,
\[
    p_{i_1,\dots,i_k}:G\longrightarrow G_{i_1}\times \dots\times G_{i_k}
\]
denotes the appropriate projection and $G_i\leq G$ denotes the canonical embedding of $G_i$ in $G$ as the $i^{th}$ direct factor. A subgroup $H\leq G$
\begin{itemize}
    \item is \emph{subdirect} if $p_{i}(H)=G_i$ for each $1\leq i\leq r$;
    \item is \emph{full} if $(G_i\cap H)\neq \left\{1\right\}$ for all $1\leq i\leq r$; and
    \item has the virtual surjection to pairs (\emph{VSP}) property if $p_{i,j}(H)\leq G_{i}\times G_j$ has finite index in $G_i \times G_j$ for each $1\leq i < j \leq r$.
\end{itemize}
Given an arbitrary nontrivial subgroup $H\leq G$, after possibly reordering factors, there is always an embedding of $H$ as a full subdirect product ${H\leq G^\prime_1\times \cdots \times G^\prime_s}$ in a direct product of subgroups $G^\prime_{i}\leq G_i$, $1\leq i \leq s\leq r$. To see this, if there is a single factor that intersects $H$ trivially, first project away from that factor, replace $H$ with its isomorphic image under the projection, and replace the remaining factors by the projections $G_i^\prime \coloneq p_i(H)$. Repeating this process eventually yields an embedding of $H$ as a full subdirect product. 

In \cite{BHMS-13}, Bridson, Howie, Miller, and Short carried out a detailed analysis of the properties of finitely presented residually free groups. In the course of this study, they proved the following general result about subgroups of direct products. Note that the only hypothesis on each factor is that it is finitely presented. 

\begin{thm}\label{thm:SubdirProds} 
    Let $\Gamma_1, \ldots , \Gamma_r$ be finitely presented groups and
    \[
        H\leq \Gamma_1\times \cdots \times \Gamma_r
    \]
    be a full subdirect product. If $H$ has the VSP property, then it is finitely presented.
\end{thm}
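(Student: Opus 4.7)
The natural strategy is induction on the number $r$ of direct factors. The base cases are easy: for $r=1$ we have $H=\Gamma_{1}$ by subdirectness, and for $r=2$ the VSP property says that $H=p_{1,2}(H)$ has finite index in the finitely presented group $\Gamma_{1}\times \Gamma_{2}$, so $H$ is finitely presented.

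For the inductive step with $r\geq 3$, I would set $P=p_{1,\dots,r-1}(H)\leq \Gamma_{1}\times \cdots\times \Gamma_{r-1}$ and study the short exact sequence
\[
    1\longrightarrow N\longrightarrow H\longrightarrow P\longrightarrow 1,
\]
where $N=H\cap(\{1\}^{r-1}\times \Gamma_{r})$ is viewed as a subgroup of $\Gamma_{r}$. The image $P$ is clearly subdirect in $\Gamma_{1}\times \cdots \times \Gamma_{r-1}$, and its pair projections agree with those of $H$ and therefore have finite index, so $P$ inherits VSP. After passing to the smallest sub-product containing $P$ as a full subdirect product (discarding factors in which $P$ intersects trivially and replacing the remaining $\Gamma_{i}$ by the full projections of $P$), the inductive hypothesis applies and yields that $P$ is finitely presented. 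To show $N$ is finitely generated one uses that, by VSP applied to the pair $(i,r)$ for each $i<r$, the subgroup $p_{i,r}(H)$ has finite index (hence is finitely generated) in $\Gamma_{i}\times \Gamma_{r}$; a Reidemeister--Schreier style analysis then extracts a finite generating set for $N$ from lifts of generators of the groups $p_{i,r}(H)$, using that $N$ is normal in $\Gamma_{r}$ because $H$ is subdirect.

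The main obstacle is then the final step: deducing that $H$ itself is finitely presented from the sequence above. In general, a short exact sequence $1\to N\to H\to P\to 1$ with $N$ finitely generated and $P$ finitely presented does \emph{not} force $H$ to be finitely presented. The cleanest tool is the 1-2-3 Theorem of Baumslag--Bridson--Miller--Short, which supplies exactly this implication provided $P$ has type $\mathsf{F}_{3}$. To secure this, one would strengthen the inductive hypothesis to include higher finiteness, proving simultaneously that a VSP-type hypothesis on $n$-tuples yields type $\mathsf{F}_{n}$; this is the viewpoint underlying the general framework of \cite{BHMS-13}. Alternatively one can bypass higher finiteness altogether by writing down a finite presentation of $H$ directly from lifts of a presentation of $P$, together with the finite generating set of $N$ and additional relations harvested from the finitely presented groups $p_{i,j}(H)$ supplied by VSP; the bookkeeping required to show that these indeed present $H$ is the most delicate part of the argument.
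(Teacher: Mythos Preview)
The paper does not prove this theorem; it is stated with attribution to Bridson--Howie--Miller--Short \cite{BHMS-13}, and no argument is given here. There is therefore no in-paper proof to compare your attempt against.

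As for your sketch itself: it correctly isolates the inductive set-up and honestly flags the crux, namely that the extension $1\to N\to H\to P\to 1$ with $N$ finitely generated and $P$ finitely presented does not by itself force $H$ to be finitely presented, and that the $1$--$2$--$3$ Theorem would require the stronger input $P\in\mathsf{F}_3$. Your second alternative --- assembling an explicit finite presentation of $H$ from presentations of the finite-index pair-projections $p_{i,j}(H)$ --- is essentially the route taken in \cite{BHMS-13}, but what you call ``bookkeeping'' is the entire substance of that argument and is genuinely delicate. Two minor points: $P$ is in fact automatically full (any nontrivial $h\in H\cap\Gamma_i$ with $i<r$ already has trivial $r$-th coordinate, so $h\in P\cap\Gamma_i$), making the reduction to a smaller sub-product unnecessary; and your justification for finite generation of $N$ is too vague as written --- normality of $N$ in $\Gamma_r$ together with finite generation of each $p_{i,r}(H)$ does not by itself hand you a finite generating set for $N$.
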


There is a converse to \Cref{thm:SubdirProds} when the groups $\Gamma_i$ are residually free. Since it is not used in this paper, the reader is referred to~\cite{BHMS-13} for a precise statement. Finally, the following result about subgroups of direct products of orbisurface groups, whose key argument follows almost verbatim from \cite[Prop.\ 6.12]{BHMS-13}, is required later. The proof is included for completeness.

\begin{thm}\label{thm:SubdirProds2}
    If $H\leq \Gamma_1\times \cdots \times \Gamma_r$ and $H^\prime\leq \Gamma^\prime_1\times \cdots \times \Gamma^\prime_s$ are isomorphic full subdirect products of hyperbolic orbisurface groups, then $r=s$ and, up to permuting factors, there are isomorphisms $\Gamma \cong \Gamma^\prime_i$ that induce an isomorphism $H\cong H^\prime$ of subgroups.
\end{thm}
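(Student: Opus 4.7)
The plan is to identify intrinsically, inside $H$, the normal subgroups corresponding to the direct-factor intersections $L_i = H \cap \Gamma_i$ and to the complementary kernels $K_i = H \cap \prod_{j\neq i}\Gamma_j$. Three observations are immediate: $L_i \triangleleft H$ is non-trivial (by fullness), $[L_i,L_j]=1$ for $i\neq j$, and $H/K_i \cong \Gamma_i$ (since $p_i|_H$ is surjective by subdirectness). Analogous data will be attached to $H'$.

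The crucial algebraic ingredient to establish first is the classical fact that in a hyperbolic orbisurface group $\Gamma$, the centralizer of any non-trivial normal subgroup is trivial. For this, one observes that a finite normal subgroup would fix a unique point of the hyperbolic plane which $\Gamma$ would also have to fix, contradicting non-elementarity; hence every non-trivial normal subgroup contains a hyperbolic element $n$, and any element commuting with every conjugate $hnh^{-1}$ would have to preserve every conjugate axis of $n$, which is impossible unless the element is trivial. Two consequences will then follow formally: (i) $K_i = C_H(L_i)$; and (ii) if $N,N' \triangleleft H$ are non-trivial and $[N,N']=1$, then for each $j$ one of $p_j(N),p_j(N')$ is trivial, else $\Gamma_j$ would harbour two commuting non-trivial normal subgroups.

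Given an isomorphism $\phi\colon H \to H'$, I would next apply (ii) to the pairwise commuting family $\phi(L_1),\ldots,\phi(L_r) \triangleleft H'$. Setting $J_i = \{ j : p'_j(\phi(L_i)) \neq 1 \}$, (ii) forces the $J_i$ to be pairwise disjoint subsets of $\{1,\ldots,s\}$, while non-triviality of each $\phi(L_i)$ forces each $J_i$ to be non-empty. Counting gives $r \leq s$; the symmetric argument applied to $\phi^{-1}$ yields $r = s$ and $J_i = \{\sigma(i)\}$ for some permutation $\sigma$. Therefore $\phi(L_i) \subseteq L'_{\sigma(i)}$ and, by symmetry, $\phi(L_i) = L'_{\sigma(i)}$. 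Combining with (i),
\[
\phi(K_i) = C_{H'}(\phi(L_i)) = C_{H'}(L'_{\sigma(i)}) = K'_{\sigma(i)},
\]
so $\phi$ descends to isomorphisms $\bar\phi_i\colon \Gamma_i = H/K_i \to H'/K'_{\sigma(i)} = \Gamma'_{\sigma(i)}$. The naturality relations $\bar\phi_i \circ p_i = p'_{\sigma(i)} \circ \phi$ then express that $\phi$ is the restriction of $\prod_i \bar\phi_i$ to $H$, which is exactly the compatibility demanded by the theorem.

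The degenerate case $r=1$ (so $H=\Gamma_1$) must be handled separately: the centralizer lemma rules out two commuting non-trivial normal subgroups in $H \cong H'$, so fullness of the embedding of $H'$ in $\Gamma'_1 \times \cdots \times \Gamma'_s$ forces $s=1$. The main obstacle I foresee is the centralizer lemma for hyperbolic orbisurface groups; once this is secured, everything else is combinatorial bookkeeping with disjoint non-empty subsets of $\{1,\ldots,s\}$.
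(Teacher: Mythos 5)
Your proposal is correct but proceeds along a genuinely different route from the paper's. The paper first proves $r=s$ by producing a free abelian $\mathbb{Z}^r \leq H$ from the pairwise commuting normal subgroups $H\cap\Gamma_i$ and invoking the maximal rank of a free abelian subgroup of a product of Fuchsian groups; it then reduces the main assertion to the claim that \emph{every} surjection from $H$ onto a hyperbolic orbisurface group factors through one of the coordinate projections (the argument of \cite[Prop.\ 6.12]{BHMS-13}), and finishes by a diagram chase through an arbitrary isomorphism $H\to H'$. You instead intrinsically characterize the projection kernels $K_i = H\cap\prod_{j\neq i}\Gamma_j$ as centralizers $C_H(L_i)$ of the nontrivial normal subgroups $L_i = H\cap\Gamma_i$, then run a clean counting argument: the sets $J_i$ of factor indices receiving a nontrivial image of $\phi(L_i)$ are pairwise disjoint and nonempty, which yields $r=s$ and a permutation $\sigma$ with $\phi(L_i)=L'_{\sigma(i)}$ and hence $\phi(K_i)=K'_{\sigma(i)}$ by taking centralizers; the induced maps on quotients $H/K_i\cong\Gamma_i$ then assemble to the required factor-preserving isomorphism. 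Both proofs hinge on the same algebraic fact about cocompact Fuchsian groups (a nontrivial normal subgroup has trivial centralizer, equivalently no two nontrivial commuting normal subgroups exist), but you use it to describe $K_i$ intrinsically while the paper uses it inside the target of an auxiliary surjection. Your route avoids the paper's factorisation claim entirely, at the cost of proving and exploiting the centralizer identity $K_i = C_H(L_i)$; I would call it slightly more self-contained and conceptually cleaner, though the paper's version has the advantage of isolating a reusable factorisation lemma. Your separate treatment of the degenerate case $r=1$ is appropriate, since there the family $\{\phi(L_i)\}$ has a single member and observation (ii) does not apply directly.
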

\begin{proof}
    To see that $r=s$, first note that $H_i = H \cap \Gamma_i$ is a nontrivial normal subgroup of $H$, since $H$ is full. Since $H$ is subdirect,
    \[
        p_i(H_i)\cong H_i
    \]
    is a nontrivial normal subgroup of the hyperbolic orbisurface group $\Gamma_i$. Standard facts about Fuchsian groups then imply that $p_i(H_i)$ contains an element of infinite order, hence $H_i$ also does. The $H_i$ commute with one another, thus $H$ contains a free abelian subgroup isomorphic to $\ZZ^r$. Maximal abelian subgroups of hyperbolic orbisurface groups are cyclic, which implies that $r$ is the maximal possible rank of a free abelian subgroup of $\Gamma_1 \times \cdots \times \Gamma_r$, hence of $H$. The same argument holds for $H'$, and thus $r = s$.

    To complete the proof of the theorem, it suffices to prove that any surjective homomorphism from $H$ (resp.\ $H^\prime$) to a hyperbolic orbisurface group factors through one of the projections $H\to \Gamma_i$ (resp.\ $H^\prime\to \Gamma^\prime_i$). To see that this claim suffices to prove the theorem, fix an isomorphism $\psi : H \to H^\prime$ and consider the epimorphism $\varrho^\prime_j \circ \psi : H \to \Gamma^\prime_j$. We obtain a factorisation of $\varrho_i^\prime \circ \psi$ through some  $\varrho_i : H \to \Gamma_i$. Applying the same logic to the epimorphism $\varrho_i \circ \psi^{-1} : H^\prime \to \Gamma_i$ yields an epimorphism $\Gamma^\prime_k \to \Gamma_i$ and leads to a commutative diagram   
    \begin{equation}\label{isofspty}
        \begin{tikzcd}
            H^\prime \arrow[r]\arrow[d, "\varrho^\prime_{k}" left] & H \arrow[d] \arrow[r] & H^\prime \ar[d, "\varrho^\prime_j"] \\
            \Gamma^\prime_k \arrow[r] & \Gamma_i \arrow[r] & \Gamma^\prime_j
        \end{tikzcd}
    \end{equation}
    in which the isomorphisms in the top row compose to give the identity map on $H^\prime$. Since $H^\prime$ is full, $H^\prime_j = H^\prime \cap \Gamma^\prime_j \leq H^\prime$ is nontrivial, and it is mapped to the nontrivial subgroup $p^\prime_j(H^\prime_j) \leq \Gamma^\prime_j$ in the above diagram, which implies that $\varrho^\prime_{k}(H^\prime_j)$ must be nontrivial. This implies that $k = j$. The composition of the morphisms in the lower row equals the identity, since the isomorphisms in the top row are inverses of each other, which implies that the morphism $\Gamma_i\to \Gamma^\prime_j$ in~\eqref{isofspty} is an isomorphism. After relabeling, this gives an isomorphism
    \[
        \Gamma_1 \times \cdots \times \Gamma_r \to \Gamma^\prime_1\times \cdots \times \Gamma^\prime_r
    \]
    inducing $\psi$ as desired.

    We now turn to the proof of the factorisation claim. We content ourselves with the case $r = 2$, the general case being similar. If $H \le \Gamma_1 \times \Gamma_2$ and $\phi : H \to \Lambda$ is a surjective homomorphism onto a hyperbolic orbisurface group, then the images of $H_i = H \cap \Gamma_i$ in $\Lambda$ are commuting normal subgroups. Again, standard facts about Fuchsian groups imply that at least one $\phi(H_i)$ is trivial, say $\phi(H_1)$. Since $H_1$ is the kernel of the projection of $H$ onto $\Gamma_2$, this shows that $\phi$ factors through the projection of $H$ onto $\Gamma_2$.
\end{proof}

\subsection{The universal homomorphism}\label{subsec:univho}

Let $G$ be a K\"ahler group. \Cref{def:univhom} introduced the universal homomorphism of $G$, denoted
\[
    \varrho\coloneq(\varrho_1, \ldots , \varrho_r) : G \longrightarrow \Gamma_1 \times \cdots \times \Gamma_r.
\]
This section collects several of its key properties.

\begin{thm}\label{thm:UnivHom}
    The morphism $\varrho = (\varrho_1, \ldots , \varrho_r)$ has the following properties. 
    \begin{enumerate}
        \item Each homomorphism $\varrho_i$ is surjective with finitely generated kernel.
        \item The image $\varrho (G)\leq \Gamma_1\times \cdots \times \Gamma_r$ is full subdirect and virtually surjects onto pairs. In particular, it is finitely presented.
        \item Every epimorphism $\psi: G\to \Gamma$ onto a hyperbolic orbisurface group $\Gamma$ factors through one of the $\varrho_i$ and thus also through $\varrho$. If $\psi$ moreover has finitely generated kernel, then there is an index $i\in \{1,\ldots , r\}$ and an isomorphism $\theta : \Gamma \to \Gamma_i$ such that $\theta\circ \psi=\varrho_i$.
    \end{enumerate}
\end{thm}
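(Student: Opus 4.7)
The plan would be to establish each claim separately, using as the primary input the four listed properties of fibrations from Kähler manifolds that precede the theorem statement, augmented by classical inputs from non-abelian Hodge theory (Corlette--Simpson, Delzant) and from the Bridson--Howie--Miller--Short package already recorded in Section~\ref{sec.prelims}.

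For~(1), the surjectivity of each $\varrho_i$ onto $\Gamma_i = \pi_1^{orb}(S_{f_i}^{\ast})$ is the standard consequence of a fibration having connected fibres; finite generation of $\ker(\varrho_i)$ is exactly property~(2) of the preamble to the theorem.

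For~(2), subdirectness of $\varrho(G) \leq \Gamma_1\times\cdots\times\Gamma_r$ is immediate from~(1). Fullness, i.e., $\varrho(G)\cap\Gamma_i \ne \{1\}$ for each $i$, I would attack by contradiction: if $\bigcap_{j\ne i}\ker(\varrho_j) \subseteq \ker(\varrho_i)$, then $\varrho_i$ would factor through the combined map $(\varrho_j)_{j\ne i}$, which should be ruled out by the minimality built into the universal list (distinct $\varrho_i$ are associated with non-isomorphic fibrations, so none can be recovered from the others). The VSP condition for $\varrho_{ij}(G) \leq \Gamma_i \times \Gamma_j$ is the crux: I would invoke a Delzant--Gromov--type cut argument together with the Corlette--Simpson factorisation results to show that if the image of $(\varrho_i,\varrho_j)$ had infinite index in $\Gamma_i\times\Gamma_j$, then $\varrho_i$ and $\varrho_j$ would factor through a common fibration of $X$ to a hyperbolic orbisurface, contradicting that $\varrho_i$ and $\varrho_j$ correspond to distinct entries of the universal list. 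Once (1) and VSP are in hand, finite presentation of $\varrho(G)$ follows immediately from \Cref{thm:SubdirProds}.

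For~(3), I would appeal to the Beauville--Siu--Catanese--Corlette--Simpson--Delzant classification: any surjection from a Kähler group onto a hyperbolic orbisurface group is, after postcomposing with an epimorphism between orbisurface groups, induced by a holomorphic fibration with hyperbolic orbifold base. This identifies $\psi$ with a quotient of some $\varrho_i$, giving the factorisation. For the second sentence, if $\ker(\psi)$ is finitely generated, then the induced surjection $\theta : \Gamma_i \to \Gamma$ has finitely generated (normal) kernel; any such normal subgroup of a hyperbolic orbisurface group is either trivial or of finite index, and the latter case contradicts $\Gamma$ being infinite, so $\theta$ is an isomorphism.

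The hardest step is expected to be the VSP property in~(2): it genuinely requires the non-abelian Hodge theoretic input rather than purely group-theoretic manipulations, and rests on ruling out a hidden common factorisation of two distinct fibrations. A secondary technical point is pinning down the precise uniqueness statement from Corlette--Simpson--Delzant needed both to establish fullness and to guarantee that any $\psi$ in~(3) really does match one of the $\varrho_i$ from the universal list rather than something genuinely new.
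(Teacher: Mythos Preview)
Your proposal is correct and aligned with the paper's proof, though the paper proceeds almost entirely by citation: Part~(1) is referred to Catanese and Delzant, Part~(2) is simply cited as \cite[Lem.~9.2]{Llo-20}, and for Part~(3) the paper invokes the harmonic-map dichotomy (a $\psi$-equivariant holomorphic map $\widetilde{X}\to\mathbb{H}^2$ forces non-discreteness unless $\psi$ factors through a fibration) rather than naming the Beauville--Siu--Catanese--Corlette--Simpson--Delzant package directly, but this is the same underlying input. Your argument for the second sentence of~(3)---that the induced surjection $\Gamma_i\to\Gamma$ has finitely generated normal kernel, hence trivial or of finite index, and the latter is impossible since $\Gamma$ is infinite---is exactly the content of the references the paper cites there; note only that the map you call $\theta$ is the inverse of the paper's $\theta$.
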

\begin{proof}
    Part (1) was already mentioned in the introduction. For example, see \cite[Lem.\ 4.2]{Cat-03} or \cite[Thm.\ 3]{Del-08} for a proof. Part (2) is \cite[Lem.\ 9.2]{Llo-20}.
    
    For Part (3), realise $\Gamma$ as a cocompact discrete subgroup of ${\rm PSL}_{2}(\R)$ acting on the hyperbolic plane $\mathbb{H}^2$ and $G$ as the fundamental group of a closed K\"ahler manifold $X$. Note that if $\psi$ factors through a morphism induced by a holomorphic fibration of $X$ onto an orbisurface, the base must be hyperbolic and hence $\psi$ factors through one of the $\varrho_i$. Now suppose for a contradiction that $\psi$ does not factor through such a fibration. According to~\cite[Thm.\ 9.31]{py-book}, there exists a $\psi$-equivariant map $f : \widetilde{X}\to \mathbb{H}^{2}$ that is holomorphic for one of the two ${\rm PSL}_{2}(\mathbb{R})$-invariant complex structures on $\mathbb{H}^{2}$. Moreover, the proof of \cite[Thm.\ 9.31]{py-book} shows in this case that the image of $\psi$ cannot be discrete, which is a contradiction since $\Gamma=\psi(G)$ is discrete by assumption. Therefore $\psi$ must factor through through a morphism induced by a holomorphic fibration onto a hyperbolic orbisurface, i.e., must factor through some $\varrho_i$. For the last claim, dealing with the case where $\psi$ has finitely generated kernel, see the original proof in~\cite{Cat-08} or \cite[Thm.\ 2.23]{py-book}. 
\end{proof}

The next lemma shows that the universal homomorphism for a Kähler full subdirect product of orbisurface groups is simply the inclusion in the direct product. This observation will be used in \Cref{sec.rigidity.subdirect}. 

\begin{lemma}\label{lem:fsd-inclusion-is-univ-hom}
    Let $G< \Gamma_1\times \cdots \times \Gamma_r$ be a K\"ahler full subdirect product of hyperbolic orbisurface groups $\Gamma_i$, $1\leq i \leq r$. Then the inclusion
    \[
        \phi : G\longrightarrow \Gamma_1\times \cdots \times \Gamma_r
    \]
    is the universal homomorphism for $G$.
\end{lemma}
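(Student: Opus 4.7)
The plan is to compare the inclusion $\phi$ to the (a priori possibly different) universal homomorphism
\[
    \varrho = (\varrho_1, \ldots, \varrho_s) : G \longrightarrow \Lambda_1 \times \cdots \times \Lambda_s
\]
provided by \Cref{def:univhom}, and conclude via \Cref{thm:SubdirProds2} that the two coincide up to reordering and isomorphism of factors.

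First I would show that $\varrho$ is injective. Since $G$ is subdirect, each projection $\phi_i : G \to \Gamma_i$ is a surjection onto a hyperbolic orbisurface group. By \Cref{thm:UnivHom}(3), each such $\phi_i$ factors through $\varrho$, so $\ker \varrho \subseteq \ker \phi_i$ for every $i$. Taking the intersection and using that the kernels of the coordinate projections from $\Gamma_1 \times \cdots \times \Gamma_r$ intersect trivially, one gets
\[
    \ker \varrho \;\subseteq\; \bigcap_{i=1}^{r} \ker \phi_i \;=\; G \,\cap\, \bigcap_{i=1}^{r} \Bigl(\prod_{j \neq i} \Gamma_j \Bigr) \;=\; \{1\}.
\]
Hence $\varrho$ gives an isomorphism $G \to \varrho(G)$.

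Next, \Cref{thm:UnivHom}(2) tells us that $\varrho(G) \leq \Lambda_1 \times \cdots \times \Lambda_s$ is itself a full subdirect product of hyperbolic orbisurface groups. Thus $\phi$ and $\varrho$ realise the single abstract group $G$ as two full subdirect products of hyperbolic orbisurface groups, and the map $\varrho\colon G \to \varrho(G)$ is an isomorphism between them. I would then apply \Cref{thm:SubdirProds2} to this isomorphism: it yields $r = s$ and, after permuting the factors $\Lambda_j$, isomorphisms $\theta_i : \Gamma_i \to \Lambda_i$ whose direct product $\Theta$ satisfies $\Theta \circ \phi = \varrho$, i.e.\ $\varrho_i = \theta_i \circ \phi_i$ for every $i$.

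Since the universal homomorphism is only well-defined up to permutation of its factors and post-composition with isomorphisms of the target orbisurface groups, this identifies $\phi$ with $\varrho$. The main subtlety is the injectivity step: one cannot use \Cref{thm:UnivHom}(3) to factor $\varrho$ through $\phi$, so the direction of factorisation must go the other way, using the epimorphisms $\phi_i$ onto hyperbolic orbisurface groups to cut down $\ker \varrho$. Once injectivity is in hand, the hypothesis that $G$ is a full subdirect product in $\prod \Gamma_i$ transports through $\varrho$ and makes \Cref{thm:SubdirProds2} directly applicable.
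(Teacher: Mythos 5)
Your proof is correct, and it takes a genuinely different route from the paper's. The paper first invokes a result of Llosa Isenrich (or Bridson--Miller) to show directly that each coordinate projection $\phi_i : G \to \Gamma_i$ has finitely generated kernel; then \Cref{thm:UnivHom}(3) identifies each $\phi_i$ with a factor of the universal homomorphism, and a factorisation argument (essentially the key claim in the proof of \Cref{thm:SubdirProds2}) shows there are no additional factors. You instead prove that the universal homomorphism $\varrho$ is \emph{injective} — a nice observation obtained by factoring each surjection $\phi_i$ through $\varrho$ and intersecting kernels — and then apply \Cref{thm:SubdirProds2} to the resulting isomorphism $G \cong \varrho(G)$ between two full subdirect products. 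Your route is arguably cleaner: it sidesteps the external citation for finite generation of $\ker\phi_i$, and in fact recovers that statement as a byproduct, since $\phi_i = \theta_i^{-1}\circ\varrho_i$ and $\varrho_i$ has finitely generated kernel by \Cref{thm:UnivHom}(1).

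One small point worth flagging: the literal statement of \Cref{thm:SubdirProds2} only asserts that there are factor isomorphisms inducing \emph{some} isomorphism $H \cong H'$; you need the slightly stronger conclusion that they can be chosen to intertwine the \emph{given} isomorphism $\varrho : G \to \varrho(G)$. This is exactly what the proof of \Cref{thm:SubdirProds2} establishes (it fixes $\psi$ at the start and produces $\Theta$ with $\Theta|_H = \psi$), but since you are citing the statement rather than re-running the argument, it is worth noting explicitly that you are using the conclusion in this sharper form.
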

\begin{proof}
    Since $G<\Gamma_1\times \cdots \times \Gamma_r$ is finitely presented and full subdirect, it follows from \cite[Thm.\ 6.4]{Llo-20} (see also \cite[Thm.\ 4.6]{BridsonMiller-09} for the case of hyperbolic surface groups) that the projections $\phi_i : G\to \Gamma_i$ have finitely generated kernel. Thus the $\phi_i$ are factors of the universal homomorphism, and it remains to show that there are no other factors. Applying the exact same argument as in the proof of \cite[Prop.\ 6.12]{BHMS-13} and \Cref{thm:SubdirProds2}, the composition of $\varrho$ with projection onto a factor of the universal homomorphism must factor through projection onto some $\Gamma_i$, which proves that there are indeed no other factors. 
\end{proof}

\begin{remark}
    Studying the possible images of universal homomorphisms provides insights into the nature of K\"ahler groups. Constraints on such images were established in \cite{Llo-20, Llo-24}, while nontriviality of possible images was shown in \cite{DimPapSuc-09-II, Llo-20}.
\end{remark}

\section{Profinite rigidity of the universal homomorphism}\label{sec.unihom}

This section is dedicated to the proof of \Cref{thm:profinite-univ-hom}, which asserts that the universal homomorphism of a K\"ahler group is a profinite invariant. We also emphasise that this section does not assume residual finiteness of the K\"ahler groups that appear. We collect preliminary results about profinite groups in \Cref{subsec.unihom.prelim} before proving \Cref{thm:profinite-univ-hom} over the course of Sections~\ref{subsec.unihom.proof1} and~\ref{subsec.unihom.proof2}.

\subsection{Preliminaries}\label{subsec.unihom.prelim}

In what follows, $b_{1}^{(2)}(\Gamma)$ denotes the first $\ell^2$-Betti number of a discrete group $\Gamma$. See~\cite{luckbook,pansu} for the definition of this invariant. The following theorem gathers information about the first (usual and $\ell^2$) Betti number of a finitely generated subgroup of a profinite group.

\begin{thm}\label{thm:l2-betti-fingen}
    Suppose that $\Gamma$ is a finitely presented residually finite group and let $\Lambda<\widehat{\Gamma}$ be a dense finitely generated subgroup. Then $b_1(\Lambda)\geq b_1(\Gamma)$ and $b^{(2)}_1(\Lambda)\geq b^{(2)}_1(\Gamma)$.
\end{thm}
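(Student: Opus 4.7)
The plan is to reduce both inequalities to counting $\mathbb{F}_p$-valued homomorphisms for suitably large primes $p$, exploiting the surjection $\widehat{\Lambda}\twoheadrightarrow\widehat{\Gamma}$ induced by density of $\Lambda$ in $\widehat{\Gamma}$, and then to bootstrap the bound on the ordinary first Betti number to its $\ell^{2}$-analogue via L\"uck's approximation theorem applied to a chain of finite index subgroups pulled back from a cofinal chain of open neighbourhoods of the identity in $\widehat{\Gamma}$.

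For the first inequality, both $\Gamma^{ab}$ and $\Lambda^{ab}$ are finitely generated abelian groups and hence have finite torsion subgroups. I would choose a prime $p$ coprime to both torsion subgroups, so that $b_{1}(\Gamma)=\dim_{\mathbb{F}_p}\Hom(\Gamma,\mathbb{F}_p)$ and $b_{1}(\Lambda)=\dim_{\mathbb{F}_p}\Hom(\Lambda,\mathbb{F}_p)$. Every homomorphism $\Gamma\to \mathbb{F}_p$ extends uniquely to a continuous map $\widehat{\Gamma}\to\mathbb{F}_p$, and density of $\Lambda$ makes the restriction $\Hom_{\mathrm{cts}}(\widehat{\Gamma},\mathbb{F}_p)\hookrightarrow\Hom(\Lambda,\mathbb{F}_p)$ injective, so $b_{1}(\Gamma)\leq b_{1}(\Lambda)$.

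For the $\ell^{2}$ inequality, I would fix a cofinal descending chain of open normal subgroups $U_n\trianglelefteq\widehat{\Gamma}$ with $\bigcap_n U_n=\{1\}$ and set $\Gamma_n:=\Gamma\cap U_n$ and $\Lambda_n:=\Lambda\cap U_n$. Since $U_n$ is open and both $\Gamma$ and $\Lambda$ are dense, one has $[\Gamma:\Gamma_n]=[\Lambda:\Lambda_n]=[\widehat{\Gamma}:U_n]=:d_n$, while $\bigcap_n\Gamma_n$ and $\bigcap_n\Lambda_n$ are both trivial. L\"uck's approximation theorem, applied in its extended form valid for first $\ell^{2}$-Betti numbers of finitely generated residually finite groups, then yields
\[
b_{1}^{(2)}(\Gamma)=\lim_{n\to\infty}\frac{b_{1}(\Gamma_n)}{d_n}\qquad\text{and}\qquad b_{1}^{(2)}(\Lambda)=\lim_{n\to\infty}\frac{b_{1}(\Lambda_n)}{d_n},
\]
reducing the statement to the level-wise bound $b_{1}(\Gamma_n)\leq b_{1}(\Lambda_n)$.

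The new subtlety at level $n$ is that $\Gamma_n$ is merely dense in $U_n$, not identified with $\widehat{\Gamma_n}$, so a homomorphism $\varphi:\Gamma_n\to\mathbb{F}_p$ need not a priori factor through $U_n$. I would circumvent this via Frobenius reciprocity: induce $\varphi$ to an $\mathbb{F}_p$-representation of $\Gamma$ of dimension $d_n$; this has finite image and therefore factors through $\widehat{\Gamma}$, so its restriction to $\Gamma_n$ factors through $U_n$. For $p$ coprime to $d_n$ the group algebra $\mathbb{F}_p[\Gamma/\Gamma_n]$ is semisimple, and Mackey's decomposition realises $\varphi$ as a direct summand of this restricted representation, so $\varphi$ itself factors continuously through $U_n$. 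Choosing $p$ in addition coprime to the torsion of $\Gamma_n^{ab}$ and $\Lambda_n^{ab}$ then gives $b_{1}(\Gamma_n)=\dim_{\mathbb{F}_p}\Hom_{\mathrm{cts}}(U_n,\mathbb{F}_p)\leq b_{1}(\Lambda_n)$ and completes the proof. The main technical obstacle is justifying L\"uck approximation for $\Lambda$, which is only assumed to be finitely generated rather than finitely presented; for this I would appeal to the extension of L\"uck's theorem to $b_{1}^{(2)}$ in the finitely generated residually finite setting.
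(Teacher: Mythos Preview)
Your approach is correct and coincides with the argument underlying the references the paper cites (Reid's survey, together with the L\"uck--Osin extension of the approximation theorem to $b_1^{(2)}$ for finitely generated residually finite groups). One simplification: the ``new subtlety at level $n$'' you flag is not actually present. Since $\Gamma_n=\Gamma\cap U_n$ has finite index in the residually finite group $\Gamma$, the full profinite topology on $\Gamma_n$ agrees with the topology induced from $\widehat{\Gamma}$, and hence the closure $U_n$ of $\Gamma_n$ in $\widehat{\Gamma}$ is canonically isomorphic to $\widehat{\Gamma_n}$. Thus every homomorphism $\Gamma_n\to\mathbb{F}_p$ automatically extends to $U_n$, and the level-wise inequality $b_1(\Gamma_n)\le b_1(\Lambda_n)$ is just your first-paragraph argument applied verbatim to the pair $(\Gamma_n,\Lambda_n)$ with $\Lambda_n$ dense in $U_n\cong\widehat{\Gamma_n}$. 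Your induction/Mackey workaround is valid but unnecessary.
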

\begin{proof}
    The inequality between usual Betti numbers is \cite[Lem.\ 4.3]{Rei-15}. The inequality between the first $\ell^{2}$-Betti numbers is \cite[Prop.\ 6.3]{Rei-15} when $\Lambda$ is finitely presented, but the proof is the same under the weaker assumption that $\Lambda$ is finitely generated after applying \cite[Thm.\ 6.5]{Rei-15} by L\"uck and Osin \cite{osiluc} instead of L\"uck's approximation theorem.
\end{proof}

The next result is proved in the last paragraph of the proof of \cite[Thm.\ 3.6]{BriConRei-16} in the case where the inclusion of $\Lambda$ induces an isomorphism $\widehat{\Lambda} \to G$. The proof is analogous in the present case and is included for completeness.

\begin{lemma}\label{lem:finite-normal-profinite-image}
    Let $G$ be a profinite group and $\Lambda < G$ be a dense subgroup. If $N\unlhd \Lambda$ is a finite normal subgroup of $\Lambda$, then it is a normal subgroup of $G$.
\end{lemma}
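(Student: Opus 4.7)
The plan is to exhibit the normaliser $N_G(N)$ as a closed subgroup of $G$ containing the dense subgroup $\Lambda$, from which the conclusion $N_G(N) = G$ follows immediately by density. The crucial input is that $N$ is finite, hence closed in the Hausdorff topological group $G$, and that conjugation is continuous.

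More concretely, I would proceed as follows. First, for each element $n \in N$, consider the conjugation map
\[
    c_{n} : G \longrightarrow G, \qquad g \longmapsto g n g^{-1},
\]
which is continuous for the profinite topology on $G$. Since $N$ is finite and $G$ is Hausdorff, $N$ is a closed subset of $G$, so the preimage
\[
    S_{n} \;\coloneq\; c_{n}^{-1}(N) \;=\; \{\, g \in G \,:\, g n g^{-1} \in N \,\}
\]
is closed in $G$. Intersecting over the finitely many elements of $N$ gives
\[
    N_{G}(N) \;=\; \bigcap_{n \in N} S_{n},
\]
which is therefore closed in $G$ as a finite intersection of closed sets.

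By hypothesis $N$ is normal in $\Lambda$, so $\Lambda \subseteq N_{G}(N)$. Since $\Lambda$ is dense in $G$ and $N_{G}(N)$ is closed, it follows that $N_{G}(N) = G$, i.e., $N$ is normal in $G$. There is no real obstacle here: the argument is entirely formal, and the only point worth flagging is the reliance on finiteness of $N$, which is used both to ensure $N$ is closed (so that each $S_n$ is closed) and to make $N_G(N)$ a \emph{finite} intersection of closed sets. For an infinite $N$ the same scheme would give $N_G(N)$ as an arbitrary intersection of closed sets, which remains closed, but one would additionally need $N$ itself to be closed in $G$; this extra hypothesis is automatic in our setting.
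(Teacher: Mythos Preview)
Your proof is correct and is in fact cleaner than the paper's argument. The paper argues by contradiction: assuming $N$ is not normal in $G$, it chooses $x\in G$ and $n\in N$ with $xnx^{-1}\notin N$, then uses the profinite structure to find a finite quotient $\psi:G\to Q$ that separates $1_G$ from the finite set $\{xnx^{-1}n' : n'\in N\}$; density of $\Lambda$ forces $\psi(N)\unlhd Q$, which produces an element of that set mapping to $1_Q$, contradicting the separation.

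Your approach bypasses finite quotients entirely and works in any Hausdorff topological group: you show directly that the normaliser $N_G(N)$ is closed (as a finite intersection of preimages of the closed set $N$ under continuous conjugation maps) and contains the dense subgroup $\Lambda$, hence equals $G$. This is both shorter and more general; as you note, it extends verbatim to any closed $N$, whereas the paper's proof genuinely uses finiteness of $N$ to control the set $S$. The paper's argument has the minor expository advantage of making the role of finite quotients explicit, which fits the surrounding profinite machinery, but mathematically your route is preferable.
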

\begin{proof}
   Assume that $N\unlhd \Lambda$ is a finite normal subgroup. If $N$ fails to be normal in $G$, then there is $x\in G$ and $n\in N$ such that the finite set
    \[
        S=\left\{xnx^{-1}n'\mid n'\in N\right\}
    \]
    does not contain the identity $1_G$ of $G$. Since $G$ is profinite, there is a surjective homomorphism $\psi: G\to Q$ to a finite group $Q$ whose restriction to $S\cup \{1_G\}$ is injective. Since $\Lambda < G$ is dense in the profinite topology, the restriction $\psi|_\Lambda : \Lambda \to Q$ is surjective. In particular, $\psi(N)\unlhd Q$ is normal, which implies that there exists some $n' \in N$ such that
    \[
        \psi(1_G) = 1_{Q} = \psi(x)\psi(n)\psi(x)^{-1}\psi(n') = \psi(xnx^{-1}n^\prime) \in \psi(S).
    \]
    However, $\psi$ is injective on $S\cup \{1_G\}$ and $1_G \notin S$, which is a contradiction. Thus $N$ is normal in $G$.
\end{proof}

The following will be critical in the proofs of several of the primary results of this paper.

\begin{prop}\label{prop:no-finite-normal-subgroups}
    Let $\Gamma$ be a hyperbolic orbisurface group. Then $\widehat{\Gamma}$ has no nontrivial finite normal subgroups. In particular, if $\Lambda < \widehat{\Gamma}$ is a dense subgroup, then $\Lambda$ has no nontrivial finite normal subgroups.
\end{prop}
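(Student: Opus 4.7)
The ``in particular'' part is immediate from \Cref{lem:finite-normal-profinite-image}: any nontrivial finite normal subgroup of a dense subgroup $\Lambda<\widehat{\Gamma}$ would be normal in $\widehat{\Gamma}$ and hence trivial by the first assertion. So the plan focuses on showing that $\widehat{\Gamma}$ has no nontrivial finite normal subgroup. My first move is to apply Selberg's lemma to extract a torsion-free normal finite-index subgroup $\Gamma_{0}\unlhd \Gamma$; being torsion-free and cocompact Fuchsian, $\Gamma_{0}$ is the fundamental group of a closed orientable hyperbolic surface of genus $g_{0}\geq 2$. I would then invoke the classical fact that $\widehat{\Gamma_{0}}$ is torsion-free, which follows from the goodness of surface groups combined with $\cd(\Gamma_{0})=2$ and Serre's result that a profinite group with finite $p$-cohomological dimension for every prime $p$ is torsion-free.

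Now suppose for contradiction that $N\unlhd \widehat{\Gamma}$ is a nontrivial finite normal subgroup. Torsion-freeness of $\widehat{\Gamma_{0}}$ gives $N\cap \widehat{\Gamma_{0}}=\{1\}$, and the conjugation action of $\widehat{\Gamma}$ on $N$ yields a continuous map $\widehat{\Gamma}\to \Aut(N)$ with open kernel $C_{\widehat{\Gamma}}(N)$. The open subgroup $C_{\widehat{\Gamma}}(N)\cap \widehat{\Gamma_{0}}$ corresponds, through the bijection between open subgroups of a profinite completion and finite-index subgroups of a residually finite group, to a finite-index subgroup $H_{1}\leq \Gamma_{0}$, which is itself a closed surface group of some genus $g_{1}\geq 2$. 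Because $N$ centralises $\widehat{H_{1}}$ and meets it trivially, the open subgroup $N\cdot \widehat{H_{1}}=N\times \widehat{H_{1}}$ of $\widehat{\Gamma}$ is an internal direct product, and so equals $\widehat{H_{2}}$ for a finite-index cocompact Fuchsian subgroup $H_{2}\leq \Gamma$ containing $H_{1}$ with $[H_{2}:H_{1}]=|N|$.

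The final step is to extract a contradiction from this direct product decomposition. Passing to profinite abelianizations and reading off torsion-free ranks gives $b_{1}(H_{2})=b_{1}(H_{1})=2g_{1}$, which forces the genus $g_{2}$ of the underlying surface of $H_{2}$ to equal $g_{1}=:g$. Multiplicativity of the orbifold Euler characteristic under finite-index inclusions then yields
\[
    2-2g=\chi(H_{1})=|N|\cdot\chi^{orb}(H_{2})=|N|\Bigl(2-2g-\sum_{j=1}^{k}\bigl(1-\tfrac{1}{m_{j}}\bigr)\Bigr),
\]
where $(g;m_{1},\ldots,m_{k})$ denotes the orbifold signature of $H_{2}$. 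Rearranging produces
\[
    (|N|-1)(2g-2)+|N|\sum_{j=1}^{k}\bigl(1-\tfrac{1}{m_{j}}\bigr)=0,
\]
which is impossible as soon as $|N|\geq 2$, because both summands are non-negative and the first is strictly positive (as $g\geq 2$). Hence $N=\{1\}$.

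I expect the main obstacle to be promoting the \emph{a priori} only semidirect product structure (coming from $N\cap \widehat{\Gamma_{0}}=\{1\}$ together with normality of $\widehat{\Gamma_{0}}$) to a genuine direct product; this is achieved by exploiting the finiteness of $\Aut(N)$ to pass to the centralising open subgroup $\widehat{H_{1}}$. With the direct product $\widehat{H_{2}}=N\times \widehat{H_{1}}$ in place, the genus match extracted from first Betti numbers together with the multiplicativity of the orbifold Euler characteristic makes the sign contradiction essentially forced.
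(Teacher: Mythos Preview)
Your argument is correct. The paper, by contrast, does not supply a proof of the first assertion at all: it simply cites \cite[Lem.\ 2.1]{BesGruZal-14} (and \cite[Cor.\ 5.2]{BriConRei-16}) for the fact that $\widehat{\Gamma}$ has no nontrivial finite normal subgroups, and then invokes \Cref{lem:finite-normal-profinite-image} for the ``in particular'' clause, exactly as you do.

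Your route is therefore genuinely different in that it is self-contained. The key inputs you use---torsion-freeness of $\widehat{\Gamma_0}$ via goodness and finite cohomological dimension, openness of the centraliser of a finite subgroup, the correspondence between open subgroups and finite-index subgroups, and multiplicativity of the orbifold Euler characteristic---are all standard and correctly applied. The step you flagged as the potential obstacle (upgrading from a complement to a direct factor by passing to the centralising open subgroup) is handled cleanly, and the Betti-number/Euler-characteristic endgame is a nice way to force the contradiction. The trade-off is length: the paper's approach is a two-line citation, while yours reproves a known lemma; on the other hand, your version makes the paper more independent of \cite{BesGruZal-14,BriConRei-16} at this point.
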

\begin{proof}
    The fact that $\widehat{\Gamma}$ has no nontrivial finite normal subgroups is \cite[Lem.\ 2.1]{BesGruZal-14} (see also \cite[Cor.\ 5.2]{BriConRei-16}). The second assertion then follows from~\Cref{lem:finite-normal-profinite-image}.
\end{proof}

\subsection{Homomorphisms to profinite orbisurface groups}\label{subsec.unihom.proof1}

The following notation will remain fixed throughout this section. Suppose that $G$ is a K\"ahler group and $\phi=(\phi_1,\ldots , \phi_r): G\to \Gamma_1\times \dots \times \Gamma_r$ is its universal homomorphism. Let
\[
    \widehat{\phi}=\left(\widehat{\phi}_1,\dots,\widehat{\phi}_r\right): \widehat{G}\longrightarrow \widehat{\Gamma}_1\times \dots \times \widehat{\Gamma}_r
\]
denote the induced morphism on profinite completions and $\iota_{G}: G \to \widehat{G}$ (resp.\ $\iota_{\Gamma_i}: \Gamma_i \to \widehat{\Gamma}_i$) be the canonical homomorphism from $G$ (resp.\ $\Gamma_i$) to its profinite completion. The goal of this section is to show that $\widehat{\phi}$ has a universal property with respect to homomorphisms to profinite hyperbolic orbisurface groups analogous to the universal property of $\phi$. From here onward, the expression \emph{profinite hyperbolic orbisurface group} is used as shorthand for the profinite completion of a hyperbolic orbisurface group. The following result will be essential in all that follows.

\begin{lemma}\label{lem:profinite-factorisations}
    Suppose that $\widehat{\Sigma}$ is a profinite hyperbolic orbisurface group with $b_1(\Sigma)>0$ and let $\widehat{\psi}:\widehat{G}\to \widehat{\Sigma}$ be a surjective homomorphism. Then $\widehat{\psi}\circ \iota_G$ factors through a unique $\phi_i$ and thus $\widehat{\psi}$ factors through a unique $\widehat{\phi}_i$. Moreover, if $\ker(\widehat{\psi}\circ \iota_G)$ is finitely generated, then the kernels of $\widehat{\psi}\circ \iota_G$ and $\phi_i$ coincide.
\end{lemma}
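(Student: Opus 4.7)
The plan is to reduce the claim to \Cref{thm:UnivHom}(3), which asserts that every epimorphism from $G$ onto a hyperbolic orbisurface group factors uniquely through some factor $\phi_i$ of the universal homomorphism. The difficulty is that $\widehat{\Sigma}$ is not itself a discrete orbisurface group, so the problem reduces to extracting a genuine orbisurface epimorphism of $G$ from the profinite data.

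Set $L := (\widehat{\psi} \circ \iota_{G})(G)$, a finitely generated dense subgroup of $\widehat{\Sigma}$. By \Cref{prop:no-finite-normal-subgroups}, $L$ has no nontrivial finite normal subgroups. By \Cref{thm:l2-betti-fingen} combined with the hypothesis $b_1(\Sigma) > 0$, we have $b_1(L) \geq b_1(\Sigma) > 0$, and since $\Sigma$ is hyperbolic also $b_1^{(2)}(L) \geq b_1^{(2)}(\Sigma) = -\chi^{orb}(\Sigma) > 0$. Thus $\widehat{\psi}\circ \iota_G : G \twoheadrightarrow L$ is an epimorphism from the K\"ahler group $G$ onto a finitely generated group whose first (usual and $\ell^2$) Betti numbers are positive.

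The central step, which I expect to be the main obstacle, is to show that the surjection $G \twoheadrightarrow L$ factors through an epimorphism $G \twoheadrightarrow \Lambda$ onto a discrete hyperbolic orbisurface group $\Lambda$. Once this is achieved, \Cref{thm:UnivHom}(3) applied to $G \twoheadrightarrow \Lambda$ yields a unique $\phi_i$ through which it factors, and consequently $\widehat{\psi}\circ \iota_G$ also factors through $\phi_i$. To produce the factorisation through $\Lambda$, one exploits the K\"ahler hypothesis on $G$ together with positivity of the first Betti numbers of $L$: a natural approach invokes Delzant's description of the BNS invariant of K\"ahler groups, namely $H^1(G,\mathbb{R})\setminus \Sigma^1(G) = \bigcup_{i=1}^r V_i$ with $V_i = \phi_i^*(H^1(\Gamma_i,\mathbb{R}))$, combined with the Gromov--Delzant fibration theory used in the proof of \Cref{thm:UnivHom}(3), in order to identify the index $i$ compatible with $\widehat{\psi}$.

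Once the factorisation through some $\phi_i$ is established, uniqueness of $i$ follows from the uniqueness clause of \Cref{thm:UnivHom}(3). For the moreover part, suppose $K := \ker(\widehat{\psi} \circ \iota_G)$ is finitely generated, and set $N := K / \ker(\phi_i) \unlhd \Gamma_i$; then $N$ is a finitely generated normal subgroup of the hyperbolic orbisurface group $\Gamma_i$. A classical result (obtained by passing to a torsion-free surface subgroup of finite index, together with \Cref{prop:no-finite-normal-subgroups} to rule out nontrivial finite normal subgroups) shows that $N$ is either trivial or of finite index in $\Gamma_i$. The finite-index case would make $L = \Gamma_i / N$ finite, contradicting density of $L$ in the infinite profinite group $\widehat{\Sigma}$. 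Hence $N = 1$ and $K = \ker(\phi_i)$, as required.
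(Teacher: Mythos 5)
Your setup is correct, and you rightly identify the positivity of $b_1(\Lambda)$ and $b_1^{(2)}(\Lambda)$ (via \Cref{thm:l2-betti-fingen}) together with \Cref{prop:no-finite-normal-subgroups} as the relevant inputs, but the core of the argument is missing. You state that ``the central step, which I expect to be the main obstacle, is to show that the surjection $G\twoheadrightarrow L$ factors through an epimorphism $G\twoheadrightarrow\Lambda$ onto a discrete hyperbolic orbisurface group'' and then defer it to ``Delzant's BNS description'' and ``Gromov--Delzant fibration theory.'' That step is not filled in, and the plan as stated is essentially circular: producing a factorisation of $G\twoheadrightarrow L$ through a discrete orbisurface group is precisely what the lemma is after (with $\Lambda=\Gamma_i$), so one cannot first produce such a $\Lambda$ in order to then invoke \Cref{thm:UnivHom}(3).

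The paper sidesteps this by descending to characters valued in $\Z$ rather than trying to manufacture an orbisurface quotient directly. Pick any nontrivial $\nu:\Lambda\to\Z$ (possible since $b_1(\Lambda)>0$). Because $b_1^{(2)}(\Lambda)>0$, Gaboriau's theorem \cite[Thm.~6.8]{Gab-02} forces $\ker\nu$ to be either finite normal or not finitely generated, and \Cref{prop:no-finite-normal-subgroups} rules out the first alternative. So $\ker(\nu\circ\widehat\psi\circ\iota_G)$ is not finitely generated, and the Napier--Ramachandran theorem (equivalently Delzant's BNS description) applied to the K\"ahler group $G$ factors $\nu\circ\widehat\psi\circ\iota_G$ through some $\phi_i$. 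This gives a factorisation only of the $\Z$-valued character, not of $\widehat\psi\circ\iota_G$ itself; the second missing step is to observe that $N:=(\widehat\psi\circ\iota_G)(\ker\phi_i)$ is a finitely generated normal subgroup of $\Lambda$, so by Gaboriau again it is finite, hence trivial by \Cref{prop:no-finite-normal-subgroups}, whence $\ker\phi_i\subseteq\ker(\widehat\psi\circ\iota_G)$ and the desired factorisation follows. Neither of these two applications of Gaboriau's theorem appears in your proposal, and they are exactly what makes the argument go.

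Two smaller issues: \Cref{thm:UnivHom}(3) does not assert uniqueness of the factor $\phi_i$, only existence, so you cannot read uniqueness off it; the paper instead shows that if $\widehat\psi\circ\iota_G$ factored through both $\phi_i$ and $\phi_j$ then $\phi_j(\ker\phi_i)$ would be a finitely generated normal subgroup of $\Gamma_j$, hence trivial or finite index, and the finite-index case would force $\Lambda$ to be finite. Your argument for the ``moreover'' clause, on the other hand, is essentially correct and matches the paper.
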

\begin{proof}
    Define $\Lambda = (\widehat{\psi}\circ\iota_G)(G)$, which is a dense, finitely generated subgroup of $\widehat{\Sigma}$. Thus $b_1(\Lambda)\geq b_1(\Sigma) > 0$ and
    \[
        b_1^{(2)}(\Lambda)\geq b_1^{(2)}(\Sigma)=-\chi(\Sigma)>0
    \]
    by \Cref{thm:l2-betti-fingen}, where $\chi(\Sigma)$ denotes the orbifold Euler characteristic. Fix a nontrivial homomorphism $\nu:\Lambda \to \Z$.

    Since $b_1^{(2)}(\Lambda)\neq 0$, \cite[Thm.\ 6.8]{Gab-02} implies that either $\ker(\nu)$ is a finite normal subgroup of $\Lambda$ or it is not finitely generated. Then $\ker(\nu)$ cannot be finite by \Cref{prop:no-finite-normal-subgroups}, so it is not finitely generated. Since $\ker(\nu)$ is not finitely generated, neither is $\ker(\nu\circ \widehat{\psi}\circ \iota_G)$.

    A theorem of Napier and Ramachandran~\cite[Thm.\ 4.3]{NapRam-01} then implies that $\nu\circ \widehat{\psi}\circ \iota_G$ factors through one of the $\G_i$ for some index $i$ (see~\cite{Del-BNS} for a more general statement), yielding a commutative diagram:
    \[
        \begin{tikzcd}
            G\arrow[r, "\widehat{\psi}\circ\iota_G"] \arrow[d, "\phi_i" left] & \Lambda\arrow[d, "\nu"] \\
            \Gamma_i \arrow[r] & \Z
        \end{tikzcd}
    \]
    Since $\ker(\phi_i)\unlhd G$ is a finitely generated normal subgroup,
    \[
        N\coloneq(\widehat{\psi}\circ\iota_G)(\ker(\phi_i))\unlhd \Lambda
    \]
    is a finitely generated normal subgroup of $\Lambda$. It follows again from \cite[Thm.\ 6.8]{Gab-02} that $N$ is finite, and then from \Cref{prop:no-finite-normal-subgroups} that $N$ is trivial. Thus $\widehat{\psi}\circ\iota_G$ factors through $\phi_i$. Functoriality of profinite completions implies that $\widehat{\psi}$ factors through $\widehat{\phi}_i$, proving the existence part of the first assertion.
    
    For uniqueness, $\widehat{\psi}\circ \iota_G$ factoring through both $\phi_i$ and $\phi_j$ produces a commutative diagram:
    \[
        \begin{tikzcd}
            G \arrow[r, "\phi_j"] \ar[d, "\phi_i" left] & \Gamma_{j} \arrow[d, "u_{j}"] \\
            \Gamma_{i}\arrow[r, "u_{i}"] &\Lambda
        \end{tikzcd}
    \]
    The group $\phi_{j}(\ker (\phi_i ))$ is a finitely generated normal subgroup of the hyperbolic orbisurface group $\Gamma_j$, hence it is trivial or has finite index in $\Gamma_j$. The image of $\phi_{j}(\ker (\phi_i ))$ under $u_{j}$ is trivial, so if it had finite index in $\Gamma_j$ then $\Lambda$ would be finite, which is a contradiction. Therefore, $\phi_{j}(\ker (\phi_i ))$ is trivial, and by symmetry $\phi_{i}(\ker (\phi_j ))$ is trivial as well. This proves that $\phi_i$ and $\phi_j$ have the same kernel, hence they are equivalent and $i=j$. The uniqueness statement for $\widehat{\psi}$ follows from the uniqueness statement for $\widehat{\psi}\circ \iota_G$ and the observation that a factorisation of $\widehat{\psi}$ through $\widehat{\phi}_j$ induces a factorisation of $\widehat{\psi}\circ \iota_G$ through $\phi_j$.

    Finally, for the ``moreover'' part observe that if $\ker(\widehat{\psi}\circ \iota_G)$ is finitely generated, then $\phi_i (\ker(\widehat{\psi}\circ \iota_G))$ is a finitely generated normal subgroup of $\Gamma_i$. Since $\Lambda$ is infinite, $\phi_i (\ker(\widehat{\psi}\circ \iota_G))$ must have infinite index in $\Gamma_i$ and thus it is trivial, implying that $\ker (\phi_i ) = \ker (\widehat{\psi}\circ \iota_G)$, and so $u_i$ is an isomorphism as desired.
\end{proof}

The next goal is to generalise \Cref{lem:profinite-factorisations} to arbitrary profinite hyperbolic orbisurface groups.

\begin{corollary}\label{cor:profinite-factorisations}
    The conclusion of \Cref{lem:profinite-factorisations} holds with $\widehat{\Sigma}$ an arbitrary profinite hyperbolic orbisurface group. 
\end{corollary}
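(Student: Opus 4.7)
The plan is to reduce to \Cref{lem:profinite-factorisations} by passing to a finite-index subgroup of $\Sigma$ on which the positive-$b_1$ hypothesis holds, and then descend the resulting factorisation back to $G$ and $\Sigma$. Every hyperbolic orbisurface group contains a torsion-free normal finite-index subgroup isomorphic to a closed surface group of genus at least $2$, so fix such a subgroup $\Sigma_0 \trianglelefteq \Sigma$ with $b_1(\Sigma_0) > 0$, and identify $\widehat{\Sigma_0}$ with the open subgroup $\overline{\iota_\Sigma(\Sigma_0)} \leq \widehat{\Sigma}$. Setting $G_0 := \iota_G^{-1}(\widehat{\psi}^{-1}(\widehat{\Sigma_0}))$ produces a finite-index normal K\"ahler subgroup of $G$, and \Cref{lem:profinite-factorisations} applied to the restricted surjection $\widehat{\psi}|_{\widehat{G_0}} : \widehat{G_0} \to \widehat{\Sigma_0}$ yields a unique factor $\phi^0_j : G_0 \to \Gamma^0_j$ of the universal homomorphism of $G_0$ through which $\widehat{\psi} \circ \iota_G|_{G_0}$ factors.

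The geometric step is to descend $\phi^0_j$ to a factor of the universal homomorphism of $G$. For $g \in G$, conjugation $\gamma_g$ transports $\phi^0_j$ to a new factor $\phi^0_j \circ \gamma_g$ of the universal homomorphism of $G_0$, while the map $\widehat{\psi}\circ\iota_G|_{G_0}\circ\gamma_g = \gamma_{\widehat{\psi}(\iota_G(g))} \circ \widehat{\psi}\circ\iota_G|_{G_0}$ factors through both $\phi^0_j$ and $\phi^0_j\circ\gamma_g$. Uniqueness in \Cref{lem:profinite-factorisations} forces these factors to be equivalent. Translating to the holomorphic side, if $f^0_j : X_0 \to S^*_0$ is the fibration corresponding to $\phi^0_j$ (with $X_0$ the K\"ahler cover of $X$ associated to $G_0$), this equivalence produces a unique biholomorphism $\sigma_g : S^*_0 \to S^*_0$ satisfying $f^0_j \circ g = \sigma_g \circ f^0_j$. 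The assignment $g \mapsto \sigma_g$ is a homomorphism that factors through $G/G_0$, so $f^0_j$ descends to a holomorphic fibration $f : X \to S^* := S^*_0/(G/G_0)$ onto a hyperbolic orbisurface (hyperbolicity passes from $S^*_0$ to $S^*$ by multiplicativity of the orbifold Euler characteristic). By \Cref{thm:UnivHom}, the map $\phi_i : G \to \pi_1^{orb}(S^*) = \Gamma_i$ induced by $f$ is a factor of the universal homomorphism of $G$, and by construction $\phi_i|_{G_0}$ agrees with $\phi^0_j$ under the inclusion $\Gamma^0_j \leq \Gamma_i$ as a finite-index subgroup.

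Finally, to verify that $\widehat{\psi}\circ \iota_G$ factors through $\phi_i$, observe that $\ker(\phi_i) \cap G_0 = \ker(\phi^0_j) \subseteq \ker(\widehat{\psi} \circ \iota_G)$ and $[\ker(\phi_i) : \ker(\phi_i) \cap G_0] \leq [G:G_0] < \infty$. Hence $(\widehat{\psi} \circ \iota_G)(\ker(\phi_i))$ is a finite normal subgroup of the dense subgroup $(\widehat{\psi} \circ \iota_G)(G) \leq \widehat{\Sigma}$, which must be trivial by \Cref{prop:no-finite-normal-subgroups}. Thus $\ker(\phi_i) \subseteq \ker(\widehat{\psi}\circ\iota_G)$, yielding the required factorisation of $\widehat{\psi}\circ \iota_G$ through $\phi_i$; functoriality of the profinite completion gives the factorisation of $\widehat{\psi}$ through $\widehat{\phi_i}$. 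Uniqueness of the factor and the ``moreover'' clause follow from essentially the same arguments as in the proof of \Cref{lem:profinite-factorisations}. The main obstacle is the descent in the second step: confirming that the abstract uniqueness provided by \Cref{lem:profinite-factorisations} genuinely produces an equivariant biholomorphic action of $G/G_0$ on $S^*_0$ (rather than merely an outer action that would require Nielsen realisation to realise concretely), and that the resulting quotient is a fibration with the expected orbifold structure.
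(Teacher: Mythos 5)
Your proposal follows the same overall strategy as the paper: pass to a finite-index normal surface subgroup $\Sigma_0 \trianglelefteq \Sigma$ with $b_1(\Sigma_0)>0$, pull back to $G_0 = (\widehat{\psi}\circ\iota_G)^{-1}(\widehat{\Sigma}_0)$, apply \Cref{lem:profinite-factorisations} to get a factorisation through a component $\phi_j^0$ of the universal homomorphism of $G_0$, and then descend to $G$. The difference is what happens in the descent step. The paper isolates this into a self-contained statement, \Cref{lem:virtfactor}, and simply invokes it; you instead re-derive the descent inline, and you do so with a genuinely different argument. Your version first uses the \emph{uniqueness} part of \Cref{lem:profinite-factorisations} (applied to $\widehat{\psi}\circ\iota_G|_{G_0}\circ\gamma_g = \gamma_{\widehat{\psi}(\iota_G(g))}\circ(\widehat{\psi}\circ\iota_G|_{G_0})$) to conclude that $\phi_j^0\circ\gamma_g$ is equivalent to $\phi_j^0$, i.e., that $\ker(\phi_j^0)$ is normal in all of $G$, and only then passes to the fibration. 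The paper's \Cref{lem:virtfactor} never uses that uniqueness; it works in a more general situation (a homomorphism to an abstract $\Lambda$ with no finite normal subgroups, not assumed to be a profinite completion) and instead argues directly, at the level of the fibration $p: X_0 \to S_0$, that $g(F)$ must be a fibre. Your route is slightly tighter in the specific setting of the corollary; the paper's route is reusable as a general lemma.

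Your final caveat is, fortunately, a non-issue, and you should remove it rather than leave it as an open worry. No Nielsen realisation is needed: the equivalence $\phi_j^0\simeq\phi_j^0\circ\gamma_g$ is not lifted abstractly to a homeomorphism of $S_0^\ast$. Instead, once you know $\ker(\phi_j^0)$ is $G$-normal, the biholomorphism $\sigma_g$ arises geometrically: $g$ acts on $X_0$ as a deck transformation, and for a generic fibre $F$ of $f_j^0$ the composite $f_j^0\circ g|_F$ has trivial image in $\pi_1^{orb}(S_0^\ast)$ (because $\pi_1(F)$ maps into the conjugation-invariant kernel), hence is nonsurjective on $\pi_1$, hence constant; by symmetry $g$ permutes the fibres of $f_j^0$, which defines $\sigma_g$ on the nose and automatically preserves fibre multiplicities. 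This is exactly the argument carried out in the middle paragraphs of the paper's proof of \Cref{lem:virtfactor}. So the gap in your write-up is not a flaw in the method but a missing few lines: where you write ``this equivalence produces a unique biholomorphism $\sigma_g$'', you should insert the fibre-preservation argument (or simply cite \Cref{lem:virtfactor}). One more small point: rather than arguing the hyperbolicity of the base orbifold via multiplicativity of orbifold Euler characteristics for the quotient $S_0^\ast/(G/G_0)$ — which requires checking that this quotient orbifold structure agrees with the one associated to the fibration $X\to S$ — the paper's proof takes the Stein factorisation and gets hyperbolicity for free from the fact that $\Lambda$ is not virtually abelian; this cleanly bypasses the comparison of orbifold structures.
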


The proof of \Cref{cor:profinite-factorisations} requires the following lemma.

\begin{lemma}\label{lem:virtfactor}
    Suppose that $G$ is a K\"ahler group, $\Lambda$ is an infinite group with no nontrivial finite normal subgroups that is not virtually abelian, and $\psi : G \to \Lambda$ is a surjective homomorphism. If $G_0$ is a finite index normal subgroup of $G$ for which the restriction of $\psi$ to $G_0$ factors through a component projection for the universal homomorphism of $G_0$, then $\psi$ factors through a component projection for the universal homomorphism of $G$. 
\end{lemma}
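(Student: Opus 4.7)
The plan is to show that the $G$-conjugation orbit of $\varrho_{0,i}$ among the components of the universal homomorphism of $G_{0}$ must have size one, and then to descend the associated holomorphic fibration from a finite cover of $X$ to $X$ itself in order to produce a component of the universal homomorphism of $G$ through which $\psi$ factors. Write $(\varrho_{0,1},\ldots,\varrho_{0,s}): G_{0}\to\Gamma_{0,1}\times\cdots\times\Gamma_{0,s}$ for the universal homomorphism of $G_{0}$. Since $G_{0}\trianglelefteq G$, for any $g\in G$ the surjection $\varrho_{0,j}\circ c_{g}: G_{0}\to\Gamma_{0,j}$ (where $c_{g}$ denotes conjugation by $g$) has finitely generated kernel and hyperbolic orbisurface target, so by the uniqueness clause in part (3) of \Cref{thm:UnivHom} it is equivalent to a unique $\varrho_{0,\sigma_{g}(j)}$. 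This yields a $G$-action on $\{1,\ldots,s\}$; let $O$ denote the orbit of $i$.

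Suppose first that $|O|\geq 2$ and fix two distinct indices $i_{1},i_{2}\in O$. Normality of $\ker(\psi)$ in $G$ together with $\ker(\varrho_{0,i})\subseteq\ker(\psi|_{G_{0}})$ gives $g\ker(\varrho_{0,i})g^{-1}=\ker(\varrho_{0,\sigma_{g}(i)})\subseteq\ker(\psi|_{G_{0}})$ for every $g\in G$, so $\ker(\varrho_{0,i_{1}})\cdot\ker(\varrho_{0,i_{2}})\subseteq\ker(\psi|_{G_{0}})$. By the VSP property in part (2) of \Cref{thm:UnivHom}, the image of the joint projection $G_{0}\to\Gamma_{0,i_{1}}\times\Gamma_{0,i_{2}}$ has finite index in $\Gamma_{0,i_{1}}\times\Gamma_{0,i_{2}}$, and intersecting this image with each coordinate factor then shows that $G_{0}/(\ker(\varrho_{0,i_{1}})\cdot\ker(\varrho_{0,i_{2}}))$ is finite. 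Hence $\psi(G_{0})$ is finite, making $\Lambda$ finite since $[G:G_{0}]<\infty$, and so virtually abelian, contradicting the hypothesis on $\Lambda$. Therefore $|O|=1$, i.e., $\varrho_{0,i}$ is $G$-invariant up to equivalence.

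Next, write $G=\pi_{1}(X)$ for a compact K\"ahler manifold $X$ and $G_{0}=\pi_{1}(X_{0})$ for the associated finite Galois cover with deck group $F=G/G_{0}$. The component $\varrho_{0,i}$ is induced by a holomorphic fibration $f_{0}:X_{0}\to S_{0}$ onto a hyperbolic orbisurface. By the bijective correspondence between such fibrations and their induced $\pi_{1}$-surjections underlying part (3) of \Cref{thm:UnivHom}, the $G$-invariance of $\varrho_{0,i}$ provides a holomorphic $F$-action on $S_{0}$ making $f_{0}$ equivariant, so $f_{0}$ descends to a holomorphic fibration $\bar{f}:X\to S_{0}/F$ onto the hyperbolic orbisurface $S_{0}/F$. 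The induced surjection $\phi_{k}:G\to\pi_{1}^{orb}(S_{0}/F)$ has finitely generated kernel, so by part (3) of \Cref{thm:UnivHom} it coincides with a component of the universal homomorphism of $G$, and its restriction to $G_{0}$ agrees with $\varrho_{0,i}$ after identifying $\Gamma_{0,i}=\pi_{1}^{orb}(S_{0})$ with the corresponding finite index subgroup of $\pi_{1}^{orb}(S_{0}/F)$.

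Finally, $\ker(\phi_{k})\cap G_{0}=\ker(\varrho_{0,i})\subseteq\ker(\psi)$ and $[\ker(\phi_{k}):\ker(\phi_{k})\cap G_{0}]\leq[G:G_{0}]$ is finite, so $\psi(\ker(\phi_{k}))$ is a finite subgroup of $\Lambda$. Since $\ker(\phi_{k})$ is normal in $G$, the image $\psi(\ker(\phi_{k}))$ is a finite normal subgroup of $\Lambda$, hence trivial by hypothesis. Therefore $\ker(\phi_{k})\subseteq\ker(\psi)$ and $\psi$ factors through $\phi_{k}$ as required. The main obstacle is the descent step in the $|O|=1$ case: converting the combinatorial $G$-invariance of $\varrho_{0,i}$ into a genuine descent of $f_{0}$ and verifying that the quotient $S_{0}/F$ is a hyperbolic orbisurface (not merely a virtual one) and that $\bar{f}$ has connected fibres.
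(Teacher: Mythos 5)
Your proof takes a genuinely different route from the paper's. You first establish an algebraic fact --- that the $G$-orbit of the component $\varrho_{0,i}$ under the conjugation action on the components of the universal homomorphism of $G_0$ has size one --- using the VSP property. This is a nice argument, and the index computation you sketch (intersecting the finite-index image of $(\varrho_{0,i_1},\varrho_{0,i_2})$ with a coordinate factor to conclude $G_0/(\ker\varrho_{0,i_1}\cdot\ker\varrho_{0,i_2})$ is finite) is correct. Your final step, reducing to the vanishing of $\psi(\ker\phi_k)$ as a finite normal subgroup of $\Lambda$, is also in the spirit of what the paper does.

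However, as you yourself flag, there is a genuine gap in the middle, and it is not a small technicality. The ``bijective correspondence'' you invoke to pass from the $G$-invariance (up to equivalence) of $\varrho_{0,i}$ to a holomorphic $F$-equivariant structure on $f_0 : X_0 \to S_0$ is not a consequence of part (3) of \Cref{thm:UnivHom}; that part is a statement about factorisations of abstract epimorphisms, not about rigidity of the holomorphic fibration realising a given $\pi_1$-surjection. Carrying out the descent requires showing, for each $g \in G$, that $g$ carries each fibre of $f_0$ into a fibre. The natural way to do this from your normality statement is the following: normality of $\ker\varrho_{0,i}$ in $G$ forces the image of $\pi_1(g(F))$ in $\pi_1^{orb}(S_0)$ to be trivial, and then one must argue that the restricted holomorphic map $g(F) \to S_0$ is constant, e.g.\ because a non-constant map from a compact analytic set to the orbisurface lifts to the universal cover $\mathbb{H}^2$ and then violates the maximum principle (or, as in the paper, by noting that such a map must be surjective with finite-index image on $\pi_1$). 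But this is exactly the geometric core of the paper's proof, and it is essentially all the paper does: the paper never establishes $|O|=1$ as a separate algebraic step, and it deploys the hypothesis on $\Lambda$ (infinite, no nontrivial finite normal subgroups) directly inside the geometric argument rather than upstream of it. So your algebraic preliminaries are clean, but they do not make the remaining geometric work any lighter.

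There are two further soft spots downstream of the descent that the paper handles carefully and you skip. First, the quotient $X \to S_0/F$ need not be taken at face value as the relevant fibration --- the paper passes to the Stein factorisation $\pi : X \to \Sigma$ before arguing further, precisely so that the fibres are connected and the induced orbifold structure is the one canonically attached to a fibration. Second, your identity $\ker(\phi_k)\cap G_0 = \ker(\varrho_{0,i})$ is not obviously exact: the orbifold structure on $\Sigma$ (from multiple fibres of $\pi$) and the orbifold structure induced on $S_0$ by the covering $S_0 \to \Sigma$ need not coincide with the one from the multiple fibres of $f_0$, and the paper avoids having to compare them by arguing with a generic fibre of $\pi$ and the finite cover $F \to F'$ directly, rather than with a claimed identification of kernels. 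Finally, you also need to verify that the resulting orbisurface is hyperbolic (the paper deduces this from $\Lambda$ not being virtually abelian), which you note but do not address.
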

\begin{proof}
    Fix a closed K\"ahler manifold $X$ with fundamental group $G$ and consider the finite Galois covering $X_0\to X$ corresponding to the subgroup $G_0 \le G$. By assumption, there exists a fibration $p : X_{0}\to S_0$ onto a Riemann surface such that the restriction of $\psi$ to $G_0$ factors through the morphism $p_{\ast} : G_0 \to \pi_{1}^{orb}(S_0)$ induced by $p$, where $S_0$ is given the orbifold structure defined by the multiple fibers of $p$ and $\ker(p_\ast)$ is finitely generated. Write $\psi|_{G_{0}}=v\circ p_{\ast}$, for some morphism $v : \pi_{1}^{orb}(S_0)\to \Lambda$.

    Fix $g\in G$ and consider the group
    \begin{equation}\label{eq:notnormalsbg}
        g \ker(p_{\ast}) g^{-1} \le G_0,
    \end{equation}
    which is a subgroup of $G_0$ since $G_0$ is normal in $G$. It is normal in $G_0$, being the image of a normal subgroup by an (outer) automorphism, and is finitely generated because $\ker (p_{\ast})$ is. Therefore
    \[
        H\coloneq p_{\ast}(g \ker(p_{\ast}) g^{-1})
    \]
    is a finitely generated normal subgroup of $\pi_{1}^{orb}(S_0)$, which implies that it is either trivial or has finite index. 
    
    Fix a base point $x_{\ast}\in X_0$ contained in a smooth fiber of $p$, a path $\alpha$ from $x_{\ast}$ to $g(x_{\ast})$, and let $F\coloneq p^{-1}(p(x_{\ast}))$. Each element from the subgroup defined in~\eqref{eq:notnormalsbg} can be represented by a loop obtained by concatenating $\alpha$, a loop in $g(F)$ based at $g(x_{\ast})$, and then the inverse of $\alpha$. We will show that $g(F)$ is a fibre of $p$ for all $g \in G$, which implies that $G / G_0$ acts on $S_0$ and thus there is a holomorphic map $X \to S_0$ covering $p$.
    
    Assume that $g(F)$ is not contained in a fibre of $p$. Then $p(g(F))$ equals $S_0$ and the map $g(F)\to S_0$ induces a morphism on fundamental groups whose image has finite index. Since the morphism $G_0 =\pi_{1}(X_0)\to \pi_{1}(S_{0})$ can be written as a composition
    \[
        \begin{tikzcd}
            G_{0} \ar[r, "p_{\ast}"] & \pi_{1}^{orb}(S_{0}) \ar[r, "u"] & \pi_{1}(S_{0})
        \end{tikzcd}
    \]
    and since the above discussion shows that $(u\circ p_{\ast})(g \ker (p_{\ast})g^{-1})$ has finite index in $\pi_{1}(S_{0})$, then $H$ cannot be trivial. Thus $H$ has finite index in $\pi_{1}^{orb}(S_0)$. The equalities
    \begin{align*}
        \{1\} &=(v\circ p_{\ast})(\ker(p_{\ast}))\\
        &= \psi(\ker(p_{\ast}))\\
        &= \psi(g \ker(p_{\ast}) g^{-1})\\ 
        &= v(H)
    \end{align*}
    then imply that $\{1\}=v(H)< \Lambda$ has finite index, which is a contradiction since $\Lambda$ is infinite. Thus $g(F)$ must coincide with a fibre of $p$, and so $g$ maps any smooth fibre of $p$ to a fibre of $p$. This implies that $G / G_0$ acts on $S_0$ and there exists a holomorphic map $X \to S$, where $S$ is the quotient of $S_0$ by the finite group $G / G_0$.
    
    Let $\pi : X \to \Sigma$ be the Stein factorisation of the map $X\to S$. To see that $\psi$ factors through the morphism 
    \begin{equation}\label{eq:miplit}
    \pi_{\ast} : \pi_{1}(X)\to \pi_{1}^{orb}(\Sigma)
    \end{equation}
    induced by $\pi$, it is enough to prove that the fundamental group of a generic fibre of $\pi$, which generates $\ker(\pi_{\ast}$), maps trivially to $\Lambda$. To see this, consider a generic fibre $F\subset X_0$ of $p$ mapping to a generic fibre $F^\prime\subset X$ of $\pi$ under the covering map $X_0\to X$. The map $F\to F^\prime$ is a finite covering map and $\pi_{1}(F)$ maps trivially to $\Lambda$. Therefore the image of $\pi_{1}(F^\prime)$ in $\Lambda$ is a finite normal subgroup, which is then trivial by hypothesis. This proves that $\psi$ factors through the morphism~\eqref{eq:miplit}. The orbifold associated to the fibration $\pi : X \to \Sigma$ is hyperbolic because $\Lambda$ is not virtually abelian, thus $\psi$ factors through a component projection for the universal homomorphism for $G$. This completes the proof.
\end{proof}

We are now prepared to prove \Cref{cor:profinite-factorisations}. 

\begin{proof}[Proof of \Cref{cor:profinite-factorisations}]
    Consider a surjective homomorphism $\widehat{\psi} : \widehat{G}\to \widehat{\Sigma}$ to a profinite hyperbolic orbisurface group. Then, there is a finite index normal surface subgroup $\Sigma_0 \unlhd \Sigma$ with $b_1(\Sigma_0)>0$. Consider the subgroup $\widehat{\Sigma}_{0}\lhd \widehat{\Sigma}$ and its inverse image
    \[
        G_{0}\coloneq (\widehat{\psi}\circ \iota_G)^{-1}(\widehat{\Sigma}_{0}).
    \]
    The homomorphism $G_{0}\to \widehat{\Sigma}_{0}$ factors through a component of the universal homomorphism for $G_0$ by \Cref{lem:profinite-factorisations}. \Cref{lem:virtfactor} then implies that ${(\widehat{\psi}\circ \iota_{G}) : G \to \widehat{\Sigma}}$ factors through a component $\phi_i$ of the universal homomorphism for $G$. The uniqueness of this component, as well as all other remaining assertions,  are proved exactly as in \Cref{lem:profinite-factorisations}.\end{proof}

\subsection{The universal homomorphism is a profinite invariant}\label{subsec.unihom.proof2}

The purpose of this section is to prove \Cref{thm:profinite-univ-hom}, whose statement is now recalled.

\setcounter{thmx}{3}
\begin{thmx}\label{thm:profinite-univ-hom}
          Let $G$ and $H$ be K\"ahler groups,
     \begin{align*}
        \phi&=(\phi_1,\dots,\phi_r):G\longrightarrow \Gamma_1\times \dots\times \Gamma_r \\
        \psi &=(\psi_1,\dots ,\psi_s): H\longrightarrow \Sigma_1\times \dots\times \Sigma_s
     \end{align*}
     be their universal homomorphisms, and $\widehat{\phi}=(\widehat{\phi}_1,\dots,\widehat{\phi}_r), \widehat{\psi}=(\widehat{\psi}_1,\dots,\widehat{\psi}_s)$ be the induced homomorphisms on profinite completions. If there is an isomorphism $\Theta: \widehat{G}\to \widehat{H}$, then $r=s$ and, up to reordering factors, $\Gamma_i$ is isomorphic to $\Sigma_i$ for each $i\in \{1,\ldots , r\}$. Moreover, $\widehat{\psi}\circ\Theta=\widehat{\phi}$ after identifying the groups $\widehat{\Gamma}_i$ and $\widehat{\Sigma}_i$ through suitable isomorphisms of profinite orbisurface groups. In particular $\im(\widehat{\psi})\cong \im(\widehat{\phi})$.
\end{thmx}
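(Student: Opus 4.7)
The plan is to apply \Cref{cor:profinite-factorisations} to the surjective homomorphisms $\widehat{\psi}_j \circ \Theta : \widehat{G} \to \widehat{\Sigma}_j$ and $\widehat{\phi}_i \circ \Theta^{-1} : \widehat{H} \to \widehat{\Gamma}_i$ obtained by transporting the factors of $\widehat{\psi}$ and $\widehat{\phi}$ across the profinite isomorphism $\Theta$. Each target is a profinite hyperbolic orbisurface group, so the corollary provides, for every $j \in \{1, \ldots, s\}$, a unique index $\sigma(j) \in \{1, \ldots, r\}$ and a continuous homomorphism $\overline{u}_j : \widehat{\Gamma}_{\sigma(j)} \to \widehat{\Sigma}_j$ satisfying $\overline{u}_j \circ \widehat{\phi}_{\sigma(j)} = \widehat{\psi}_j \circ \Theta$; symmetrically, for every $i \in \{1, \ldots, r\}$ there is a unique $\tau(i) \in \{1, \ldots, s\}$ and a continuous homomorphism $\overline{v}_i : \widehat{\Sigma}_{\tau(i)} \to \widehat{\Gamma}_i$ with $\overline{v}_i \circ \widehat{\psi}_{\tau(i)} = \widehat{\phi}_i \circ \Theta^{-1}$. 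Each $\overline{u}_j$ and $\overline{v}_i$ is then automatically surjective.

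The next step is to show that $\sigma$ and $\tau$ are mutually inverse bijections, which in particular forces $r = s$. Setting $i = \sigma(j)$ in the second factorisation and composing with $\Theta$ on the right yields $\overline{v}_{\sigma(j)} \circ \widehat{\psi}_{\tau(\sigma(j))} \circ \Theta = \widehat{\phi}_{\sigma(j)}$, and feeding this back into the first factorisation gives
\[
    \widehat{\psi}_j = \overline{u}_j \circ \overline{v}_{\sigma(j)} \circ \widehat{\psi}_{\tau(\sigma(j))}.
\]
Viewing $\widehat{\psi}_j : \widehat{H} \to \widehat{\Sigma}_j$ as a surjective homomorphism to a profinite hyperbolic orbisurface group, the uniqueness clause of \Cref{cor:profinite-factorisations} applied to the trivial factorisation of $\widehat{\psi}_j$ through itself forces $\tau(\sigma(j)) = j$. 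The symmetric argument gives $\sigma(\tau(i)) = i$, so $\sigma$ and $\tau$ are mutually inverse; hence $r = s$ and, after relabeling the $\Sigma_j$, one may assume $\sigma = \tau = \mathrm{id}$.

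With this relabeling, the identity $\widehat{\psi}_j = \overline{u}_j \circ \overline{v}_j \circ \widehat{\psi}_j$ together with the surjectivity of $\widehat{\psi}_j$ gives $\overline{u}_j \circ \overline{v}_j = \mathrm{id}_{\widehat{\Sigma}_j}$, and symmetrically $\overline{v}_j \circ \overline{u}_j = \mathrm{id}_{\widehat{\Gamma}_j}$, so each $\overline{u}_j$ is a topological isomorphism of profinite hyperbolic orbisurface groups. The theorem of Bridson, Conder, and Reid recalled in the introduction --- that two hyperbolic orbisurface groups with isomorphic profinite completions are themselves isomorphic --- then yields $\Gamma_j \cong \Sigma_j$ as abstract groups. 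Using the $\overline{u}_j$ to identify $\widehat{\Gamma}_j$ with $\widehat{\Sigma}_j$, the factorwise equations $\overline{u}_j \circ \widehat{\phi}_j = \widehat{\psi}_j \circ \Theta$ assemble into $\widehat{\psi}\circ \Theta = \widehat{\phi}$, whence $\im(\widehat{\psi}) \cong \im(\widehat{\phi})$. The main obstacle is the index bookkeeping required to identify $\sigma$ and $\tau$ as mutual inverses; after this, the rigidity of orbisurface factors follows immediately from Bridson-Conder-Reid and the intertwining identity is a formal consequence of the factorisations produced by \Cref{cor:profinite-factorisations}.
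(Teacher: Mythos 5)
Your argument is correct and follows essentially the same route as the paper: apply \Cref{cor:profinite-factorisations} to the factors transported across $\Theta$ and $\Theta^{-1}$, use the uniqueness clause to show the resulting index maps are mutually inverse bijections (forcing $r=s$), deduce that the connecting maps are isomorphisms of profinite completions, and finish with Bridson--Conder--Reid. The only superficial difference is the bookkeeping: you track both index maps $\sigma$, $\tau$ symmetrically and establish $\overline{u}_j\circ\overline{v}_j=\mathrm{id}$ and $\overline{v}_j\circ\overline{u}_j=\mathrm{id}$ directly, whereas the paper shows $\ell_{k_i}=i$, deduces injectivity/surjectivity and invokes symmetry once, then gets $\widehat\beta_i$ injective (hence an isomorphism since it was already surjective) from a one-sided cancellation.
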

\begin{proof}
   If $\Lambda$ is a hyperbolic orbisurface group, then the homomorphisms $\widehat{\phi}$ and $\widehat{\psi}$ are universal with respect to surjective homomorphisms $\widehat{G}\cong\widehat{H}\to \widehat{\Lambda}$ by \Cref{cor:profinite-factorisations}. For every $i\in \left\{1,\ldots, r\right\}$, the universal property of $\widehat{\psi}$ implies that there is a unique $k_i\in \left\{1,\ldots, s\right\}$ such that there exists a factorisation $\widehat{\phi_i}\circ \Theta^{-1}=\widehat{\alpha}_{k_i}\circ \widehat{\psi}_{k_i}$ with $\widehat{\alpha}_{k_i}:\widehat{\Sigma}_{k_i}\to \widehat{\Gamma}_i$ surjective.  Similarly, the universal property of $\widehat{\phi}$ implies that there is a unique $\ell_{k_i}\in \left\{1,\ldots, r\right\}$ and an epimorphism $\widehat{\beta}_{\ell_{k_i}}:\widehat{\Gamma}_{\ell_{k_i}}\to \widehat{\Sigma}_{k_i}$ such that $\widehat{\psi}_{k_i}\circ \Theta=\widehat{\beta}_{\ell_{k_i}}\circ \widehat{\phi}_{\ell_{k_i}}$. This produces a commutative diagram 
    \[
        \begin{tikzcd}
            \widehat{G} \arrow[r, "\widehat{\phi}_{\ell_{k_i}}"] \arrow[d, "\widehat{\phi}_i" left] & \widehat{\Gamma}_{\ell_{k_i}} \ar[d, "\widehat{\alpha}_{k_i}\circ \widehat{\beta}_{\ell_{k_i}}"] \\
            \widehat{\Gamma}_i\arrow[r, "{\rm id}"] & \widehat{\Gamma}_i
        \end{tikzcd}
    \]
    of epimorphisms. Uniqueness in \Cref{cor:profinite-factorisations} implies that $\ell_{k_i}=i$. This proves that the map $i\mapsto k_i$ is injective and the map $k\mapsto l_k$ is surjective. Exchanging the roles of $G$ and $H$ proves that both maps are bijections, hence $r=s$. Thus after reordering factors we can assume that $k_i=i$ and $\ell_i=i$ ($1\le i \le r$). The arguments above then yield $\widehat{\phi}_i=\widehat{\alpha}_i \circ \widehat{\beta}_i\circ \widehat{\phi}_{i}$. Thus $\widehat{\beta}_{i}$ is injective, hence it is an isomorphism. We then have
    \[
        \widehat{\psi}\circ \Theta=(\widehat{\beta_{1}}, \ldots , \widehat{\beta}_{r})\circ \widehat{\phi}
    \]
    as desired. Finally, since $\Gamma_i$ and $\Sigma_i$ are Fuchsian groups, it follows from \cite[Thm.\ 1.1]{BriConRei-16} that $\Gamma_i\cong \Sigma_i$ for each $1\leq i \leq r$. 
\end{proof}

\section{Profinite rigidity results for K\"ahler subgroups of direct products}\label{sec.rigidity.subdirect}

This section studies the profinite genus of a K\"ahler subgroup of a direct product of hyperbolic orbisurface groups. The first result is that being a full subdirect product of orbisurface groups is a profinite invariant (\Cref{thm:SDPS-profinite-rigidity-among-KGs}). This is then used to establish profinite rigidity amongst residually finite K\"ahler groups for direct products of orbisurface groups and their co-$\Z^2$ K\"ahler subgroups.

\subsection{Preliminaries}

Here we recall general results on residually free groups and separability that are needed in the next subsections.

\begin{defn}
    A subgroup $H\leq G$ is \emph{separable} if it is the intersection of a family of finite index subgroups of $G$.
\end{defn}

It follows from the definition of the profinite topology that a subgroup ${H\leq G}$ is separable if and only if it is closed in the profinite topology. Moreover, a group is residually finite if and only if the trivial subgroup is separable. A crucial result in this paper is the following theorem of Bridson and Wilton \cite[Thm.\ B]{BriWil-08}, where the second statement follows from applying \cite[Lem.\ 4.6]{Rei-15} to their result.

\begin{thm}\label{thm:Bridson-Wilton}
    Let $G$ be a finitely generated residually free group. Then all finitely presented subgroups of $G$ are separable. In particular, the inclusion $H\hookrightarrow G$ of a finitely presented subgroup induces an isomorphism between $\widehat{H}$ and the closure $\overline{H}$ of the image of $H$ in $\widehat{G}$.
\end{thm}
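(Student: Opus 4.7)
My plan is to establish separability of $H$ in $G$ by reducing to a direct product of limit groups, extracting a structural dichotomy for $H$, and combining it with known separability of the factors. The first step is to invoke the Baumslag--Myasnikov--Remeslennikov embedding theorem to realise the finitely generated residually free group $G$ as a subgroup of a finite direct product $L_{1}\times\cdots\times L_{n}$ of limit groups. It then suffices to prove separability of every finitely presented subgroup $H \leq L_{1}\times\cdots\times L_{n}$, since intersecting separating finite-index subgroups of the ambient product with $G$ yields separating finite-index subgroups of $G$.

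Next, after projecting away factors that $H$ intersects trivially, I would arrange $H$ to sit as a full subdirect product inside some $L_{i_{1}}\times\cdots\times L_{i_{m}}$, so that \Cref{thm:SubdirProds} endows $H$ with the VSP property: each $p_{j,k}(H)$ has finite index in $L_{i_{j}}\times L_{i_{k}}$. Combined with Wilton's theorem that limit groups are LERF, and with the fact that products of residually finite groups are residually finite, this yields a rich supply of finite quotients of the ambient product in which the image of $H$ is closed.

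The decisive step is then separability itself. Given $g \notin H$, the favourable case is when some pair projection satisfies $p_{j,k}(g) \notin p_{j,k}(H)$: since $p_{j,k}(H)$ is a finite-index subgroup of the residually finite group $L_{i_{j}}\times L_{i_{k}}$, the normal core of $p_{j,k}(H)$ gives a finite quotient separating $p_{j,k}(g)$ from $p_{j,k}(H)$, and pullback through $p_{j,k}$ separates $g$ from $H$ in the full product. The main obstacle, which I expect to be the delicate point, is to handle ``diagonal'' elements $g \notin H$ with $p_{j,k}(g) \in p_{j,k}(H)$ for every pair $j<k$; resolving this case will likely require an induction on the number of factors together with control of the kernel of the total pair-projection, plausibly via a fibre product description of $H$ and nilpotent quotient arguments in the style of Bridson--Grunewald.

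For the ``in particular'' clause, I would note that every finite-index subgroup of the finitely presented group $H$ is itself finitely presented, hence separable in $G$ by what has just been established, so every finite-index subgroup of $H$ is the intersection of $H$ with a finite-index subgroup of $G$. This is precisely the hypothesis of \cite[Lem.\ 4.6]{Rei-15}, which then yields the required isomorphism $\widehat{H}\cong\overline{H}$.
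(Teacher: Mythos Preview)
The paper does not prove this statement: it simply quotes it as \cite[Thm.\ B]{BriWil-08} and obtains the ``in particular'' clause by applying \cite[Lem.\ 4.6]{Rei-15}. So there is no proof in the paper to compare against; your proposal is an attempt to reprove a black-box citation.

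On the substance of your sketch, two points. First, you invoke \Cref{thm:SubdirProds} in the wrong direction: that theorem says VSP implies finitely presented, not the reverse. The converse (finitely presented full subdirect products of limit groups satisfy VSP) is true, and is mentioned in the paragraph following \Cref{thm:SubdirProds}, but it is a separate (and harder) result of \cite{BHMS-13}, not \Cref{thm:SubdirProds} itself. Second, you correctly identify that the ``diagonal'' case --- elements $g\notin H$ whose image under every pair projection lies in $p_{j,k}(H)$ --- is the crux, but you do not resolve it. Your suggested tools (induction on factors, fibre-product descriptions, nilpotent quotient arguments) are indeed the flavour of the Bridson--Wilton argument, but the actual proof requires substantial machinery (in particular, virtual retraction properties of limit groups from Wilton's work and the precise structure theory of \cite{BHMS-13}), and as written your sketch is not a proof.

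Your handling of the ``in particular'' clause is correct and matches the paper's one-line deduction from \cite[Lem.\ 4.6]{Rei-15}.
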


The next result follows from a straightforward adaptation of the proof of an analogous result for the finitely generated case \cite[Proof of Lem.\ 1.1]{Sco-78}. A simple topological proof is given for completeness.

\begin{lemma}\label{lem:virtual-separability}
    Let $G$ be a group with a finite index subgroup $H<G$ such that every finitely presented subgroup of $H$ is separable in $H$. Then every finitely presented subgroup of $G$ is separable in $G$.
\end{lemma}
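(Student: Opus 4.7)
The plan is to work directly in the profinite topology on $G$, where separability of a subgroup is equivalent to being closed and where left and right multiplications are homeomorphisms (so $G$ is a topological group). The argument rests on three elementary observations: (i) any finite index subgroup of $G$ is closed in the profinite topology on $G$, because it contains its normal core, which has finite index; (ii) if $H \le G$ has finite index, then the profinite topology on $H$ as an abstract group coincides with the subspace topology inherited from $G$, since every finite index subgroup of $H$ contains the $G$-normal core of itself, which is still of finite index in $G$; and (iii) finite unions of translates of closed subsets are closed.

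Now I would let $K \le G$ be an arbitrary finitely presented subgroup and set $K_{0} \coloneq K \cap H$. Since $[K : K_0] \le [G : H] < \infty$, the subgroup $K_0$ has finite index in the finitely presented group $K$, and is therefore itself finitely presented. The hypothesis applied to $H$ then gives that $K_0$ is separable in $H$, i.e.\ closed in the profinite topology on $H$; by observation (ii), $K_0$ is therefore closed in the subspace topology on $H$ coming from $G$, and combined with observation (i) it follows that $K_0$ is closed in $G$.

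To finish, I would decompose $K$ as a finite disjoint union of right cosets
\[
    K \;=\; \bigsqcup_{j=1}^{n}\, K_{0}\,k_{j}
\]
for coset representatives $k_{1},\dots,k_{n}\in K$ of $K_{0}$ in $K$. By observation (iii), each coset is closed in $G$ as a translate of the closed subgroup $K_{0}$, and hence their finite union $K$ is closed; equivalently, $K$ is separable in $G$. The only mildly delicate point is the identification of topologies in (ii), but this reduces to the standard fact that every finite index subgroup of $H$ contains a subgroup that is normal and of finite index in $G$, so I do not expect any real obstacle.
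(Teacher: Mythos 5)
Your proof is correct and follows essentially the same route as the paper's: intersect the finitely presented subgroup with $H$, note the intersection is still finitely presented and thus separable in $H$, promote closedness from $H$ to $G$ (where the paper simply observes that a finite-index subgroup of $H$ is also a finite-index subgroup of $G$, in place of your slightly more elaborate subspace-topology argument), and finish with a finite union of cosets. The extra care you take in observation (ii) is sound but not strictly necessary, since separability in $H$ transfers to $G$ immediately once one notes that finite-index subgroups of $H$ have finite index in $G$.
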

\begin{proof} 
    Suppose $L < G$ is finitely presented. Then $L \cap H$ is a finite index subgroup of $L$, and thus is also finitely presented. In particular, $L \cap H$ is closed in the profinite topology on $H$ and hence is closed in the profinite topology on $G$ by definition. Then $L$ is a finite union of cosets of $L \cap H$, and therefore is also closed.
\end{proof}

The following result can be viewed as a version of Grothendieck rigidity for finitely presented groups.

\begin{lemma}\label{lem:Grothendieck-rigidity}
    Let $G$ be a group with the property that every finitely presented subgroup is separable, and suppose that $H$ is finitely presented and residually finite. If there is a homomorphism $\phi: H \to G$ that induces an isomorphism on profinite completions, then $\phi$ is an isomorphism.
\end{lemma}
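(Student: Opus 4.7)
The plan is to prove injectivity and surjectivity separately.

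For injectivity, the idea is to exploit residual finiteness of $H$. If $h \in \ker(\phi)$, then $\iota_G(\phi(h)) = 1$, and since $\widehat{\phi} \circ \iota_H = \iota_G \circ \phi$ and $\widehat{\phi}$ is an isomorphism, this forces $\iota_H(h) = 1$. Residual finiteness of $H$ gives $h = 1$. Thus $\phi$ embeds $H$ as a finitely presented subgroup of $G$, which by hypothesis is separable.

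For surjectivity, I would argue by contradiction. Suppose there exists $g \in G \setminus \phi(H)$. Since $\phi(H)$ is separable (i.e., closed in the profinite topology on $G$), there is a surjective homomorphism $q : G \to Q$ to a finite group $Q$ such that $q(g) \notin q(\phi(H))$. Composing with $\phi$, I obtain a homomorphism $q \circ \phi : H \to Q$, whose extension to profinite completions is $\widehat{q} \circ \widehat{\phi}$. Since $\widehat{\phi}$ is an isomorphism and $q$ is surjective, the image of $\widehat{q} \circ \widehat{\phi}$ is all of $Q$. On the other hand, because $\iota_H(H)$ is dense in $\widehat{H}$ and $Q$ is finite (hence discrete), the image of $q \circ \phi$ already equals the image of $\widehat{q} \circ \widehat{\phi}$, so $q(\phi(H)) = Q$. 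This contradicts $q(g) \notin q(\phi(H))$, and so no such $g$ exists.

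There is no substantive obstacle: this is a variant of the classical Grothendieck rigidity argument, and the only subtle point is verifying that a finite quotient of $G$ separating $g$ from $\phi(H)$ can be upgraded, via the isomorphism $\widehat{\phi}$, to a statement about the image of $q \circ \phi$. That step uses only the universal property of profinite completions together with finiteness of $Q$.
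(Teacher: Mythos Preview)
Your proof is correct and follows essentially the same strategy as the paper's. The paper proves injectivity via the same commutative diagram argument (noting along the way that $G$ is residually finite, though as you observe residual finiteness of $H$ alone suffices), and for surjectivity it simply cites \cite[Lem.\ 2.5]{LonRei-11}; you have unpacked precisely that lemma's content in your second paragraph.
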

\begin{proof}
    First observe that $\phi$ is injective. Indeed, first $G$ is residually finite by hypothesis since the trivial group is finitely presented. Thus there is a commutative diagram
    \[
        \begin{tikzcd}
            H \arrow[r, "\phi"] \arrow[d, "\iota_{H}" left] & G \arrow[d, "\iota_G"]  \\
            \widehat{H} \arrow[r, "\widehat{\phi}"] & \widehat{G}
        \end{tikzcd}
    \]
    with the vertical homomorphisms injective and the bottom homomorphism an isomorphism, hence $\phi$ is injective as claimed. Therefore $H\cong \phi(H)<G$ is a finitely presented subgroup, and thus $\phi(H)$ is separable in $G$ by hypothesis. It then follows from \cite[Lem.\ 2.5]{LonRei-11} that $\phi(H)=G$.
\end{proof}

Since direct products of hyperbolic orbisurface groups are virtually residually free, the following is a direct consequence of the previous results in this section.

\begin{corollary}\label{cor:Bridson-Wilton}
    Let $G$ be a direct product of finitely many hyperbolic orbisurfaces and $H<G$ be a finitely presented subgroup. Then $H$ is separable in $G$. In particular, the inclusion $H\hookrightarrow G$ induces an embedding $\widehat{H}\hookrightarrow \widehat{G}$ of profinite completions for which $\widehat{H} = \widehat{G}$ if and only if $H = G$.
\end{corollary}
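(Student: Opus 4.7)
The plan is to reduce everything to \Cref{thm:Bridson-Wilton} (Bridson--Wilton) via the fact that $G$ is virtually residually free. Indeed, each hyperbolic orbisurface group $\Gamma_i$ contains a torsion-free finite index subgroup $\Lambda_i$, which is a closed hyperbolic surface group and hence residually free. The direct product $G' = \Lambda_1 \times \cdots \times \Lambda_r$ is then a finite index subgroup of $G = \Gamma_1 \times \cdots \times \Gamma_r$ that is residually free, since a direct product of finitely many residually free groups is residually free.

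First I would apply \Cref{thm:Bridson-Wilton} to $G'$ to conclude that every finitely presented subgroup of $G'$ is separable in $G'$. Then \Cref{lem:virtual-separability} immediately transfers this to $G$, establishing that every finitely presented subgroup of $G$, and in particular $H$ itself, is separable in $G$. This handles the first assertion.

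For the profinite statement, since $H$ is finitely presented, every finite index subgroup of $H$ is also finitely presented (as a finite index subgroup of a finitely presented group), and thus separable in $G$ by the first assertion. Exactly as in the proof of \Cref{thm:Bridson-Wilton}, applying \cite[Lem.~4.6]{Rei-15} to this data shows that the canonical map $\widehat{H} \to \widehat{G}$ is injective with image equal to the closure $\overline{H}$ of $H$ in $\widehat{G}$, giving the desired embedding $\widehat{H} \hookrightarrow \widehat{G}$.

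For the final iff clause, the forward implication $H = G \Rightarrow \widehat{H} = \widehat{G}$ is trivial. Conversely, if the induced map $\widehat{H} \to \widehat{G}$ is an isomorphism, I would invoke \Cref{lem:Grothendieck-rigidity} applied to the inclusion $\phi \colon H \hookrightarrow G$: the hypotheses are satisfied because $G$ is residually finite (as each $\Gamma_i$ is), hence so is $H$, and every finitely presented subgroup of $G$ is separable by the first part just proved. The conclusion of that lemma then yields $H = G$. No serious obstacle is anticipated; the proof is a clean assembly of the cited results, with the only structural input being the observation that $G$ is virtually residually free.
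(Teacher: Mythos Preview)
Your proposal is correct and follows exactly the approach the paper intends: the paper simply remarks that direct products of hyperbolic orbisurface groups are virtually residually free and states the corollary as a direct consequence of \Cref{thm:Bridson-Wilton}, \Cref{lem:virtual-separability}, and \Cref{lem:Grothendieck-rigidity}, which is precisely the chain of reductions you spell out. Your unpacking of the embedding step via \cite[Lem.~4.6]{Rei-15} and the final iff via \Cref{lem:Grothendieck-rigidity} is exactly what the paper leaves implicit.
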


\subsection{Virtual direct products}

The previous results in this section are now used to prove profinite rigidity results for K\"ahler subgroups of direct products of orbisurface groups.

\begin{thm}\label{thm:SDPS-profinite-rigidity-among-KGs}
    Let $G$ and $H$ be residually finite K\"ahler groups for which there is an isomorphism $\Theta:\widehat{G}\to \widehat{H}$. If
    \[
        G< \Gamma \coloneq \Gamma_1\times \cdots \times \Gamma_r
    \]
    is a full subdirect product of hyperbolic orbisurface groups $\Gamma_i$, $1\leq i \leq r$, then there is an embedding $H<\Gamma$ as a full subdirect product. Furthermore, there is an automorphism $\Xi$ of $\widehat{\Gamma}$ consisting of a reordering of the factors of $\widehat{\Gamma}$ followed by automorphisms of the $\widehat{\Gamma}_i$ such that $\Xi(\overline{G})=\overline{H}$, where the closures of $G$ and $H$ are taken inside $\widehat{\Gamma}$. Finally, $\overline{G}$ (resp.\ $\overline{H}$) is isomorphic to $\widehat{G}$ (resp.\ $\widehat{H}$). 

\end{thm}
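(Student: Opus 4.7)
The proof will assemble three ingredients: \Cref{lem:fsd-inclusion-is-univ-hom}, which identifies the inclusion $G\hookrightarrow \Gamma$ with the universal homomorphism of $G$; \Cref{thm:profinite-univ-hom}, which transports this structure from $G$ to $H$ via $\Theta$; and \Cref{cor:Bridson-Wilton}, which yields separability of finitely presented subgroups of direct products of hyperbolic orbisurface groups. Since $G$ is K\"ahler and hence finitely presented, and since the inclusion $\phi : G \hookrightarrow \Gamma$ is the universal homomorphism of $G$ by \Cref{lem:fsd-inclusion-is-univ-hom}, \Cref{cor:Bridson-Wilton} already gives that $\widehat{\phi} : \widehat{G} \to \widehat{\Gamma}$ is injective with image $\overline{G}$. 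In particular $\overline{G}\cong \widehat{G}$.

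I would then apply \Cref{thm:profinite-univ-hom} to $\Theta$. This produces the universal homomorphism $\psi : H \to \Sigma_1\times \cdots \times \Sigma_r$ of $H$ (with the same number of factors, and with $\Sigma_i\cong \Gamma_i$ after an appropriate reordering) together with isomorphisms $\widehat{\Sigma}_i\cong \widehat{\Gamma}_i$ such that $\widehat{\psi}\circ \Theta = \widehat{\phi}$ under these identifications. Since $\widehat{\phi}$ is injective, so is $\widehat{\psi}$. Naturality of profinite completion gives $\widehat{\psi}\circ \iota_H = \iota_{\Sigma_1\times\cdots\times\Sigma_r} \circ \psi$, and residual finiteness of $H$ makes $\iota_H$ injective; hence $\psi$ itself must be injective. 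Fixing an isomorphism $\Sigma_1\times \cdots \times \Sigma_r \cong \Gamma$ coming from discrete orbisurface isomorphisms $\Sigma_i\cong \Gamma_i$ (which exist by the Bridson--Conder--Reid theorem cited in the proof of \Cref{thm:profinite-univ-hom}) and the corresponding reordering, $\psi$ becomes an embedding $H\hookrightarrow \Gamma$. Its image is full subdirect by \Cref{thm:UnivHom}(2), and a further application of \Cref{cor:Bridson-Wilton} delivers $\overline{H}\cong \widehat{H}$.

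To construct $\Xi$, I would compose the fixed identification $\Sigma_1\times \cdots \times \Sigma_r \cong \Gamma$ with the (possibly different) identification of profinite orbisurface groups provided by \Cref{thm:profinite-univ-hom}. The result is an automorphism $\Xi$ of $\widehat{\Gamma}$ of the desired form---a reordering of factors followed by automorphisms of each $\widehat{\Gamma}_i$---satisfying $\widehat{\psi}\circ \Theta = \Xi \circ \widehat{\phi}$ as maps $\widehat{G}\to \widehat{\Gamma}$. Evaluating both sides at $\widehat{G}$ then gives
\[
    \overline{H} \;=\; \widehat{\psi}(\widehat{H}) \;=\; \widehat{\psi}(\Theta(\widehat{G})) \;=\; \Xi(\widehat{\phi}(\widehat{G})) \;=\; \Xi(\overline{G}).
\]
The main obstacle I anticipate is the careful bookkeeping: \Cref{thm:profinite-univ-hom} supplies reorderings and orbisurface isomorphisms at the level of profinite completions, whereas one ultimately needs to embed the discrete group $H$ into $\Gamma$, so one must match up the two sets of identifications and absorb any remaining discrepancy into $\Xi$. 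Once this is handled, injectivity of $\psi$ and the equality $\Xi(\overline{G}) = \overline{H}$ follow directly from the profinite invariance of the universal homomorphism together with Bridson--Wilton separability.
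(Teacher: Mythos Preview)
Your proposal is correct and follows essentially the same route as the paper: both arguments combine \Cref{lem:fsd-inclusion-is-univ-hom}, \Cref{thm:profinite-univ-hom}, \Cref{cor:Bridson-Wilton}, and \Cref{thm:UnivHom}(2) in the same order to obtain the embedding of $H$, the full subdirect property, and the equality $\Xi(\overline{G})=\overline{H}$. Your extra paragraph distinguishing the discrete isomorphisms $\Sigma_i\cong\Gamma_i$ from the profinite ones and absorbing the discrepancy into $\Xi$ is exactly the bookkeeping the paper handles implicitly by declaring that ``the universal homomorphism $\psi$ for $H$ has target $\Gamma$''.
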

\begin{proof}
    The inclusion $\phi: G\hookrightarrow \Gamma_1\times \cdots \times \Gamma_r$ is the universal homomorphism for $G$ by \Cref{lem:fsd-inclusion-is-univ-hom}. \Cref{thm:profinite-univ-hom} then implies that, after possibly reordering factors, the universal homomorphism $\psi$ for $H$ has target $\Gamma$ and there is a factor-preserving automorphism $\Xi$ of $\widehat{\Gamma}$ inducing a commutative diagram:
    \[
        \begin{tikzcd}
            \widehat{\Gamma}\arrow[r, "\Xi"] & \widehat{\Gamma}\\
            \widehat{G}\arrow[r, "\Theta"] \arrow[u, "\widehat{\phi}"] & \widehat{H}\arrow[u, "\widehat{\psi}" right]
        \end{tikzcd}
    \]
    \Cref{cor:Bridson-Wilton} implies that $\widehat{\phi}$ is injective with image $\overline{G}$. Thus $\widehat{\psi}= \Xi\circ \widehat{\phi}\circ \Theta^{-1}$ is injective and thus so is $\psi$. Moreover, the closure of $H\cong \psi(H)$ is isomorphic to $\widehat{H}$ by \Cref{cor:Bridson-Wilton} again and it coincides with $\Xi(\overline{G})$. Finally, \Cref{thm:UnivHom} implies that the image of $\psi$ is full subdirect, and this completes the proof.
\end{proof}

A direct consequence of \Cref{thm:SDPS-profinite-rigidity-among-KGs} is profinite rigidity of direct products of hyperbolic orbisurface groups among residually finite K\"ahler groups. Note that direct products of hyperbolic orbisurface groups are indeed K\"ahler groups; this follows for instance from a trick due to Koll\'ar~\cite[p.\ 7]{abckt}.

\begin{corollary}\label{cor:rigidity-of-direct-products}
    Let $G=\Gamma_1\times \cdots \times \Gamma_r$ be a direct product of hyperbolic orbisurface groups and $H$ be a residually finite K\"ahler group such that there is an isomorphism $\Theta: \widehat{G}\to \widehat{H}$. Then $H\cong \Gamma_1\times \cdots\times \Gamma_r$.
\end{corollary}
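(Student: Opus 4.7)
The plan is to deduce this corollary almost immediately from Theorem~\ref{thm:SDPS-profinite-rigidity-among-KGs} (profinite rigidity of full subdirect products of orbisurface groups among residually finite K\"ahler groups) combined with the separability statement of Corollary~\ref{cor:Bridson-Wilton}. The point is that the hypothesis here is the extreme case where the ``ambient'' full subdirect product is already the whole direct product, so profinite genus reduces to a density-plus-separability question.

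Concretely, the first step is to observe that $G$ is trivially a full subdirect product of itself, so Theorem~\ref{thm:SDPS-profinite-rigidity-among-KGs} applies with $\Gamma := \Gamma_1 \times \cdots \times \Gamma_r$. That theorem produces an embedding $H \hookrightarrow \Gamma$ realising $H$ as a full subdirect product, together with a factor-permuting automorphism $\Xi$ of $\widehat{\Gamma}$ satisfying $\Xi(\overline{G}) = \overline{H}$, where closures are taken inside $\widehat{\Gamma}$. The second step is to exploit that $G = \Gamma$, which immediately gives $\overline{G} = \widehat{\Gamma}$; applying $\Xi$ then yields $\overline{H} = \widehat{\Gamma}$, so $H$ is dense in $\widehat{\Gamma}$.

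The final step is separability. Since $H$ is a K\"ahler group, it is finitely presented, so Corollary~\ref{cor:Bridson-Wilton} applies to the finitely presented subgroup $H \leq \Gamma$: the inclusion induces an embedding $\widehat{H} \hookrightarrow \widehat{\Gamma}$ whose image is precisely $\overline{H}$, and $\overline{H} = \widehat{\Gamma}$ forces $H = \Gamma$. This yields the desired isomorphism $H \cong \Gamma_1 \times \cdots \times \Gamma_r$.

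There is essentially no obstacle to overcome, since all the heavy lifting has already been absorbed into the two results invoked: profinite rigidity of the universal homomorphism (Theorem~\ref{thm:profinite-univ-hom}), which powers Theorem~\ref{thm:SDPS-profinite-rigidity-among-KGs}, and the Bridson--Wilton separability theorem for finitely presented subgroups of virtually residually free groups, which powers Corollary~\ref{cor:Bridson-Wilton}. The only subtlety worth flagging is the tacit use of the fact that K\"ahler groups are finitely presented, without which Corollary~\ref{cor:Bridson-Wilton} could not be applied to conclude $H = \Gamma$.
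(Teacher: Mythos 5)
Your proof is correct and follows exactly the paper's argument: apply Theorem~\ref{thm:SDPS-profinite-rigidity-among-KGs} to realise $H$ as a full subdirect product of $\Gamma_1\times\cdots\times\Gamma_r$ with dense image in the profinite completion, then invoke Corollary~\ref{cor:Bridson-Wilton} together with finite presentability of K\"ahler groups to conclude $H=\Gamma_1\times\cdots\times\Gamma_r$. Your explicit unpacking of $\overline{G}=\widehat{\Gamma}$ and $\overline{H}=\Xi(\overline{G})=\widehat{\Gamma}$ is what the paper compresses into the phrase ``inducing an isomorphism of profinite completions.''
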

\begin{proof}
    According to \Cref{thm:SDPS-profinite-rigidity-among-KGs}, there is an embedding $\psi :H\hookrightarrow \Gamma_1\times \cdots \times \Gamma_r$ as a finitely presented full subdirect product inducing an isomorphism of profinite completions. Since K\"ahler groups are finitely presented, \Cref{cor:Bridson-Wilton} implies that $\psi$ is an isomorphism.
\end{proof}

As discussed in the introduction, general K\"ahler groups are not uniquely determined by their profinite completion. This is even false for subgroups of direct products of surface groups. Indeed, Bessa, Grunewald, and Zalesskii constructed nonisomorphic finite index subgroups $P_1, P_2 \leq \Gamma_1\times \Gamma_2$ of a direct product of two hyperbolic orbisurface groups with $\widehat{P}_1\cong \widehat{P}_2$ \cite[Ex.\ 3.14]{BesGruZal-14}. Since these groups are finite index in a K\"ahler group, they are K\"ahler. However, their examples do have finite $\mathcal{K}$-profinite genus.

\begin{corollary}\label{cor:rigidity-of-direct-prods-finite-index}
    Let $\Gamma=\Gamma_1\times \cdots \times \Gamma_r$ be a direct product of hyperbolic orbisurface groups. If $G$ is a finite index subgroup of $\Gamma$, then $\calg_\calk(G)$ is finite.
\end{corollary}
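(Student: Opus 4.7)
The plan is to apply Theorem \ref{thm:SDPS-profinite-rigidity-among-KGs} once we reduce to $G$ being a full subdirect product, and then to use separability together with the classical fact that a topologically finitely generated profinite group contains only finitely many open subgroups of any given finite index. First, I would observe that each projection $p_i(G) \le \Gamma_i$ has finite index (bounded by $[\Gamma:G]$, since the surjection $\Gamma\to \Gamma_i/p_i(G)$ factors through $\Gamma/G$), and that a finite-index subgroup of a hyperbolic orbisurface group is again a hyperbolic orbisurface group. Replacing each $\Gamma_i$ by $p_i(G)$, one may therefore assume $G$ is a full subdirect product of hyperbolic orbisurface groups, still of finite index in the (new) ambient $\Gamma$; in particular, $\overline{G}$ is an open subgroup of $\widehat{\Gamma}$ of index $[\Gamma:G]$.

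Given $H \in \calg_\calk(G)$, Theorem \ref{thm:SDPS-profinite-rigidity-among-KGs} furnishes an embedding $\psi : H \hookrightarrow \Gamma$ as a full subdirect product, together with a factor-preserving automorphism $\Xi$ of $\widehat{\Gamma}$ such that $\Xi(\overline{G}) = \overline{\psi(H)}$. Since $\psi(H)$ is finitely presented, Corollary \ref{cor:Bridson-Wilton} shows that $\psi(H)$ is separable in $\Gamma$, and hence
\[
    \psi(H) \;=\; \overline{\psi(H)} \cap \Gamma \;=\; \Xi(\overline{G}) \cap \Gamma.
\]
Thus the subgroup $\psi(H) \le \Gamma$, and in particular the isomorphism class of $H$, is completely determined by the open subgroup $\Xi(\overline{G}) \le \widehat{\Gamma}$, which has the same finite index $[\Gamma:G]$ as $\overline{G}$.

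To finish, since $\widehat{\Gamma}$ is topologically finitely generated, it contains only finitely many open subgroups of any given finite index (these correspond to transitive actions on a finite set, of which there are only finitely many for a finitely generated group). There are therefore only finitely many candidates for $\Xi(\overline{G})$, hence only finitely many candidates for $\psi(H)$, and thus $|\calg_\calk(G)| < \infty$. The only genuinely conceptual step in the argument is the rigid passage from the profinite datum $\Xi(\overline{G}) \le \widehat{\Gamma}$ to the concrete subgroup $\psi(H) = \Xi(\overline{G})\cap \Gamma$ of $\Gamma$, which rests on the separability of finitely presented subgroups of direct products of hyperbolic orbisurface groups (Corollary \ref{cor:Bridson-Wilton}); everything else is bookkeeping together with a standard counting fact for topologically finitely generated profinite groups.
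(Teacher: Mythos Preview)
Your proof is correct and follows essentially the same approach as the paper: both reduce to the full subdirect case, invoke \Cref{thm:SDPS-profinite-rigidity-among-KGs} to obtain an embedding of $H$ into $\Gamma$ whose closure has fixed finite index, and then conclude by the finiteness of subgroups of given index. The only cosmetic difference is that you count open subgroups of $\widehat{\Gamma}$ of index $[\Gamma:G]$ while the paper counts subgroups of $\Gamma$ of that index, which is equivalent via the standard correspondence.
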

\begin{proof}
    Suppose $H\in\calg_\calk(G)$. Since $G < \Gamma$ has finite index, by projecting onto a smaller direct product and replacing the $\Gamma_i$ by suitable finite index subgroups if necessary, we can assume that $G$ is a full subdirect product. \Cref{thm:SDPS-profinite-rigidity-among-KGs} provides an embedding $H \hookrightarrow \Gamma$ and an automorphism $\Xi$ of $\widehat{\Gamma}$ such that $\overline H=\Xi (\overline G)$. Both $\widehat{G} \cong \overline{G}$ and $\widehat{H} \cong \overline{H}$ then have finite index in $\widehat{\Gamma}$, and the natural bijection between finite index subgroups of a group and those of its profinite completion gives
    \[
        |\Gamma : G| = |\widehat{\Gamma} : \widehat{G}| = |\widehat{\Gamma} : \widehat{H}| = |\Gamma : H|,
    \]
    where the last equality uses the fact that $H$ is separable in $\Gamma$ by \Cref{cor:Bridson-Wilton}. Since $\Gamma$ is finitely generated, it has only finitely many subgroups of index $|\Gamma:G|$, and therefore there are only finitely many possibilities for $H$.
\end{proof}

\subsection{Co-$\mathbb{Z}^2$ subgroups}

We now address the $\mathcal{K}$-profinite genus of K\"ahler subdirect products of hyperbolic orbisurface groups that are virtually co-$\Z^2$.

\begin{prop}\label{prop:iso-of-ses}
    Suppose $G,H$ are finitely presented full subdirect products of a direct product $\Gamma_1\times \cdots \times \Gamma_r$ of hyperbolic orbisurface groups such that there are surjective homomorphisms $f_i: \Gamma_i \to \ZZ^{n}$ and $g_i: \Gamma_i \to \ZZ^{n}$ with $\ker\left(\sum_{i=1}^r f_i\right) = G$ and $\ker\left(\sum_{i=1}^r g_i\right) = H$. Then there is an isomorphism ${\phi: G \to H}$ if and only if, up to permuting factors of $\Gamma_1\times \cdots \times \Gamma_r$, there are isomorphisms $\phi_i: \Gamma_i \to \Gamma_i$ and $\psi:\ZZ^{n} \to \ZZ^{n}$ such that $\psi \circ f_i = g_i \circ \phi_i$.
\end{prop}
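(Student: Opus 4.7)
The $(\Leftarrow)$ direction is immediate. Suppose that, after relabeling factors, we are given isomorphisms $\phi_i\colon\Gamma_i\to\Gamma_i$ and $\psi\colon\ZZ^n\to\ZZ^n$ with $\psi\circ f_i=g_i\circ\phi_i$, and set $\Phi=\phi_1\times\cdots\times\phi_r$. Then for $(\gamma_1,\ldots,\gamma_r)\in G$ one has
\[
    \sum_i g_i(\phi_i(\gamma_i)) \;=\; \psi\Bigl(\sum_i f_i(\gamma_i)\Bigr) \;=\; 0,
\]
so $\Phi$ restricts to a homomorphism $G\to H$; symmetrically $\Phi^{-1}$ restricts to a homomorphism $H\to G$, and the two are mutual inverses.

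For the $(\Rightarrow)$ direction, suppose $\phi\colon G\to H$ is an isomorphism. First I verify that $G$ and $H$ are full subdirect products. Subdirectness of $G$: given $\gamma\in\Gamma_i$, surjectivity of some $f_j$ with $j\neq i$ lets me pick $\gamma'\in\Gamma_j$ with $f_j(\gamma')=-f_i(\gamma)$, producing an element of $G$ projecting to $\gamma$. Fullness: $G\cap\Gamma_i=\ker f_i$ is nontrivial because $\Gamma_i$ is hyperbolic hence nonabelian, so $[\Gamma_i,\Gamma_i]\subseteq\ker f_i$ is nontrivial. The same holds for $H$. Applying \Cref{thm:SubdirProds2}, after possibly permuting factors there exist isomorphisms $\phi_i\colon\Gamma_i\to\Gamma_i$ such that $\Phi\coloneq\phi_1\times\cdots\times\phi_r$ restricts to $\phi$ on $G$.

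The isomorphism $\psi$ is then produced as the induced map on cokernels in the commutative diagram
\[
    \begin{tikzcd}
        1 \arrow[r] & G \arrow[r] \arrow[d, "\phi"'] & \Gamma_1\times\cdots\times\Gamma_r \arrow[r, "\sum f_i"] \arrow[d, "\Phi"] & \ZZ^n \arrow[r] \arrow[d, "\psi", dashed] & 0 \\
        1 \arrow[r] & H \arrow[r] & \Gamma_1\times\cdots\times\Gamma_r \arrow[r, "\sum g_i"'] & \ZZ^n \arrow[r] & 0
    \end{tikzcd}
\]
that is, $\psi\bigl(\sum_i f_i(x_i)\bigr)\coloneq\sum_i g_i(\phi_i(x_i))$. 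This is well-defined since $\Phi(G)=H=\ker(\sum g_i)$, and it is an isomorphism by the five lemma (or directly: surjectivity follows from surjectivity of $\sum g_i\circ\Phi$, and injectivity follows from $\Phi^{-1}(H)=G$). The compatibility $\psi\circ f_i=g_i\circ\phi_i$ is then checked by evaluating on the element $(1,\ldots,1,\gamma_i,1,\ldots,1)\in\Gamma_1\times\cdots\times\Gamma_r$: both sides equal $g_i(\phi_i(\gamma_i))$.

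The only substantive step is the application of \Cref{thm:SubdirProds2}, which hands us the factorwise structure of $\Phi$ for free; the construction of $\psi$ and the verification of the compatibility relation are then routine diagram chasing.
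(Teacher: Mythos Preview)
Your proof is correct and follows essentially the same route as the paper's: apply \Cref{thm:SubdirProds2} to extend the isomorphism $\phi$ to a factor-preserving automorphism $\Phi=\prod_i\phi_i$ of the ambient product, then read off $\psi$ as the induced map on the quotient $\ZZ^n$ and check the factorwise relation $\psi\circ f_i=g_i\circ\phi_i$. The only difference is that you re-verify full subdirectness of $G$ and $H$ from surjectivity of the $f_i,g_i$, which is harmless but redundant since it is already part of the hypothesis.
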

\begin{proof}
    We deal with the ``only if'' direction, the other one being clear. If $G$ and $H$ are isomorphic, \Cref{thm:SubdirProds2} says that (after possibly permuting factors)
    there are isomorphisms $\phi_i : \Gamma_i \to \Gamma_i$  so that the isomorphism
    \[
        \phi \coloneq\prod_{i=1}^r \phi_i: \Gamma_1\times \cdots \times \Gamma_r \longrightarrow \Gamma_1\times \cdots \times \Gamma_r
    \]
    carries the normal subgroup $G$ to the normal subgroup $H$. Therefore $\phi$ induces an isomorphism $\psi: \ZZ^n \to \ZZ^n$ on the quotients satisfying
    \[
        \psi \circ \sum_{i=1}^r f_i = \sum_{i=1}^r g_i \circ \phi.
    \]
    This implies the relation $\psi \circ f_i = g_i \circ \phi_i$ for each $i\in \{1, \ldots , r\}$.
\end{proof}

We now study homomorphisms from orbisurface groups to $\ZZ^2$. Notice that if $\Gamma$ is a closed hyperbolic orbisurface group, then there is an epimorphism $p:\Gamma \to \Sigma = \pi_1 (S)$ to the fundamental group of the (not necessarily hyperbolic) surface $S$ obtained by forgetting the orbifold structure. Moreover, any surjective morphism $f:\Gamma \to \ZZ^2$ factors through $p$, since the kernel of $p$ is normally generated by torsion. Let $\mathrm{Epi}^{d}(\Gamma)$ denote the set of epimorphisms $f:\Gamma \to \ZZ^2$ inducing a degree $d$ morphism $\Sigma \to \ZZ^2$ (i.e., so that the induced map from $S$ to a torus has degree $d$). 

\begin{prop}\label{prop:unique-degree-d-covering}
    For a closed hyperbolic orbisurface group $\Gamma$ and a degree $d \in \NN$ with $d \geq 2$, let $\Aut(\Gamma)$ act on $\mathrm{Epi}^{d}(\Gamma)$ by precomposition. Then the coset space $\mathrm{Epi}^{d}(\Gamma) / \Aut(\Gamma)$ consists of a single element.
\end{prop}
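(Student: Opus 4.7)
The plan is to reduce, via the quotient $p \colon \Gamma \twoheadrightarrow \Sigma = \pi_{1}(S)$ and abelianisation, to a classical transitivity statement for $\Sp_{2g}(\ZZ)$ on the symplectic lattice $(H_{1}(S, \ZZ), \omega)$. Every $f \in \mathrm{Epi}^{d}(\Gamma)$ factors uniquely as $f = \bar{f} \circ p$ for some epimorphism $\bar{f} \colon \Sigma \twoheadrightarrow \ZZ^{2}$, since $\ker p$ is the normal closure of the torsion of $\Gamma$ and $\ZZ^{2}$ is torsion-free. Because $\ker p$ is characteristic in $\Gamma$, every $\phi \in \Aut(\Gamma)$ descends to $\bar{\phi} \in \Aut(\Sigma)$. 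Inner automorphisms act trivially on $\Hom(\Gamma, \ZZ^{2})$ and $\bar f$ factors through the abelianisation, so it suffices to show transitivity of the induced action of $\Out(\Gamma)$ on surjective homomorphisms $F \colon H_{1}(S, \ZZ) = \ZZ^{2g} \twoheadrightarrow \ZZ^{2}$ of degree $d$.

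Second, I would identify the image of $\Out(\Gamma)$ in $\mathrm{GL}(H_{1}(S, \ZZ))$. By a Dehn--Nielsen--Baer theorem for hyperbolic $2$-orbifolds, $\Out(\Gamma)$ is isomorphic to the mapping class group of the orbifold $S^{\ast}$ and in particular contains the pure mapping class group $\mathrm{PMod}(S, P)$. Combining the classical surjection $\mathrm{Mod}(S) \twoheadrightarrow \Sp_{2g}(\ZZ)$ with the surjectivity of the forgetful map $\mathrm{PMod}(S, P) \twoheadrightarrow \mathrm{Mod}(S)$ then shows that the image of $\Out(\Gamma)$ in $\mathrm{GL}(H_{1}(S, \ZZ))$ contains all of $\Sp_{2g}(\ZZ)$.

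Third, via Poincar\'e duality $F$ corresponds to a pair $(A, B) \in H_{1}(S, \ZZ)^{2}$ with $\omega(A, B) = d$ and with $\langle A, B \rangle$ a saturated rank-two sublattice. I would then show that $\Sp_{2g}(\ZZ)$ acts transitively on such pairs. Saturation forces $A$ to be primitive, so by transitivity of $\Sp_{2g}(\ZZ)$ on primitive vectors one may assume $A = a_{1}$. The stabiliser of $a_{1}$ in $\Sp_{2g}(\ZZ)$ is an extension of $\Sp_{2g-2}(\ZZ)$, acting on $V := \langle a_{2}, b_{2}, \ldots, a_{g}, b_{g}\rangle$, by a Heisenberg-type group of symplectic transvections, which act on the class $\bar{B}$ of $B$ in $H_{1}(S, \ZZ)/\langle a_{1}\rangle$ by shifting its $V$-component by an arbitrary element of $dV$. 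A Chinese remainder type argument (using that the content of $\bar B$ is coprime to $d$ by saturation) then produces a primitive representative in the coset $\bar{B} + dV$, which $\Sp_{2g-2}(\ZZ)$ moves to $a_{2}$, bringing $B$ into the normal form $db_{1} + a_{2}$. The hypothesis $d \geq 2$ forces $g \geq 2$ (a closed surface of genus $\leq 1$ admits no degree $d \geq 2$ epimorphism onto $\ZZ^{2}$), so this normal form is always available.

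The main obstacle is this last step: while transitivity of $\Sp_{2g}(\ZZ)$ on primitive vectors is classical, refining it to ordered pairs with prescribed $\omega(A, B) = d$ and saturated $\ZZ$-span requires a careful analysis of the Heisenberg transvections in the stabiliser of $a_{1}$ combined with the number-theoretic step producing a primitive vector in $\bar{B} + dV$. A cleaner topological alternative is to invoke Edmonds' classification of degree-$d$ maps between closed oriented surfaces for the map $S \to T^{2}$ induced by $\bar{f}$, and then argue that the resulting self-homeomorphism can be chosen to preserve the orbifold structure on $S^{\ast}$.
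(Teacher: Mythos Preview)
Your proposal is correct, and in fact the paper's proof follows exactly the ``cleaner topological alternative'' you flag at the end: it invokes \cite[Cor.~8.2]{BerEdm-84} to produce a self-homeomorphism $h$ of $S$ with $h^\ast \circ f^\ast = g^\ast$ on $H^1$, then observes that $h$ may be isotoped to fix the marked points and hence lifts to $\overline{h}\in\Aut(\Gamma)$. The reduction step via the torsion-killed quotient $p\colon \Gamma\twoheadrightarrow\Sigma$ is identical in both arguments.

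Your primary route through explicit $\Sp_{2g}(\ZZ)$-transitivity on saturated pairs $(A,B)$ with $\omega(A,B)=d$ is also valid but, as you yourself anticipate, more laborious. One small wrinkle worth noting: after your Heisenberg and $\Sp(V)$ moves bring the $V$-component to $a_2$, one still has $B = db_1 + \gamma a_1 + a_2$ for some residual $\gamma$, and working modulo $\langle a_1\rangle$ does not suffice since the epimorphism $F$ genuinely depends on this coefficient. A further move (e.g., apply the Heisenberg element with $w=\gamma b_2$ and then use $\Sp(V)$ to send the resulting primitive vector $a_2 + d\gamma b_2$ back to $a_2$) kills $\gamma$; this is routine but should not be elided. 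The advantage of your symplectic route is that it is self-contained and avoids the black-box citation; the advantage of the paper's route is brevity.
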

\begin{proof}
    Let $f,g \in \mathrm{Epi}^{d}(\Gamma)$. We consider the induced injections
    \[
        f^\ast, g^\ast: \mathrm{H}^1(\ZZ^2,\ZZ) \longrightarrow \mathrm{H}^1(\Sigma,\ZZ).
    \]
    By \cite[Cor.\ 8.2]{BerEdm-84}, there is a self-homeomorphism of the surface $S$ inducing $h \in \Aut(\Sigma)$ and $h^\ast: \mathrm{H}^1(\Sigma,\ZZ) \to \mathrm{H}^1(\Sigma,\ZZ)$ so that 
    \begin{equation}\label{eq:egmiepi}
    h^\ast \circ f^\ast = g^\ast.
    \end{equation}
    This homeomorphism of $S$ can be modified by an isotopy so that it preserves the marked points of the orbifold, and hence we can lift $h$ to an automorphism $\overline{h}$ of $\Gamma$. Since $\ZZ^2$ is abelian, $f$ and $g$ are determined by $f^\ast$ and $g^\ast$, respectively, so~\eqref{eq:egmiepi} yields the equality $f \circ \overline{h} = g$. This proves that $\mathrm{Epi}^{d}(\Gamma) / \Aut(\Gamma)$ consists of a single element.
\end{proof}

The previous proposition immediately yields the following:

\begin{corollary}\label{cor.OneRamifiedEpi}
    Let $\Gamma_i$ ($1\le i \le r$) be closed hyperbolic orbisurface groups and $d_1, \ldots , d_r$ be integers greater than $1$. Fix surjections $f_i \in \mathrm{Epi}^{d_i}(\Gamma_i)$, $1\le i \le r$, and consider the group
    \[
        G \coloneq \ker(\sum_{i=1}^r f_i) < \Gamma_1\times \cdots \times \Gamma_r.
    \]
    Then the isomorphism type of $G$ depends only on the degrees $d_i$ and not on the specific choice of the $f_i$.
\end{corollary}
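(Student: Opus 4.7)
The plan is to deduce the corollary directly from \Cref{prop:unique-degree-d-covering} by assembling a product automorphism. Concretely, fix two tuples of surjections $(f_i)_{i=1}^{r}$ and $(f'_i)_{i=1}^{r}$ with $f_i, f'_i \in \mathrm{Epi}^{d_i}(\Gamma_i)$, and let
\[
    G = \ker\!\left(\sum_{i=1}^r f_i\right), \qquad G' = \ker\!\left(\sum_{i=1}^r f'_i\right)
\]
be the two resulting subgroups of $\Gamma \coloneq \Gamma_1 \times \cdots \times \Gamma_r$. The goal is to produce an automorphism of $\Gamma$ carrying $G$ to $G'$.

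Since $d_i \geq 2$, \Cref{prop:unique-degree-d-covering} applies to each $\Gamma_i$ separately and yields, for every $i$, an automorphism $\phi_i \in \Aut(\Gamma_i)$ such that $f_i \circ \phi_i = f'_i$. Setting $\phi \coloneq \phi_1 \times \cdots \times \phi_r$ defines an automorphism of $\Gamma$, and a direct calculation gives
\[
    \left(\sum_{i=1}^r f_i\right) \circ \phi \;=\; \sum_{i=1}^r (f_i \circ \phi_i) \;=\; \sum_{i=1}^r f'_i.
\]
Consequently, $\phi$ restricts to an isomorphism $G' \xrightarrow{\sim} G$, which proves that $G$ depends only on the degrees $d_1,\dots,d_r$ up to isomorphism.

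There is no real obstacle here: all the content has already been absorbed into \Cref{prop:unique-degree-d-covering}, itself a consequence of the Berstein--Edmonds theorem together with the observation that any self-homeomorphism of the underlying surface $S$ may be isotoped to preserve the orbifold marking. The only point requiring a line of verification is that the product of the chosen $\phi_i$ does intertwine the two defining homomorphisms onto $\ZZ^2$, which follows immediately from the fact that $\sum_i f_i$ is, by construction, the sum of the component maps. Note that invoking \Cref{prop:iso-of-ses} is unnecessary for this direction, since we already exhibit the product automorphism explicitly; \Cref{prop:iso-of-ses} would be the tool of choice only if one wanted to characterise \emph{when} two such kernels are isomorphic in terms of the data $(f_i, g_i)$ rather than produce the isomorphism from a given compatibility.
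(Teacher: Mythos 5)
Your proof is correct and takes exactly the route the paper intends: the paper states that \Cref{cor.OneRamifiedEpi} follows "immediately" from \Cref{prop:unique-degree-d-covering}, and you have simply spelled out the one-line verification that the product automorphism $\phi_1\times\cdots\times\phi_r$ intertwines $\sum_i f_i$ and $\sum_i f'_i$ and hence carries one kernel isomorphically to the other. Your closing remark that \Cref{prop:iso-of-ses} is the right tool for the converse characterisation but is not needed here is also accurate.
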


We are now ready to return to profinite rigidity results. We start with the following theorem, whose proof requires confines us to products surface groups, in contrast to most of our other results that apply more generally to products of orbisurface groups.

\begin{thm}\label{prop:rigidity-co-z2}
    Suppose that
    \[
    G<\Gamma \coloneq \prod_{i=1}^r\Gamma_i
    \]
    is a K\"ahler full subdirect product of hyperbolic surface groups. Suppose that there are homomorphisms $f_i: \Gamma_i\to \ZZ^2$ that are either trivial or have finite index image, $G$ is the kernel of the morphism $f = \sum_{i=1}^r f_i: \Gamma \to \ZZ^2$, and assume that $f$ is surjective. Then:
    \begin{enumerate}
        \item for each $H\in\calg_\calk(G)$ there are homomorphisms $g_i: \Gamma_i\to \ZZ^2$ that are either trivial or have finite index image with
        \[
            |\Z^2 : \im(f_i)|=|\Z^2 : \im(g_i)|
        \]
        such that $H$ is isomorphic to the kernel of the morphism
        \[
            g = \sum_{i=1}^r g_i : \Gamma \longrightarrow \Z^2;
        \]
        \item $\calg_\calk(G)$ is finite; 
        \item if all the nontrivial $f_i$ are surjective, then $\calg_\calk(G) = \{G\}$.
    \end{enumerate}
\end{thm}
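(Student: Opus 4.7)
The proof has three parts, all built on the following common setup: given $H\in\calg_\calk(G)$, \Cref{thm:SDPS-profinite-rigidity-among-KGs} (after reordering the $\Gamma_i$) embeds $H$ as a full subdirect product in $\Gamma$ and provides a factor-preserving automorphism $\Xi=(\Xi_1,\ldots,\Xi_r)$ of $\widehat{\Gamma}$ sending $\overline{G}$ to $\overline{H}$.

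For part (1), $\overline{G}=\ker(\widehat{f})$ is normal in $\widehat{\Gamma}$, hence so is $\overline{H}=\Xi(\overline{G})$; combined with $\overline{H}\cap\Gamma=H$ from separability (\Cref{cor:Bridson-Wilton}), this forces $H\lhd\Gamma$ with abelian quotient embedding densely in $\widehat{\Gamma}/\overline{H}\cong\widehat{\Z}^2$ (using $\Xi$ to transport $\widehat{\Gamma}/\overline{G}\cong\im(\widehat{f})$). A finitely generated abelian group with profinite completion $\widehat{\Z}^2$ is itself $\Z^2$, producing the surjection $g:\Gamma\twoheadrightarrow\Z^2$ with $\ker(g)=H$. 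Since $\widehat{g}$ and $\widehat{f}\circ\Xi^{-1}$ share the kernel $\overline{H}$, they differ by some $\beta\in\Aut(\widehat{\Z}^2)$, and restriction gives $\widehat{g}_i=\beta\circ\widehat{f}_i\circ\Xi_i^{-1}$. Thus $\im(\widehat{g}_i)$ and $\im(\widehat{f}_i)$ have the same index in $\widehat{\Z}^2$, and because a rank-$1$ subgroup of $\Z^2$ has infinite-index closure in $\widehat{\Z}^2$, the image of $g_i$ is trivial or of finite index matching that of $f_i$.

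The additional input for parts (2) and (3) is that the degree $d_i=\omega(f_i^*\alpha,f_i^*\beta)$ of the associated surface map $S_i\to T^2$, computed via the intersection form on $H_1(\Gamma_i)$, is constrained up to sign by the same profinite relation. Tracking the action of $\Xi_i$ on $H^2(\widehat{\Gamma}_i)\cong\widehat{\Z}$ by some $c_i\in\widehat{\Z}^\times$ and the effect of $\det(\beta)\in\widehat{\Z}^\times$ on the pairing yields $d_i'=c_i^{-1}\det(\beta)\cdot d_i$. Since both sides are integers, the ratio lies in $\Q^\times\cap\widehat{\Z}^\times=\{\pm 1\}$, so $d_i'=\pm d_i$.

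For part (2), \Cref{prop:iso-of-ses} reduces counting isomorphism classes of $H$ to counting orbits of tuples $(g_1,\ldots,g_r)$ under $\prod_i\Aut(\Gamma_i)\times\Aut(\Z^2)$ subject to the image-index and $\pm$-degree constraints above. There are finitely many sublattices $L\subseteq\Z^2$ of a given index, and for each, the surjections $\Gamma_i\twoheadrightarrow L$ of a given degree form a single $\Aut(\Gamma_i)$-orbit by \Cref{prop:unique-degree-d-covering}, giving finiteness. For part (3), all nontrivial $f_i$ are surjective and orientation-reversing self-homeomorphisms of $S_i$ (which lie in $\Aut(\Gamma_i)$) flip the degree sign, so the per-factor sign ambiguities can be absorbed into the $\phi_i$ while taking $\psi=\id$; \Cref{prop:unique-degree-d-covering} then supplies matching $\phi_i$'s and \Cref{prop:iso-of-ses} concludes $H\cong G$. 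The main obstacle is the low-degree cases $|d_i|\le 1$ outside the scope of \Cref{prop:unique-degree-d-covering}, which need a separate Berrick--Edmonds-style argument.
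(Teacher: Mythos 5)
Your part~(1) matches the paper's argument essentially step for step (normality of $\overline{H}$ transported via $\Xi$, separability giving $\overline H\cap\Gamma=H$, identifying the quotient with $\ZZ^2$ via its profinite completion, and comparing with $\widehat f\circ\Xi^{-1}$ to produce the $g_i$ and the index match). Your degree-comparison in parts~(2) and~(3) is a genuinely different and rather elegant route: the paper instead proves $|d_i|=|d_i'|$ by tracking the vanishing locus of the mod-$n$ cup-product maps $f_i^{1+1},g_i^{1+1}:H^{1+1}(\ZZ^2;\Z/n)\to H^{1+1}(\Gamma_i;\Z/n)$ through the diagram of isomorphisms coming from goodness; your $\widehat{\ZZ}^\times$-tracking of the action on $H^2$ together with $\QQ^\times\cap\widehat{\ZZ}^\times=\{\pm1\}$ accomplishes the same thing more directly.

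However, you correctly identify that your argument has a genuine gap, and it is exactly the one you flag at the end. You never establish that the nontrivial $f_i$ (and the $g_i$ produced in part~(1)) are induced by ramified covers $S_i\to T^2$ — a priori they could be maps of degree $0$ or $\pm1$ and then neither \Cref{prop:unique-degree-d-covering} nor \Cref{cor.OneRamifiedEpi} applies, and you cannot count orbits nor conclude in part~(3). The paper closes this gap by invoking \cite[Thm.~6.2]{Llo-24} (together with \Cref{thm:UnivHom}), which uses the K\"ahler hypothesis on $G$ and $H$ to show that, after reordering, there is an $s\ge 3$ such that for $i\le s$ the maps $f_i$ and $g_i$ are induced by holomorphic ramified covers $S_i\to T^2$ and are trivial for $i>s$; since $S_i$ has genus $\ge 2$, such a ramified cover automatically has degree $\ge 2$, so the counting results apply. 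Put differently, it is not a ``Berrick--Edmonds-style argument'' that is missing but the geometric input that the $f_i$ are holomorphic (hence of positive degree $\ge 2$), which is where the K\"ahler hypothesis enters in a crucial way. Without this, your proof does not go through. You should also note that the $s\ge 3$ output of the same result, with the ensuing decompositions $G=G_0\times\Gamma_{s+1}\times\cdots\times\Gamma_r$ and $H=H_0\times\Gamma_{s+1}\times\cdots\times\Gamma_r$, is what reduces the finiteness statement to the $s$ non-split factors before applying \Cref{cor:fin-many-isom-classes-of-ramified-coverings}.
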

\begin{proof}
    To prove (1), consider $H\in\calg_\calk(G)$ and fix an isomorphism $\Theta: \widehat{G} \to \widehat{H}$. By \Cref{thm:SDPS-profinite-rigidity-among-KGs}, we can find an embedding $H \leq \Gamma_1\times \cdots \times \Gamma_r$ as a full subdirect product and isomorphisms $\Xi_i: \widehat{\G}_i \to \widehat{\G}_i$ such that there is a commutative diagram:
    \[
        \begin{tikzcd}
            \widehat{\Gamma}\arrow[r, "\Xi"] & \widehat{\Gamma}\\
            \widehat{G}\arrow[r, "\Theta"] \arrow[u] & \widehat{H}\arrow[u]
        \end{tikzcd}
    \]
    Here the vertical arrows are embeddings induced by the inclusions of $G$ and $H$ in $\Gamma$, and $\Xi=\prod_{i=1}^r \Xi_i$. Right exactness of the profinite completion provides a commutative diagram
    \begin{equation}\label{eqn.hat.G.RE}
        \begin{tikzcd}
            1 \arrow[r] & \widehat{G} \arrow[r]  \arrow[d,"\Theta"]& \widehat{\G}_1 \times \cdots \times \widehat{\G}_r \arrow[r,"\widehat{f}"] \arrow[d,"\Xi"]& \widehat{\ZZ}^2 \arrow[r] \arrow[d,"{\rm id}"] & 1 \\
            1 \arrow[r] & \widehat{H} \arrow[r] & \widehat{\G}_1 \times \cdots \times \widehat{\G}_r \arrow[r,"\phi"] & \widehat{\ZZ}^2 \arrow[r] & 1
        \end{tikzcd}
    \end{equation}
    where $\phi$ is defined by $\phi=\widehat{f}\circ \Xi^{-1}$.
    
    Next, we claim that the kernel of the restriction of $\phi$ to $\Gamma$ is equal to $H$. By definition, this kernel is given by 
    \begin{equation} \label{eq:myadd}
        \widehat{H} \cap (\G_1 \times \cdots \times \G_r) = \overline{H} \cap (\G_1 \times \cdots \times \G_r).
    \end{equation}
    Since $H$ is separable in $\Gamma_1\times \cdots \times \Gamma_r$ by \Cref{cor:Bridson-Wilton}, the right-hand side of \Cref{eq:myadd} is equal to $H$, thus proving our claim about the intersection of $\ker(\phi)$ with $\Gamma$. It follows that $H$ is normal in $\G_1 \times \cdots \times \G_r$, and let $A$ denote the quotient. We thus get an exact sequence
    \begin{equation}\label{eq:myadd2}
        1 \longrightarrow H \longrightarrow \Gamma \longrightarrow A \longrightarrow 1.
    \end{equation}
    Separability of $H$ in $\Gamma$ implies that $A$ is residually finite.  Comparing~\eqref{eqn.hat.G.RE} and~\eqref{eq:myadd2}, one obtains an isomorphism $\widehat{A} \cong \widehat{\ZZ}^2$. Since $A$ is finitely generated and residually finite, it is then isomorphic to $\ZZ^2$.
    
    Now there are morphisms $g_i: \Gamma_i \to \ZZ^2$ and an automorphism $\psi$ of $\widehat{\Z}^2$ such that $H = \ker(\sum_{i=1}^r g_i)$ and $\phi = \psi \circ \widehat{g}$, where $g=\sum_{i=1}^r g_i$. Combining this with~\eqref{eqn.hat.G.RE}, we see that
    \[
        \widehat{f}=\psi \circ \widehat{g}\circ \Xi,
    \]
    or equivalently that
    \begin{equation}\label{eq:trucmucho}
        \widehat{f_i}=\psi \circ \widehat{g_i}\circ \Xi_i
    \end{equation}
    for each $i\in \{1, \ldots , r\}$. In particular, $\widehat{f_i}$ and $\widehat{g_i}$ are simultaneously trivial or simultaneously have finite index images. Since $\widehat{f_i}$ has image of a given finite index or is trivial exactly when $f_i$ has the same property, we see that each $g_i$ either is trivial or has image of finite index accordingly. We thus have
    \[
        |\Z^2 : \im(f_i)|=|\widehat{\Z}^2 : \im(\widehat{f}_i)|=|\widehat{\Z}^2 : \im(\widehat{g}_i)|=|\Z^2 : \im(g_i)|
    \]
    for those indices $i$ such that $f_i$ (or equivalently $g_i$) is nontrivial. 

    We now turn to the proof of (2). Write $\Gamma_i = \pi_{1}(S_{i})$, where $S_i$ is a closed oriented surface. Applying \Cref{thm:UnivHom} and \cite[Thm 6.2]{Llo-24} to $G$ and $H$, after potentially reordering the $\Gamma_i$, there is an $s \geq 3$ so that the maps $f_i,g_i$ are induced by ramified covers of $S_i$ over a torus for $i = 1, \dots, s$ and $f_i,g_i$ are trivial for $i = s+1, \dots, r$. This implies that $G = G_0 \times \Gamma_{s+1} \times \cdots \times \Gamma_r$ and $H = H_0 \times \Gamma_{s+1} \times \cdots \times \Gamma_r$ where $G_0 = \mathrm{ker}(\sum_{i=1}^s f_i)$ and $H_0 = \mathrm{ker}(\sum_{i=1}^s g_i)$ are full subdirect products of $\Gamma_1 \times \cdots \times \Gamma_s$. We now want to argue that the degrees of the ramified coverings corresponding to $f_i$ and  $g_i$ are the same for each $i=1, \dots, s$.

    Denote by $H^{1+1}$ the image of the cup product $H^1\times H^1\to H^2$.  By~\eqref{eqn.hat.G.RE} and naturality of the cup product (see also \cite{KrWi-16}), for every natural number $n\geq 1$ there is a commutative diagram:
    \[
        \begin{tikzcd}
            H^{1+1}(\ZZ^2;\Z/n) \arrow[r, "f_i^{1+1}"] & H^{1+1}(\Gamma_i;\Z/n) \\
            H^{1+1}(\widehat \Z^2;\Z/n) \arrow[r,"\widehat{f}_i^{1+1}"] \arrow[d] \arrow[u] & H^{1+1}(\widehat \Gamma_i;\Z/n) \arrow[d] \arrow[u] \\
            H^{1+1}(\widehat \Z^2;\Z/n) \arrow[r,"\widehat{g}_i^{1+1}"] \arrow[d] & H^{1+1}(\widehat \Gamma_i;\Z/n) \arrow[d] \\
            H^{1+1}(\ZZ^2;\Z/n) \arrow[r, "g_i^{1+1}"] & H^{1+1}(\Gamma_i;\Z/n)
        \end{tikzcd}
    \]
    Here the vertical maps are isomorphisms: the middle two maps are induced by $\psi$ and $\Xi_i$ (see~\eqref{eq:trucmucho}) and the top and bottom maps come from the goodness of the groups involved. (Goodness is recalled in \Cref{subsec:cepg}.) Now, since $H^{1+1}(\Gamma_i;\Z/n)= H^2(\Gamma_i;\Z/n)$ and $H^{1+1}(\Z^2;\Z/n)= H^2(\Z^2;\Z/n)$, the degree of the ramified cover inducing $f_i$ (resp.\ $g_i$) can be recovered from the values of $n$ for which the map $f_i^{1+1}$ (resp.\ $g_i^{1+1}$) is equal to $0$. Indeed, given a prime $n$, its highest power dividing these degrees can be recovered by considering the powers of $n$ for which $f_i^{1+1}$ (or $g_i^{1+1}$) vanishes. Since the vertical maps above are isomorphisms, the maps $f_i^{1+1}$, $\widehat f_i^{1+1}$, $\widehat g_i^{1+1}$, and $g_i^{1+1}$ vanish for the same values of $n$.  Therefore the degree of the ramified covers inducing $f_i$ and $g_i$ are equal and \cref{cor:fin-many-isom-classes-of-ramified-coverings} implies that $\calg_\calk(G)$ is finite.
    
    Finally, to establish (3) we notice that if the $f_i$ are surjective or trivial, then combining the equality of the degrees of $f_i$ and $g_i$ with \Cref{cor.OneRamifiedEpi}, implies that $H_0$ and $G_0$ are isomorphic, hence so are $H$ and $G$.
\end{proof}

\section{Once more with tori}\label{sec.rigidity.tori}

The primary goal of this section is to prove $\mathcal{K}$-profinite rigidity of direct products of a free abelian group with finitely many hyperbolic orbisurface groups (see \Cref{thm:rigid-products-tori} in \Cref{subsec:cepg}). We then combine this with the results from \Cref{sec.rigidity.subdirect} to establish \Cref{thmx:three} (\Cref{subsec:threefactors}).

\subsection{Central extensions and profinite groups}\label{subsec:cepg}

We need to understand profinite properties of central extensions of groups by $\Z^k$. Before stating the first relevant result, some notions related to \emph{goodness} are required. Let $G$ be a discrete group. As noted by Serre \cite[p.\ 15]{Serre-Gal}, for $j=0,1$ and all finite $G$-modules $M$, the restriction map $H^j(\widehat{G}, M) \to H^j(G, M)$ on group cohomology induced by $G \to \widehat{G}$ is an isomorphism. A group is then called \emph{good} if this map is an isomorphism for all $j$ and \emph{$n$-good} if the restriction map continues to be an isomorphism up to and including $j=n$. See \cite[\S 7]{Rei-15} for more on goodness.

The importance of $2$-goodness lies in the connection between $H^2(G, M)$ and central extensions of $G$ by $M$. Specifically, a group $G$ is $2$-good if and only if any group extension
\[
    1 \longrightarrow N \longrightarrow E \longrightarrow G \longrightarrow 1
\]
of $G$ by a finitely generated group $N$ induces an injection $\widehat{N} \hookrightarrow \widehat{E}$ \cite[Prop.\ 7.10]{Rei-15}. Since the profinite completion is right exact, $2$-goodness is precisely what is needed to deduce that there is always an induced exact sequence
\[
    1 \longrightarrow \widehat{N} \longrightarrow \widehat{E} \longrightarrow \widehat{G} \longrightarrow 1
\]
of profinite completions.

The following lemma is essentially due to Wilton and Zalesskii \cite[Lem.\ 8.3]{WilZal-17} in the context of orientable Seifert fibre spaces. The proof given below extracts the essence of their argument.

\begin{lemma}\label{lem_WilZal_SES}
    Let $G_0$ be a residually finite group that is $2$-good, and assume that for all nontrivial $\psi \in H^2(G_0, \Z^k)$ there exists a prime power $p^m$ such that the image of $\psi$ in $H^2(G_0, (\Z/p^m)^k)$ 
    is nontrivial. If
    \begin{equation}\label{eq_WilZal1}
        1 \longrightarrow \Z^k \longrightarrow G \longrightarrow G_0 \longrightarrow 1
    \end{equation}
    is a central extension, then \eqref{eq_WilZal1} splits if and only if the induced exact sequence
    \begin{equation}\label{eq_WilZal2}
        1 \longrightarrow \widehat{\Z}^k \longrightarrow \widehat{G} \longrightarrow \widehat{G}_0 \longrightarrow 1
    \end{equation}
    of profinite completions splits.
\end{lemma}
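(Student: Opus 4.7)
My plan is to convert both exact sequences into cohomological data and exploit the fact that the hypothesis allows the class of~\eqref{eq_WilZal1} to be detected in $H^2$ with finite coefficients, where $2$-goodness provides a bridge between $G_0$ and $\widehat{G_0}$. The forward direction is immediate: if~\eqref{eq_WilZal1} splits then $G \cong \ZZ^k \times G_0$, and applying the profinite completion functor splits~\eqref{eq_WilZal2}.

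For the converse, I would let $\psi \in H^2(G_0, \ZZ^k)$ classify~\eqref{eq_WilZal1}, so that~\eqref{eq_WilZal1} splits if and only if $\psi = 0$. Assume toward a contradiction that $\psi \neq 0$ while~\eqref{eq_WilZal2} splits, and use the hypothesis to choose a prime power $p^m$ so that the pushforward $\bar\psi \in H^2(G_0, (\ZZ/p^m)^k)$ of $\psi$ along $\ZZ^k \onto (\ZZ/p^m)^k$ is nontrivial. Pushing~\eqref{eq_WilZal2} forward along $\widehat{\ZZ}^k \onto (\ZZ/p^m)^k$ produces a central extension of $\widehat{G_0}$ by $(\ZZ/p^m)^k$, which splits because~\eqref{eq_WilZal2} does; hence its classifying class in $H^2(\widehat{G_0}, (\ZZ/p^m)^k)$ vanishes. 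By naturality of the correspondence between central extensions and $H^2$-classes, the restriction of this profinite class along $G_0 \to \widehat{G_0}$ equals $\bar\psi$. Since $(\ZZ/p^m)^k$ is finite and $G_0$ is $2$-good, restriction is an isomorphism on $H^2$ with these coefficients, forcing $\bar\psi = 0$, a contradiction.

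The main obstacle will be checking naturality carefully: one must identify the class of the pushforward of~\eqref{eq_WilZal2} along $\widehat{\ZZ}^k \onto (\ZZ/p^m)^k$ with the image of the class of~\eqref{eq_WilZal2} under the corresponding change of coefficients map on $H^2$, and then verify that restriction along $G_0 \to \widehat{G_0}$ recovers $\bar\psi$. These are formal consequences of the naturality of the extension--cocycle dictionary, once one knows that~\eqref{eq_WilZal2} is genuinely exact, which is guaranteed by the $2$-goodness of $G_0$ via \cite[Prop.\ 7.10]{Rei-15} as recalled just before the lemma.
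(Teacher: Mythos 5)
Your proof is correct and follows essentially the same approach as the paper's. The paper phrases the converse as a direct contrapositive (if \eqref{eq_WilZal1} does not split then \eqref{eq_WilZal2} cannot split) and packages the key computation into a commutative diagram relating $H^2(G_0,\Z^k)$, $H^2(G_0,(\Z/p^m)^k)$, $H^2(\widehat{G}_0,\widehat{\Z}^k)$, and $H^2(\widehat{G}_0,(\Z/p^m)^k)$, using $2$-goodness to lift the nonzero reduction $\psi_{p^m}$ to a class $\widehat{\psi}_{p^m}$ over $\widehat{G}_0$ and then identifying the associated extension with the mod-$p^m$ reduction of \eqref{eq_WilZal2}. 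You argue by contradiction and make the naturality step (pullback of pushforward equals pushforward of pullback, and the pullback of \eqref{eq_WilZal2} along $G_0\to\widehat{G}_0$ has class the image of $\psi$ under $\Z^k\hookrightarrow\widehat{\Z}^k$) more explicit, which is the precise content of the paper's diagram; these are cosmetic differences. One small remark: once you have $\mathrm{res}(c)=\bar\psi$ where $c$ is the class of the pushed-forward extension and $c=0$ because it splits, the conclusion $\bar\psi=0$ follows immediately, so the final invocation of the $2$-goodness isomorphism is not actually needed at that step (it is needed, as you note, only to guarantee exactness of \eqref{eq_WilZal2} via \cite[Prop.\ 7.10]{Rei-15}).
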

\begin{proof}
    As described before the statement of the lemma, exactness of the sequence of profinite completions follows from $2$-goodness of $G_0$. Moreover, there is a commutative diagram
    \[
        \begin{tikzcd}
            H^2(G_0, \Z^k) \arrow[r] \arrow[d] & H^2(G_0, (\Z/p^m)^k) \\
            H^2(\widehat{G}_0, \widehat{\Z}^k) \arrow[r] & H^2(\widehat{G}_0, (\Z/p^m)^k) \arrow[u, "\cong" right]
        \end{tikzcd}
    \]
    where the horizontal maps are reduction of coefficients modulo $p^m$, the left vertical map is taking profinite completions, and the right vertical map is restriction. Furthermore, it is clear that \eqref{eq_WilZal1} splitting implies \eqref{eq_WilZal2} splits. For the remainder of the proof, assume that \eqref{eq_WilZal1} does not split, and hence its characteristic class $\psi \in H^2(G_0, \Z^k)$ is nontrivial.
    
    Choose a prime power $p^m$ so that the reduction $\psi_{p^m}$ of $\psi$ modulo $p^m$ is nontrivial. Then $2$-goodness implies that there is $\widehat{\psi}_{p^m} \in H^2(\widehat{G}_0, (\Z / p^m)^k)$ whose restriction to $G_0$ is $\psi_{p^m}$. The exact sequence
    \[
        1 \longrightarrow (\Z/p^m)^k \longrightarrow \widehat{G}_p \longrightarrow \widehat{G}_0 \longrightarrow 1
    \]
    associated with $\widehat{\psi}_{p^m}$ then does not split, but it is also the reduction modulo $p^m$ of \eqref{eq_WilZal2}, and so that sequence cannot split. This proves the lemma.
\end{proof}

The content of \cite[Lem.\ 8.3]{WilZal-17} is the case where $G_0$ is a surface group and $k = 1$. The proof of the following corollary to \Cref{lem_WilZal_SES} corrects a mistake in the direction of a cohomology exact sequence in \cite{WilZal-17} and generalises their result to $k \ge 1$ and $G_0$ a product of hyperbolic orbisurface groups.

\begin{corollary}\label{cor_WilZal_SES}
    Let $k\geq 1$ be an integer and $G_0$ be a finite product of hyperbolic orbisurface groups. Then, a central extension given by a short exact sequence
    \begin{equation}\label{eq_WilZal3}
        \begin{tikzcd}
            1 \arrow[r] & \Z^k \arrow[r] & G \arrow[r] & G_0 \arrow[r] & 1
        \end{tikzcd}
    \end{equation}
    splits if and only if the induced short exact sequence
    \begin{equation}\label{eq_WilZal4}
        \begin{tikzcd}
            1 \arrow[r] & \widehat\Z^k \arrow[r] & \widehat G \arrow[r] & \widehat G_0 \arrow[r] & 1
        \end{tikzcd}
    \end{equation}
    of profinite completions splits.
\end{corollary}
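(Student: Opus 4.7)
The plan is to derive this corollary directly from \Cref{lem_WilZal_SES} applied with $N = \Z^k$. This reduces the task to verifying the two hypotheses of the lemma for $G_0 = \Gamma_1 \times \cdots \times \Gamma_r$: residual finiteness together with $2$-goodness of $G_0$, and the condition that every nonzero class in $H^2(G_0, \Z^k)$ has nonzero reduction in $H^2(G_0, (\Z/p^m)^k)$ for some prime power $p^m$.

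First I would establish $2$-goodness. Residual finiteness of $G_0$ is immediate, since each $\Gamma_i$ is a Fuchsian group. Each $\Gamma_i$ contains a closed oriented surface subgroup of finite index; surface groups are good (see \cite[\S~I.2.6]{Serre-Gal}), and goodness is inherited between a group and a finite-index subgroup (see \cite[\S~7]{Rei-15}), so each $\Gamma_i$ is good. A finite direct product of finitely generated good groups is again good: this can be obtained by comparing the Lyndon--Hochschild--Serre spectral sequence for $\Gamma_i \times \Gamma_j$ with its profinite analogue for $\widehat{\Gamma}_i \times \widehat{\Gamma}_j = \widehat{\Gamma_i \times \Gamma_j}$, using that $H^q(\Gamma_i, M)$ is finite for any finite coefficient module $M$ (a consequence of $\Gamma_i$ being virtually of type $\F$). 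Iterating across the factors shows that $G_0$ is good, in particular $2$-good.

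Next I would verify the cohomological nondegeneracy. Because each $\Gamma_i$ is virtually a closed surface group, the product $G_0$ is of type $\mathrm{VFP}$, so $A \coloneq H^2(G_0, \Z^k)$ is a finitely generated abelian group. The coefficient sequence
\[
    0 \longrightarrow \Z^k \xrightarrow{\;\cdot p^m\;} \Z^k \longrightarrow (\Z/p^m)^k \longrightarrow 0
\]
induces an exact sequence $A \xrightarrow{\,\cdot p^m\,} A \to H^2(G_0, (\Z/p^m)^k)$, so the reduction of $\psi \in A$ modulo $p^m$ vanishes precisely when $\psi \in p^m A$. For a finitely generated abelian group, $\bigcap_{m \ge 1} p^m A$ coincides with the subgroup of torsion elements of order coprime to $p$. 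Given nonzero $\psi$, I would take any prime $p$ if $\psi$ has infinite order, and any prime $p$ dividing the order of $\psi$ if $\psi$ is torsion; in either case some prime power $p^m$ detects $\psi$, as required.

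Combining these two verifications with \Cref{lem_WilZal_SES} yields the corollary. I expect the main obstacle to lie in the $2$-goodness step: while elementary in principle, it assembles three classical inputs (goodness of surface groups, its stability under finite-index extensions, and its stability under finite direct products), and the last of these requires a modest spectral-sequence comparison rather than being purely formal. The cohomological nondegeneracy condition is by contrast a clean consequence of finite generation of $H^2(G_0, \Z^k)$ and the elementary structure theory of finitely generated abelian groups.
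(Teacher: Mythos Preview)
Your proposal is correct and follows essentially the same approach as the paper: both verify the hypotheses of \Cref{lem_WilZal_SES} by establishing goodness of $G_0$ (via goodness of orbisurface groups and its stability under finite products) and then using the coefficient exact sequence to show that every nonzero class in the finitely generated group $H^2(G_0,\Z^k)$ survives reduction modulo some prime power. Your treatment of the nondegeneracy step via $\bigcap_{m} p^m A$ is a slightly slicker packaging of the paper's case split into torsion and non-torsion elements, but the content is identical.
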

\begin{proof}
    It is well-known that hyperbolic orbisurface groups are good, and the direct product of finitely many good groups is good \cite[Prop.\ 3.4]{GJZZ}, so $G_0$ is good. In particular, $G_0$ is residually finite and $2$-good. It remains to check that every nontrivial class in $H^2(G_0, \Z^k)$ has nontrivial reduction modulo some prime power $p^m$.

    For any prime power $p^m$, the exact sequence 
    \[
        \begin{tikzcd}
            1 \arrow[r] & \Z^k \arrow[r] & \Z^k \arrow[r] & (\Z/p^m)^k \arrow[r] & 1
        \end{tikzcd}
    \] 
    coming from reduction modulo $p^m$ induces a long exact sequence
    \[
        \begin{tikzcd}
            \cdots \arrow[r] & H^1(G_0, (\Z/p^m)^k) \arrow[r] & H^2(G_0, \Z^k) \arrow[r] \arrow[d, phantom, ""{coordinate, name=Z}] & H^2(G_0, \Z^k) \arrow[dll, rounded corners, to path={ -- ([xshift=2ex]\tikztostart.east) |- (Z) [near end] -| ([xshift=-2ex]\tikztotarget.west) -- (\tikztotarget)}] \\
            & H^2(G_0, (\Z/p^m)^k) \arrow[r] & H^3(G_0, \Z^k) \arrow[r] & \cdots
        \end{tikzcd}
    \]
    in cohomology. We are only interested in the segment
    \begin{equation}\label{eq:trucpleseq}
         H^2(G_0, \Z^k) \longrightarrow H^2(G_0, \Z^k) \longrightarrow H^2(G_0, (\Z/p^m)^k). 
    \end{equation}
    of this sequence. Note that the group $H^{2}(G_{0},\Z^k)$ is a finitely generated abelian group and the first arrow in~\eqref{eq:trucpleseq} is simply the multiplication by $p^m$.
    
    Choose a nonzero class $\psi\in H^{2}(G_{0},\Z^k)$. If $\psi$ is not a torsion element then it is not divisible by $p$ for sufficiently large $p$, hence it has nontrivial image in $H^2(G_0, (\Z/p)^k)$. Now suppose that $\psi$ is torsion. Identify the torsion subgroup of $H^2(G_{0},\Z^k)$ with 
    \begin{equation}\label{eq:prodgcy}
        \prod_{t=1}^{r}\Z/q_{t}^{m_{t}}\Z
    \end{equation}
    for some prime numbers $q_t$ and integers $m_t$. Since $\psi$ is nonzero, it has a nonzero component under the identification of the torsion subgroup of $H^2(G_{0},\Z^k)$ with the group in~\eqref{eq:prodgcy}. Without loss of generality, assume it has a nonzero component in the first factor. Then $\psi$ is not divisible by $q_1^{m_{1}}$, hence it has nontrivial image in $H^2(G_{0},(\Z/q_1^{m_{1}})^k)$. It follows that every nontrivial element of $H^2(G_0, \Z^k)$ has nontrivial reduction modulo some prime power, and thus \Cref{lem_WilZal_SES} applies to give the desired conclusion.
\end{proof}

\Cref{cor_WilZal_SES} is now used to prove the main result of this section.

\begin{thm}\label{thm:rigid-products-tori}
    Let $G=\Z^{k}\times \Gamma_1\times \cdots \times \Gamma_r$, where each $\Gamma_i$ is a hyperbolic orbisurface group. If $H$ is a residually finite K\"ahler group and $\widehat{\phi} : \widehat H\to\widehat G$ is an isomorphism, then $H\cong G$.
\end{thm}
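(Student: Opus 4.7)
The plan is to combine the profinite invariance of the universal homomorphism (\Cref{thm:profinite-univ-hom}) with the central extension splitting criterion of \Cref{cor_WilZal_SES}. First, I would identify the universal homomorphism of $G = \Z^k \times \prod_{i=1}^r \Gamma_i$ as the projection $\varrho : G \to \prod_{i=1}^r \Gamma_i$: each factor projection $G \to \Gamma_i$ is surjective with finitely generated kernel and so arises as a component by \Cref{thm:UnivHom}(3); conversely, any surjection from $G$ onto a hyperbolic orbisurface group $\Lambda$ must kill the central $\Z^k$, since a non-elementary Fuchsian group has trivial center, and then \Cref{lem:fsd-inclusion-is-univ-hom} applied to the quotient $\prod \Gamma_i$ forces it to agree with one of the factor projections. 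Invoking \Cref{thm:profinite-univ-hom} for $\Theta : \widehat{G} \to \widehat{H}$, the universal homomorphism $\psi : H \to \prod_{i=1}^r \Sigma_i$ of $H$ therefore has $\Sigma_i \cong \Gamma_i$ after reordering; moreover $\widehat{\psi}$ is surjective and $\Theta$ carries $\widehat{\Z}^k = \ker(\widehat{\varrho})$ onto the central subgroup $\ker(\widehat{\psi}) \subseteq \widehat{H}$, which is isomorphic to $\widehat{\Z}^k$.

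Next I would upgrade surjectivity of $\widehat{\psi}$ to surjectivity of $\psi$ on discrete groups. The image $\psi(H) \subseteq \prod \Gamma_i$ is a finitely presented full subdirect product by \Cref{thm:UnivHom}(2), and its closure in $\prod \widehat{\Gamma}_i$ coincides with $\im(\widehat{\psi}) = \prod \widehat{\Gamma}_i$. Separability of finitely presented subgroups of products of orbisurface groups (\Cref{cor:Bridson-Wilton}) then forces $\psi(H) = \prod \Gamma_i$. Let $N = \ker(\psi)$. Since $N$ is contained in $\Theta(\widehat{\Z}^k) \cap H$ and $\Theta(\widehat{\Z}^k)$ lies in the center of $\widehat{H}$, the subgroup $N$ is central in $H$. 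Applying the five-term exact sequence in integral homology to the central extension $1 \to N \to H \to \prod \Gamma_i \to 1$, together with finite presentability of $H$ and the fact that $\prod \Gamma_i$ admits a finite classifying space, gives $N$ finitely generated. Since $N$ embeds in the torsion-free group $\widehat{\Z}^k$, one has $N \cong \Z^j$ for some $j$. Because $\prod \Gamma_i$ is a product of good groups, hence itself good and in particular $2$-good, the finite generation of $N$ yields an exact sequence of profinite completions with $\widehat{N} \hookrightarrow \widehat{H}$ having image $\Theta(\widehat{\Z}^k) \cong \widehat{\Z}^k$. Thus $j = k$ and $N \cong \Z^k$.

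Finally, the resulting central extension $1 \to \Z^k \to H \to \prod \Gamma_i \to 1$ has split profinite completion, because $\widehat{H} \cong \widehat{G}$ decomposes as a direct product of its central $\widehat{\Z}^k$ and $\prod \widehat{\Gamma}_i$. \Cref{cor_WilZal_SES} therefore forces the discrete extension itself to split; a split central extension by an abelian group is a direct product, so $H \cong \Z^k \times \prod \Gamma_i = G$. The main technical hurdle along the way is the kernel analysis in the second paragraph: extracting centrality of $N$ from the profinite picture is what allows the five-term sequence to yield finite generation of $N$ and, via $2$-goodness, to pin the rank down to $k$.
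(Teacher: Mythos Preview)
Your proof is correct and follows the same overall strategy as the paper: identify the universal homomorphism via \Cref{thm:profinite-univ-hom}, upgrade $\widehat{\psi}$ to a surjection $\psi$ using separability (\Cref{cor:Bridson-Wilton}), show $\ker(\psi)\cong\Z^k$, and conclude by \Cref{cor_WilZal_SES}. The only notable difference is in the analysis of $N=\ker(\psi)$: the paper constructs an explicit embedding $H\hookrightarrow \Z^m\times\prod_i\Gamma_i$ (via the projection of $\iota_H(H)$ to $\widehat{\Z}^k$) so that $N$ is visibly a subgroup of $\Z^m$, whereas you deduce centrality of $N$ from the inclusion $N\subseteq\Theta(\widehat{\Z}^k)\subseteq Z(\widehat{H})$ and then use the five-term exact sequence together with finite generation of $H_2(\prod_i\Gamma_i)$ and $H_1(H)$ to obtain finite generation of $N$. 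Both arguments are valid and lead to the same conclusion.
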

\begin{proof}
    Since $H$ is residually finite, the isomorphism $\widehat{H}\cong \widehat{G}$ induces an embedding $\iota_H: H\hookrightarrow \widehat{\Z}^k\times \widehat{\Gamma}_1\times \cdots \times \widehat{\Gamma}_r$. \Cref{thm:profinite-univ-hom} then implies that the composition of $\iota_H$ with projection away from $\widehat{\Z}^k$ coincides with the universal homomorphism
    \[
        \psi: H \longrightarrow \prod_{i=1}^{r} \Gamma_i
    \]
    for $H$, up to post-composition by automorphisms of the $\widehat{\Gamma}_i$. In particular $\widehat{\psi}$ is onto. Note that $\psi(H)$ is a finitely presented subgroup of $\prod_{i=1}^r \Gamma_i$ by \Cref{thm:UnivHom}, hence it is closed in the profinite topology on $\prod_{i=1}^r \widehat{\Gamma}_i$ by \Cref{cor:Bridson-Wilton}. If $\psi(H)$ were a proper subgroup of $\prod_{i=1}^r \Gamma_i$, then the image of $\widehat{\psi}$ would be a proper subgroup of $\prod_{i=1}^{r}\widehat{\Gamma_i}$, which is a contradiction. It follows that $\psi$ is surjective. We now consider the composition of $\iota_H$ with the projection
    \[
        p_1: \widehat{\Z}^k\times \widehat{\Gamma}_1\times \cdots \times \widehat{\Gamma}_r\longrightarrow \widehat{\Z}^{k},
    \]
    which has dense image. Since $\widehat{\Z}^k$ is torsion free, $\im(p_1\circ \iota_H)$ is isomorphic to $\Z^m$ for some $m\geq k$. In particular, we obtain an embedding 
    \begin{equation}\label{eq:eohyt}
    (p_{1}\circ \iota_H,\psi) : H\hookrightarrow \Z^m\times \Gamma_1\times \cdots \times \Gamma_r
    \end{equation}
    of $H$ as a subdirect product.
    
    The proof is now completed using a simplified version of the argument for \cite[Thm.\ 5.14]{FruMor-22}. Since the $\Gamma_i$ have trivial centre, the centre of $H$ is given by $Z(H)=\ker(\psi)=H\cap \Z^m$ where $H$ is identified with its image under the embedding~\eqref{eq:eohyt}. We examine the short exact sequence
    \begin{equation}\label{eq:againsestyu}
        1\longrightarrow Z(H) \longrightarrow H \longrightarrow \Gamma_1 \times \cdots \times \Gamma_r \longrightarrow 1, 
    \end{equation}
    where the surjection from $H$ to $\Gamma_1\times \cdots \times \Gamma_r$ is given by $\psi$. By right-exactness of taking profinite completions and $2$-goodness of the group $\Gamma_1\times \cdots \times \Gamma_r$, this yields the short exact sequence
    \begin{equation}\label{eq:againsestyuu}
        1\longrightarrow\widehat{Z(H)} \longrightarrow \widehat{H} \longrightarrow \widehat{\Gamma}_1\times \cdots \times \widehat{\Gamma}_r\longrightarrow 1.    
    \end{equation}
    Thus the image of $\widehat{Z(H)}$ in $\widehat{H}$ coincides with $\ker (\widehat{\psi})\cong \widehat{\Z}^{k}$. It follows that $Z(H)$ is isomorphic to $\Z^k$. 

    Since $\widehat{H}$ is isomorphic to $\widehat{\Z}^k\times \widehat{\Gamma}_1\times \cdots \times \widehat{\Gamma}_r$ in such a way that the projection $\widehat{H}\to \widehat{\Gamma}_1\times \cdots \times \widehat{\Gamma}_r$ coincides with $\widehat{\psi}$ up to an automorphism of the target, the sequence~\eqref{eq:againsestyuu} splits. \Cref{cor_WilZal_SES} then implies that~\eqref{eq:againsestyu} splits. In other words, $H$ is isomorphic to $\Z^k \times  \Gamma_1 \times \cdots \times \Gamma_r$.
\end{proof}

Finally, observe that \Cref{lem:Grothendieck-rigidity} and finite presentability of K\"ahler groups imply the following which, using the vocabulary from \Cref{def:grorig}, says that $G$ is Grothendieck rigid amongst residually finite K\"ahler groups.

\begin{corollary}\label{cor:grotrigi}
    Suppose that $G$ is a K\"ahler subgroup contained in the product of finitely many hyperbolic orbisurface groups with possibly a free abelian group of finite rank. If $H\in \mathcal{G}_{\mathcal{K}}(G)$ and if there is a morphism $\phi: H \to G$ that induces an isomorphism on profinite completions, then $\phi$ is an isomorphism.
\end{corollary}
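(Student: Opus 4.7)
The plan is simply to verify the hypotheses of \Cref{lem:Grothendieck-rigidity}. Since $H \in \calg_\calk(G)$, the group $H$ is finitely presented (because K\"ahler) and residually finite, so the only missing ingredient is that every finitely presented subgroup of $G$ is separable in $G$.

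First I would establish this separability property for the ambient group $A \coloneq \Z^k \times \Gamma_1 \times \cdots \times \Gamma_r$. By Selberg's lemma, each hyperbolic orbisurface group $\Gamma_i$ contains a finite index torsion-free subgroup $S_i$, which is a closed hyperbolic surface group and therefore residually free; together with the fact that $\Z^k$ is residually free, this produces a finite index subgroup $A_0 \coloneq \Z^k \times S_1 \times \cdots \times S_r$ of $A$ that is a finite direct product of residually free groups. Since any such product embeds into a direct product of free groups, $A_0$ is itself residually free, and hence \Cref{thm:Bridson-Wilton} implies that every finitely presented subgroup of $A_0$ is separable. \Cref{lem:virtual-separability} then propagates this property to all of $A$.

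To pass from $A$ to $G$, I would observe that any finitely presented subgroup $L \le G$ is also a finitely presented subgroup of $A$, hence separable in $A$, so it can be written as $L = \bigcap_i F_i$ with each $F_i$ of finite index in $A$. Intersecting with $G$ gives $L = \bigcap_i (F_i \cap G)$ with $F_i \cap G$ of finite index in $G$, so $L$ is separable in $G$. With all hypotheses in place, \Cref{lem:Grothendieck-rigidity} applies directly to $\phi$ to conclude that $\phi$ is an isomorphism.

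There is no genuine obstacle here, as the argument is essentially a concatenation of earlier results; the only step that requires a moment's thought is the residual freeness of the finite index subgroup of the ambient direct product, which follows from a mild application of Selberg's lemma combined with the standard fact that finite direct products of residually free groups remain residually free.
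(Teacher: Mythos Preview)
Your proof is correct and follows the same approach as the paper: both arguments verify the hypotheses of \Cref{lem:Grothendieck-rigidity} by showing that the ambient product $P=\Z^k\times\Gamma_1\times\cdots\times\Gamma_r$ is virtually residually free, then invoking \Cref{thm:Bridson-Wilton} and \Cref{lem:virtual-separability} to obtain separability of finitely presented subgroups in $P$, and finally passing to $G$. The paper's proof is simply more terse, leaving the ``virtually residually free'' step implicit, whereas you spell it out via Selberg's lemma and the closure of residual freeness under finite direct products.
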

\begin{proof}
    Consider the inclusion
    \[
        G< P\coloneq \Z^k \times \Gamma_1 \times \cdots \times \Gamma_r,
    \]
    where the $\Gamma_i$ are hyperbolic orbisurface groups. According to \Cref{thm:Bridson-Wilton} and \Cref{lem:virtual-separability}, every finitely presented subgroup of $P$ is separable in $P$. In particular every finitely presented subgroup of $G$ is separable in $P$, hence separable in $G$. \Cref{lem:Grothendieck-rigidity} now yields the desired conclusion. 
\end{proof}

At this point we have proved \Cref{thmx.intro.rigidGroups}. We now summarize where the various statements from that theorem are proved.

\begin{proof}[Proof of \Cref{thmx.intro.rigidGroups}]
    The first two points are consequences of \Cref{thm:rigid-products-tori} (or of \Cref{cor:rigidity-of-direct-products} for the first one), while the third follows from \Cref{prop:rigidity-co-z2}. The last claim in \Cref{thmx.intro.rigidGroups} follows from \Cref{cor:grotrigi}.
\end{proof}

\subsection{The three factors case}\label{subsec:threefactors}

Here we use the results from the previous sections to prove the following theorem, already stated in the introduction.

\setcounter{thmx}{2}
\begin{thmx}\label{thmx:three}
    Let $\Gamma_{1}$, $\Gamma_{2}$, $\Gamma_{3}$ be three hyperbolic orbisurface groups and
    \[
        G\leq \Gamma_{1}\times \Gamma_{2}\times \Gamma_{3}
    \]
    be a K\"ahler group. Then $G$ contains a finite index subgroup $H$ that is profinitely rigid amongst residually finite K\"ahler groups. 
\end{thmx}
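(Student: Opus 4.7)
The plan is to combine Llosa Isenrich's classification of K\"ahler subgroups of products of three orbisurface groups~\cite{Llo-24} with the rigidity results already proved in this paper, reducing in each case to one of the three bullets of~\Cref{thmx.intro.rigidGroups}.

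First I would pass to a full subdirect model of $G$. Projecting away from any factor $\G_i$ with $G\cap \G_i=\{1\}$ and replacing each remaining $\G_i$ by $p_i(G)$, one may assume $G$ embeds subdirectly in a product of at most three hyperbolic orbisurface groups. Intersecting $G$ with the direct factors $\G_i\cap G$ and possibly shrinking each $\G_i$ to a finite-index subgroup, one obtains a finite-index subgroup $G_0\le G$ that is a full subdirect K\"ahler subproduct of $\G_1'\times\cdots\times\G_t'$ for some $t\le 3$. Since $G$ is finitely presented, so is $G_0$, and \Cref{thm:SubdirProds,thm:SubdirProds2} apply in what follows.

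If $t\le 2$ then $G_0$ is either a hyperbolic orbisurface group (handled by the first bullet of \Cref{thmx.intro.rigidGroups}) or a finitely presented full subdirect K\"ahler subgroup of a product of two orbisurface groups. In the latter situation the argument of \cite[Thm.\ 6.4]{Llo-20} together with the VSP property forces $G_0$ to be virtually the direct product $\G_1'\times\G_2'$, so after passing to a further finite-index subgroup I can apply \Cref{cor:rigidity-of-direct-products}.

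The main case is $t=3$. Here I would invoke \cite[Thm.\ 6.2]{Llo-24}, which classifies the image of the universal homomorphism of a K\"ahler full subdirect product of three hyperbolic orbisurface groups: after passing to a finite-index subgroup, either the inclusion $G_0\hookrightarrow \G_1'\times\G_2'\times\G_3'$ has finite index (handled by \Cref{cor:rigidity-of-direct-products}), or $G_0$ is the kernel of a homomorphism $f=f_1+f_2+f_3:\G_1'\times\G_2'\times\G_3'\to \Z^n$ with each $f_i$ induced by a ramified covering of the underlying surface of $\G_i'$ over a torus; the K\"ahler condition for exactly three factors further forces $n\le 2$. After replacing each $\G_i'$ by a suitable finite-index subgroup $\G_i''$ on which the restriction of $f_i$ is either trivial or surjective onto a direct summand, and adjusting so that the total map $\G_1''\times\G_2''\times\G_3''\to \Z^2$ is factorwise surjective, one obtains a finite-index subgroup $H\le G_0\le G$ falling under the third bullet of \Cref{thmx.intro.rigidGroups}, which then provides $\calk$-profinite rigidity of $H$.

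The main obstacle is the bookkeeping in the last step: extracting from \cite[Thm.\ 6.2]{Llo-24} the precise statement that, up to finite index, the only non-product case is a coabelian kernel in $\Z^2$, and arranging the factorwise surjectivity hypothesis of \Cref{thmx.intro.rigidGroups} by passing to finite-index subgroups of the $\G_i'$ without destroying the K\"ahler property or the full subdirect structure of the associated kernel.
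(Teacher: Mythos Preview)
Your overall strategy is exactly the one in the paper: reduce via Llosa Isenrich's classification to either a direct product (handled by \Cref{thm:rigid-products-tori}/\Cref{cor:rigidity-of-direct-products}) or a co-$\Z^2$ kernel (handled by \Cref{prop:rigidity-co-z2}), and in the latter case pass to finite-index subgroups of the factors to make the maps $f_i$ factorwise surjective. That last step is done in the paper precisely as you describe: set $A=\bigcap_i \im(f_i)$, replace each $\Sigma_{g_i}$ by $f_i^{-1}(A)\cap\Sigma_{g_i}$, and apply the third bullet of \Cref{thmx.intro.rigidGroups}.

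The gap is in your initial reduction. When you ``replace each remaining $\G_i$ by $p_i(G)$'' to obtain a full subdirect embedding, there is no reason for $p_i(G)$ to be a hyperbolic orbisurface group: it is merely a subgroup of one, hence possibly cyclic or virtually free of infinite index. For a concrete instance, take $G=\Z^2\times\pi_1(S_g)$ embedded in $\G_1\times\G_2\times\G_3$ via two infinite cyclic subgroups of $\G_1,\G_2$ and an isomorphism onto $\G_3$; then $p_1(G)\cong p_2(G)\cong\Z$. Your case split on $t\le 2$ versus $t=3$, as well as your invocation of \cite[Thm.\ 6.2]{Llo-24}, presupposes that all surviving factors are hyperbolic orbisurface groups, so this case (and more generally the appearance of an abelian factor) falls through the cracks. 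The paper sidesteps the issue entirely by citing \cite[Cor.\ 1.3]{Llo-24}, which directly gives the dichotomy for an arbitrary K\"ahler subgroup of a product of three orbisurface groups: a finite-index subgroup is either $\Z^{2k}\times\Sigma_{g_1}\times\cdots\times\Sigma_{g_s}$ with $2k+s\le3$, or a co-$\Z^2$ normal full subdirect product of three surface groups with each $f_i$ nontrivial. Replacing your first two paragraphs by this single citation both closes the gap and removes the bookkeeping you flag as the main obstacle.
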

\begin{proof}
    Let $G$ be as in the statement of the theorem. According to \cite[Cor.\ 1.3]{Llo-24}, we have the following dichotomy:
    \begin{enumerate}
        \item either $G$ has a finite index subgroup $G_{0}$ that is isomorphic to a direct product 
        \[
            \mathbb{Z}^{2k}\times \Sigma_{g_{1}}\times \cdots \times \Sigma_{g_{s}}
        \]
        where $2k+s\le 3$ and $\Sigma_{g_{i}}$ is a surface group of genus $g_i\ge 2$;

        \item $G$ has a finite index subgroup $G_0$ that embeds as a normal full subdirect product of three hyperbolic surface groups $\Sigma_{g_{1}}\times \Sigma_{g_{2}}\times \Sigma_{g_{3}}$ with quotient isomorphic to $\Z^2$ and such that the quotient morphism $\phi :  \Sigma_{g_{1}}\times \Sigma_{g_{2}}\times \Sigma_{g_{3}} \to \mathbb{Z}^2$ is nontrivial on factors.
    \end{enumerate}
    In Case (1) the assertion is an immediate consequence of \Cref{thm:rigid-products-tori}. In Case (2), the restriction of $\phi$ to factors is nontrivial and thus has finite index image $A_i\coloneq \phi(\Sigma_{g_i})\leq \mathbb{Z}^2$. Let
    \[
        A\coloneq \bigcap_{i=1}^3 A_i \cong \mathbb{Z}^2.
    \]
    Then the subgroups $\Sigma_i^\prime\coloneq  \phi^{-1}(A)\cap \Sigma_i\leq \Sigma_i$ have finite index, the restriction
    \[
        \phi^\prime : \Sigma^\prime_1\times \Sigma^\prime_2 \times \Sigma^\prime_3 \longrightarrow A\cong\mathbb{Z}^2
    \]
    of $\phi$ to $\Sigma^\prime_1\times \Sigma^\prime_2\times \Sigma^\prime_3$ is surjective on factors and $G_0^\prime\coloneq \ker(\phi^\prime)\leq G_0$ has finite index. The assertion then follows from \Cref{prop:rigidity-co-z2}.  
\end{proof}

\section{Strong rigidity for direct products of Riemann surfaces}\label{sec.homeo}

\Cref{thm:rigid-products-tori} says in particular that the profinite completion completely characterises direct products of hyperbolic surface groups (possibly with a free Abelian factor) among all residually finite K\"ahler groups. We prove here that for closed aspherical K\"ahler manifolds, more can be said. Namely, we establish the following theorem, which is more general than~\Cref{cor:strong-rigidity-of-direct-product}.  

\begin{thm}\label{thm:aspherical-case}
    Let $X$ be a closed aspherical K\"ahler manifold with residually finite fundamental group. Let $S_{1}, \ldots, S_{r}$ be closed oriented surfaces of genus greater than $1$ and let $k$ be a natural number. 
    \begin{enumerate}
        \item If the profinite completion of $\pi_{1}(X)$ is isomorphic to that of 
            \[
                \pi_{1}(S_{1})\times \cdots \times \pi_{1}(S_{r})\times \mathbb{Z}^{2k},
            \]
        then for each $j\in {1, \ldots , r}$, there exists a closed Riemann surface $F_j$ homeomorphic to $S_{j}$ such that $X$ is biholomorphic to a principal torus bundle over $F_{1}\times \cdots \times F_{r}$ with fiber of complex dimension $k$.
        \item If moreover $X$ is projective, it has a finite covering space $X_{0}$ biholomorphic to a direct product of Riemann surfaces with an Abelian variety. 
    \end{enumerate}
\end{thm}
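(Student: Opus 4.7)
I would begin by invoking \Cref{thm:rigid-products-tori} to obtain an abstract isomorphism $\pi_{1}(X) \cong \ZZ^{2k}\times \prod_{i=1}^{r}\pi_{1}(S_i)$. \Cref{thm:UnivHom} then identifies the universal homomorphism of $\pi_{1}(X)$ with the projection $\varrho\colon \pi_{1}(X)\to \prod_{i=1}^{r}\pi_{1}(S_i)$; each factor is induced by a holomorphic fibration $f_i\colon X\to F_i$ onto a hyperbolic Riemann surface $F_i$. Since $\pi_{1}(S_i)$ is torsion free, the induced orbifold structure on $F_i$ is trivial, and $\pi_{1}(F_i)\cong \pi_{1}(S_i)$ forces $F_i$ to be homeomorphic to $S_i$.

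Next, I would combine the $f_i$ into a holomorphic map $f = (f_1,\ldots,f_r)\colon X\to F := F_1\times \cdots \times F_r$ whose induced map on $\pi_{1}$ is $\varrho$. Surjectivity of $f$ is immediate, while connectedness of its fibers follows from the Stein factorisation $f = X\to Y\to F$: the composition on $\pi_{1}$ is the surjection $\varrho$, so $\pi_{1}(Y)\to \pi_{1}(F)$ is onto, and finiteness of $Y\to F$ forces $Y = F$. Asphericity of $X$ yields $\dim_{\CC}X = \cd(\pi_{1}(X))/2 = k+r$, so the generic smooth fiber $T$ has complex dimension $k$. The long exact homotopy sequence together with asphericity shows that $\pi_{1}(T)$ injects into $\ker(\varrho) \cong \ZZ^{2k}$, while nonvanishing of $H^{2k}(T,\QQ)$ forces $\pi_{1}(T)\cong \ZZ^{2k}$; a standard characterisation of complex tori as aspherical compact K\"ahler manifolds with maximal-rank free abelian $\pi_{1}$ then identifies $T$ biholomorphically with a complex torus of dimension $k$.

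The main difficulty is upgrading $f$ to a principal torus bundle, which I would address via the Albanese morphism $\alpha\colon X\to \mathrm{Alb}(X)$. The universal property of $\mathrm{Alb}$ produces a surjection $\mathrm{Alb}(X)\to \mathrm{Alb}(F) = \prod_{i}J(F_i)$ whose kernel is a complex subtorus $T_{0}\subset \mathrm{Alb}(X)$ of dimension $k$. The direct-product splitting of $\pi_{1}(X)$ yields a section (possibly only after a finite \'etale cover) of this exact sequence of complex tori, realising the Albanese as $T_{0}\times \mathrm{Alb}(F)$. The translation action of $T_{0}$ on $\mathrm{Alb}(X)$ then lifts through $\alpha$ to a free, proper, holomorphic $T_{0}$-action on $X$ whose orbits coincide with the fibers of $f$, so that $f\colon X\to F$ is a principal $T_{0}$-bundle. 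This establishes Part~(1).

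For Part~(2), suppose $X$ is projective. Then $T_{0}$ inherits a polarisation from $X$ and is an abelian variety, and the bundle class of $X\to F$ lies in $H^{1}(F,T_{0})$. Applying Poincar\'e complete reducibility to $\mathrm{Alb}(X)$ kills this class after pullback along a finite \'etale cover $X_{0}\to X$: the projection $\mathrm{Alb}(X_{0})\to T_{0}$ furnished by the genuine splitting on $X_{0}$ provides a holomorphic retraction trivialising the pulled-back bundle. Thus $X_{0}\cong F_{0}\times T_{0}$ for some finite \'etale cover $F_{0}\to F$; finally, any finite cover of $F = F_1\times \cdots\times F_r$ is dominated by one of the form $F'_1\times \cdots\times F'_r$ (since finite-index subgroups of a product of surface groups contain subgroups of product form), yielding the claimed product decomposition of Riemann surfaces with an Abelian variety after passage to a further finite cover.
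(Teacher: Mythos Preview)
Your argument has a genuine gap at the step upgrading $f$ to a principal torus bundle. First, the direct-product splitting of $\pi_{1}(X)$ does not yield a holomorphic splitting of $0\to T_{0}\to{\rm Alb}(X)\to{\rm Alb}(F)\to 0$: a splitting of the underlying lattices need not respect the complex structure, so the extension of complex tori can remain nontrivial, and Poincar\'e reducibility is unavailable since Part~(1) does not assume projectivity. Passing to a finite \'etale cover would in any case not suffice, as Part~(1) concerns $X$ itself. Second, and more seriously, even granting such a splitting you assert without justification that the $T_{0}$-translation on ${\rm Alb}(X)$ lifts through $\alpha$ to a free holomorphic $T_{0}$-action on $X$. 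The Albanese map is neither a covering nor a priori an embedding, so there is no general mechanism for such a lift; producing one is essentially equivalent to the conclusion you want. A related issue appears earlier: invoking the long exact homotopy sequence for $f$ presupposes that $f$ is a fibre bundle, which is part of what must be shown.

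The paper avoids both problems by constructing an explicit model inside the Albanese. With $A={\rm Alb}(X)$, $A_i={\rm Alb}(F_i)$, $\varphi\colon A\to\prod_i A_i$ the map induced by the fibrations $p_i$, and $\alpha\colon\prod_i F_i\hookrightarrow\prod_i A_i$ the product of Albanese embeddings, it sets $Y=\varphi^{-1}({\rm Im}\,\alpha)\subset A$. Then $Y\to{\rm Im}(\alpha)\cong\prod_i F_i$ is visibly a principal torus bundle with fibre $\ker\varphi$, and no splitting of $A$ is required. The Albanese map factors as $X\overset{\Phi}{\to}Y\hookrightarrow A$; one checks (quoting \cite{DelPy-19}) that $\Phi$ is a homotopy equivalence, and Siu's lemma (\Cref{lemma:siu}) on degree-one holomorphic maps then forces $\Phi$ to be a biholomorphism. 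The torus action on $X$ is obtained by transport of structure from $Y$, bypassing the lifting problem entirely. Part~(2) is then handled, as you suggest, via Poincar\'e reducibility, but applied directly to the inclusion $Y\subset A$ rather than through a bundle-class argument.
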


As we shall see, the proof of this result relies on a classical argument going back to Siu's work~\cite{Siu-80}, as well as on some arguments taken from the work of Delzant and Py~\cite{DelPy-19}. Note that when $k=0$, the first item above says that $X$ is biholomorphic to the direct product of the $F_{j}$, and we recover the statement of \Cref{cor:strong-rigidity-of-direct-product}.

\begin{proof}
    Let $X$ be as in the statement of \Cref{thm:aspherical-case}. According to \Cref{thm:rigid-products-tori}, there is an isomorphism
    \[
        \psi : \pi_{1}(X) \longrightarrow \mathbb{Z}^{2k} \times \pi_{1}(S_{1})\times \cdots \times \pi_{1}(S_{r})
    \]
    and by a result by Catanese \cite[Thm.\ 5.14]{Cat-08}, there exist closed Riemann surfaces $F_{1}, \ldots , F_{r}$ respectively homeomorphic to $S_{1}, \ldots , S_{r}$ such that the surjections $\pi_{1}(X)\to \pi_{1}(S_{i})$ defined by $\psi$ are induced by some holomorphic fibrations $p_{i} : X \to F_{i}$ ($1\le i \le r$).
    
    We now argue as in~\cite[\S 4.2]{DelPy-19}. Let $a : X \to A$ be the Albanese map of $X$ and $\alpha_{i} : F_{i}\to A_{i}$ be the Albanese map of $F_{i}$. Note that $\alpha_{i}$ is an embedding. By the universal property of $\alpha$, there exists a holomorphic map $\varphi_{i} : A \to A_{i}$ making the diagram
    \[
        \begin{tikzcd}
            X \arrow[r, "a"] \arrow[d, "p_{i}"] & A \arrow[d, "\varphi_{i}"] \\
            F_{i} \arrow[r, "\alpha_{i}"] & A_{i} \\
        \end{tikzcd}
    \]
    commute for each $i$. Denote by $\varphi : A \to A_{1}\times \cdots \times A_{r}$ the product of the maps $\varphi_{i}$ and by $\alpha : F_{1}\times \cdots \times F_{r} \to A_{1}\times \cdots \times A_{r}$ the product of the maps $\alpha_{i}$. We can and do assume that $\varphi$ is a homomorphism.
    
    Consider the submanifold
    \[
        Y\coloneq\{y\in A \mid \varphi (y)\in {\rm Im}(\alpha)\}
    \]
    of $A$. The map $a$ can be written as $a=i\circ \Phi$ where $\Phi : X \to Y$ is holomorphic and $i : Y \to A$ is the inclusion. One proves as in \cite[Lem.\ 35]{DelPy-19} that $\Phi$ is a homotopy equivalence. Finally, exactly as in~\cite{DelPy-19}, we invoke \Cref{lemma:siu}, stated below, to conclude that $\Phi$ is a biholomorphism. This proves the first item of the theorem, noting that the projection $Y\to {\rm Im} (\alpha)$ is a principal torus bundle.
    
    To prove the second item, we invoke (again as in~\cite{DelPy-19}) Poincar\'e's reducibility theorem~\cite[VI.8]{debarre}. If $X$ is projective, then so is $A$. Poincar\'e's theorem then implies that there is a subtorus $B\subset A$ intersecting $\ker \varphi$ transversely. Therefore the natural product map $\ker \varphi \times B \to A$ is a finite covering space and hence $B$ is isogenous to the product of the $A_{i}$. Thus after replacing $B$ by a finite covering space we can assume that
    \[
        B=B_{1}\times \cdots \times B_{r},
    \]
    where $B_{i}$ is a finite covering space of $A_{i}$. We thus have a natural covering map
    \[
        \pi : \ker \varphi \times B_1 \times \cdots \times B_r \longrightarrow A.
    \]
    The preimage of $Y$ under $\pi$ is a finite covering space of $Y$ (hence of $X$) having the desired product structure. 
\end{proof}

A proof of the following classical lemma can be found in~\cite{Siu-80} (see the proof of Theorem 8 there).

\begin{lemma}\label{lemma:siu}
    Let $f : X_{1}\to X_{2}$ be a holomorphic map between two closed K\"ahler manifolds of the same dimension $n$. Assume that $f$ has degree $1$ and that the induced map $f_{\ast} : H_{2n-2}(X_{1},\mathbb{R})\to H_{2n-2}(X_{2},\mathbb{R})$ is injective. Then $f$ is a holomorphic diffeomorphism. This holds in particular if $f$ is a homotopy equivalence.   
\end{lemma}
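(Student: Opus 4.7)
My proof plan follows the classical strategy used by Siu in~\cite{Siu-80}. The idea is to argue by contradiction: assuming $f$ is not a biholomorphism, I would exhibit a nonzero class in $H_{2n-2}(X_1, \mathbb{R})$ whose image under $f_*$ vanishes, contradicting injectivity. The K\"ahler hypothesis is essential, as it will be used to certify that fundamental classes of complex hypersurfaces in $X_1$ are nonzero in real homology.

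Since $f$ has degree one between compact connected complex manifolds of the same dimension, it is surjective and bimeromorphic. Let $E \subseteq X_1$ be the exceptional locus, namely the zero set of the Jacobian determinant, or equivalently the set of points at which $f$ fails to be a local biholomorphism. If $E$ is empty, then $f$ is a finite unramified cover of degree one, hence a biholomorphism, and we are done. So assume $E \ne \emptyset$; as the zero locus of a holomorphic section of a line bundle, $E$ is a non-empty analytic hypersurface in $X_1$, with irreducible components $E_j$. Since $f$ is bimeromorphic of degree one, a generic fiber of $f$ is a single point; a generic point of $E_j$, however, has positive-dimensional fiber, and a standard dimension count then yields $\dim_{\mathbb{C}} f(E_j) \leq n-2$.

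Now I would fix a K\"ahler form $\omega_1$ on $X_1$. The smooth part of $E_j$ is a K\"ahler submanifold of complex dimension $n-1$, hence $\int_{E_j} \omega_1^{n-1} > 0$, showing that the fundamental class $[E_j] \in H_{2n-2}(X_1, \mathbb{R})$ is nonzero. On the other hand, the projection formula gives, for any smooth closed $(2n-2)$-form $\beta$ on $X_2$,
\[
    \langle f_*[E_j], \beta \rangle = \int_{E_j} f^* \beta.
\]
Since $f|_{E_j}$ factors through $f(E_j)$, which has complex dimension at most $n-2$, the real rank of $df|_{E_j}$ is at most $2n-4$ at each smooth point of $E_j$. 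Thus $f^*\beta|_{E_j}$ vanishes as a $(2n-2)$-form on the smooth locus of $E_j$, giving $\langle f_*[E_j], \beta \rangle = 0$ for every $\beta$, hence $f_*[E_j] = 0$ in $H_{2n-2}(X_2, \mathbb{R})$. This contradicts injectivity of $f_*$, so $E$ must be empty and $f$ is a biholomorphism. For the final assertion, a homotopy equivalence between closed oriented manifolds has degree $\pm 1$ and induces isomorphisms on all real homology groups, so both hypotheses hold automatically.

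The main obstacle I anticipate is the dimension bound $\dim_{\mathbb{C}} f(E_j) \leq n-2$ in the second paragraph. This is standard for proper bimeromorphic holomorphic maps between smooth complex manifolds of the same dimension, but one needs to invoke the correct local structure; concretely, one uses Remmert's proper mapping theorem to see that $f(E_j)$ is analytic, then compares the dimension of a generic fiber of $f$ (a point) with the positive-dimensional fiber above a generic point of $f(E_j)$ to deduce the codimension drop. Once this is in hand, the rest of the argument is essentially linear algebra, the projection formula, and the positivity property of K\"ahler classes on complex subvarieties.
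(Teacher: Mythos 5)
Your proof correctly reconstructs Siu's argument, and the paper itself gives no independent proof but defers to the proof of Theorem 8 in~\cite{Siu-80}, which is exactly the argument you outline. The one spot that needs tightening is the phrase ``a generic point of $E_j$ has positive-dimensional fiber'': this is not an immediate consequence of $f$ being bimeromorphic of degree one but is the analytic form of Zariski's main theorem (a finite bimeromorphic germ between smooth germs is a biholomorphism, so no point of the Jacobian divisor can have a zero-dimensional fiber), and only after that does the dimension count give $\dim_{\CC} f(E_j)\le n-2$; you flag this yourself in the final paragraph, so the gap is one of exposition rather than substance.
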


\begin{remark}
    In (real) dimension greater or equal to $6$, one could appeal to the results of Farrell--Jones~\cite{FarJon-93} to prove that a manifold $X$ as in \Cref{thm:aspherical-case} must be homeomorphic to the direct product of the $S_{i}$'s with a torus. However, our proof based on K\"ahler geometry yields a stronger result.    
\end{remark}
    
\section{Profinite invariance of the BNS invariant}\label{sec.bns}

The goal of this section is to prove that the BNS invariant $\Sigma^1(G)$ of a K\"ahler group $G$ is a profinite invariant in a suitable sense, thereby proving \Cref{thmx.BNS.profinite}.  

We first recall some notions related to Alexander polynomials and present the $\TAP_1(R)$ property for a ring $R$ (\Cref{sec:alpolandtap}), which was introduced by Hughes and Kielak~\cite{HughesKielak2022}. The definition of $\TAP_1(R)$ will be recalled in \Cref{TAP.def}, but a more detailed introduction can be found in~\cite{HughesKielak2022}. We then establish in \Cref{sec:tapkahler} that K\"ahler groups have the $\TAP_1(R)$ property for every Noetherian unique factorization domain (UFD) $R$. Note that the corresponding result for $3$-manifold groups was proven in a series of papers by Friedl and Vidussi \cite{FriedlVidussi2008,FriedlVidussi2011annals,FriedlVidussi2013}. Finally, \Cref{thmx.BNS.profinite} is established in \Cref{sec:proofprofiniterigiditybns}.

\subsection{Preliminaries and the TAP property}\label{sec:alpolandtap}

The following definitions are taken from Friedl and Vidussi's survey \cite{FriedlVidussi2011survey} but restated in the language of group homology. Throughout \Cref{sec.bns}, $R$ is a unique factorisation domain (UFD), which is moreover assumed to be Noetherian. For a finitely generated $R$-module $M$, the fact that $R$ is Noetherian implies that we can find a presentation
\[
    \begin{tikzcd} 
        R^r\arrow[r,"P"] & R^s \arrow[r] & M \arrow[r] & 0
    \end{tikzcd}
\]
where $P$ is an $r\times s$ matrix with coefficients in $R$.

\begin{defn}
    The \emph{first elementary ideal} $E(M)$ of $M$ is the ideal of $R$ generated by all $(s-1)\times (s-1)$ minors of the matrix $P$ if $0<s-1\leq r$, is defined to be $\{0\}$ if $s-1>r$, and is $R$ if $s-1\le 0$.  
\end{defn}

Note that $E(M)$ is independent of the presentation. See~\cite{CrowellFox1963} or \cite[Lem.\ 4.4]{Turaev2001} for a proof of this fact. Since $R$ is a UFD, there exists a unique smallest principal ideal of $R$ that contains $E(M)$. A generator of this principal ideal is defined to be an \emph{order} of $M$, and an order is well defined up to muliplication by a unit in $R$~\cite[p.\ 18]{Turaev2001}. In what follows, if $A$ is an arbitrary ring and we denote by $A[t^{\pm 1}]$ the ring of Laurent polynomials in the indeterminate $t$ with coefficients in $A$. If $A$ is a Noetherian UFD, then so is $A[t^{\pm 1}]$.

Let $G$ be a finitely generated group and $\alpha : G\twoheadrightarrow Q$ be an epimorphism onto a finite group. Consider the ring of Laurent polynomials $RQ[t^{\pm 1}]$ with coefficients in the group ring $RQ$. For each element $\varphi\in H^1(G;\Z)$, we can define a $G$-action on $RQ[t^{\pm 1}]$ by the formula
\[
    g\cdot x = t^{\varphi(g)}\alpha(g)x
\]
for $g \in G, x \in  RQ[t^{\pm 1}]$. The first twisted (homological) Alexander module of $\varphi$ and $\alpha$ is defined to be $H_1(G;RQ[t^{\pm 1}])$. Observe that $H_1(G;RQ[t^{\pm 1}])$ has the structure of a left $R[t^{\pm 1}]$-module, and it is a finitely generated $R[t^{\pm 1}]$-module since $G$ is finitely generated.

\begin{defn}
    The first twisted Alexander polynomial $\Delta_{1,R}^{\varphi,\alpha}(t)$ over $R$ with respect to $\varphi$ and $\alpha$ is defined to be the order of the twisted (homological) Alexander module of $\varphi$ and $\alpha$, treated as a left $R[t^{\pm 1}]$-module. 
\end{defn}

\begin{remark}
    The twisted Alexander polynomial, or rather merely its vanishing or nonvanishing, can be defined for more general rings; see~\cite[\S2.B]{HughesKielak2022}. However, we content ourselves here with the case where $R$ is a Noetherian UFD. 
\end{remark}

In the following, we shall say that a nonzero character $\varphi : G\to \Z$ is fibred if its kernel is finitely generated. Although the terminology algebraically fibred is more commonly used in the literature, we use the term fibred here since there is no risk of confusion with other notions. Being fibred is equivalent to the fact that $\varphi$ is contained in the intersection
\[
    \Sigma^1 (G)\cap - \Sigma^1(G).
\]
See~\cite{bieneustr} or~\cite[Ch.\ 11]{py-book}. Here, and as in the introduction, the BNS invariant $\Sigma^{1}(G)$ is viewed as a subset of $H^{1}(G,\R)-\{0\}$. Following~\cite{HughesKielak2022}, we say that $\varphi$ is \emph{semi-fibred} if it lies in $\Sigma^1(G)\cup -\Sigma^1(G)$. When $\Sigma^1(G)=-\Sigma^1(G)$, the two notions are equivalent. When $\varphi$ is semi-fibred, the Alexander polynomial $\Delta_{1,R}^{\phi,\alpha} (t)$ is nonzero for any surjective morphism to a finite group $\alpha : G \to Q$; see~\cite[Lem.\ 4.1]{FriedlVidussi2016}, or~\cite[Prop.\ 2.13]{HughesKielak2022}. Property $\TAP_1(R)$ deals with the converse of this implication.

\begin{defn}\label{TAP.def}
	  A finitely generated group $G$ is in the class $\TAP_1(R)$ if, for every nontrivial character $\varphi\in H^1(G;\Z)$, $\varphi$ is semi-fibred if and only if for every surjective homomorphism $\alpha: G\to Q$ to a finite group $Q$ its twisted Alexander polynomial $\Delta_{1,R}^{\varphi,\alpha}(t)$ is nonvanishing.
\end{defn}

Applications of this notion to some profinite rigidity problems are given in~\cite{HughesKielak2022}. Here we use it as a key step to establish \Cref{thmx.BNS.profinite}.  

\subsection{Large characters and the TAP property}\label{sec:tapkahler}

We continue to consider a finitely generated group $G$. We shall say that a character $\varphi : G\onto \Z$ is \emph{large} if there exists a finite-index subgroup $H$ of $G$, and a free noncyclic quotient $\psi : H\onto F$, such that $\varphi|_{H}$ factors through $\psi$. Note that if $\varphi$ is large, the group $H$ can be taken to be characteristic.

As in \Cref{sec:alpolandtap}, in what follows $R$ is a Noetherian UFD.

\begin{prop}\label{prop.largecharTAP}
    Let $G$ be a finitely generated group and let $\varphi\in H^1(G;\Z)$ be nontrivial and primitive. If $\varphi$ is large, then there exists a finite quotient $\alpha : G\onto Q$ such that $\Delta_{1,R}^{\varphi,\alpha}(t)=0$.
\end{prop}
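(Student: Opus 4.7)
The plan is to take $\alpha$ to be the natural quotient $G \twoheadrightarrow Q := G/H$, where $H$ is the characteristic finite-index subgroup supplied by the largeness of $\varphi$, and to show that this choice already forces $\Delta^{\varphi,\alpha}_{1,R}(t) = 0$. Fix also the free noncyclic quotient $\psi: H \twoheadrightarrow F$ and the nontrivial character $\bar\varphi: F \to \ZZ$ with $\varphi|_H = \bar\varphi \circ \psi$. Since $H$ is normal of finite index in $G$, $\alpha$ is a genuine finite quotient of $G$.

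The first step is to apply Shapiro's lemma to identify
\[
    H_1(G; RQ[t^{\pm 1}]) \cong H_1(H; R[t^{\pm 1}])
\]
as $R[t^{\pm 1}]$-modules, where the right-hand side is the untwisted Alexander module of $H$ associated to the character $\varphi|_H$. The second step exploits the factorisation $\varphi|_H = \bar\varphi \circ \psi$: because $K := \ker\psi$ acts trivially on $R[t^{\pm 1}]$, the Lyndon-Hochschild-Serre five-term exact sequence for $1 \to K \to H \to F \to 1$ yields a surjection
\[
    H_1(H; R[t^{\pm 1}]) \twoheadrightarrow H_1(F; R[t^{\pm 1}]).
\]

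The third step is a direct computation on the free group. Writing $F = \langle x_1, \ldots, x_n \rangle$ with $n \geq 2$ and $a_i := \bar\varphi(x_i)$, the standard free resolution of the trivial $\ZZ[F]$-module (arising from the wedge of $n$ circles) identifies $H_1(F; R[t^{\pm 1}])$ with the kernel of the map $R[t^{\pm 1}]^n \to R[t^{\pm 1}]$ sending $e_i$ to $t^{a_i} - 1$. At least one $a_i$ is nonzero since $\bar\varphi$ is nontrivial, so the image is a nonzero submodule of the integral domain $R[t^{\pm 1}]$; it therefore has $R[t^{\pm 1}]$-rank one, and the kernel has rank $n-1 \geq 1$. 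Propagating backward through the surjection and Shapiro's isomorphism, $H_1(G; RQ[t^{\pm 1}])$ is not a torsion $R[t^{\pm 1}]$-module. Over the Noetherian UFD $R[t^{\pm 1}]$ this forces its order to vanish, i.e., $\Delta^{\varphi,\alpha}_{1,R}(t) = 0$.

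The most delicate point will be to verify that Shapiro's lemma produces an $R[t^{\pm 1}]$-linear isomorphism, not merely an isomorphism of abelian groups, since the $G$-action on $RQ[t^{\pm 1}]$ used in the paper ($g \cdot x = t^{\varphi(g)} \alpha(g) x$) differs from the naive induced-module structure $\mathrm{Ind}_H^G R[t^{\pm 1}]$ by a $1$-cocycle depending on a choice of set-theoretic section of $\alpha$. Checking that this cocycle only multiplies $R[t^{\pm 1}]$-coefficients by units of the form $t^{\ast}$, which commute with multiplication by $t$ on coefficients, gives the required $R[t^{\pm 1}]$-linearity and hence the equality of orders needed to conclude.
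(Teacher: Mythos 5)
Your proof is correct and follows essentially the same strategy as the paper's: take $\alpha$ to be the quotient by the finite-index characteristic subgroup $H$, reduce via Shapiro to an Alexander module of $H$ over $R[t^{\pm 1}]$, and use the free noncyclic quotient to exhibit an $R[t^{\pm 1}]$-torsion-free piece. The differences are organizational. Where the paper cites \cite[Lem.\ 2.11]{HughesKielak2022} (or \cite[Lem.\ 3.3]{FriedlVidussi2008AJM}) for the identity $\Delta^{\varphi,\alpha}_{1,R} = \Delta^{\varphi|_H,\mathrm{id}}_{1,R}$, you re-derive it from Shapiro's lemma, and your closing paragraph correctly locates and resolves the one genuine subtlety there: the $G$-module $RQ[t^{\pm 1}]$ agrees with $\mathrm{Ind}_H^G R[t^{\pm 1}]$ only up to the twist $g_i \otimes p \mapsto \alpha(g_i)\,t^{\varphi(g_i)}\,p$, and the scalars $t^{\varphi(g_i)}$ are units in $R[t^{\pm 1}]$ that commute with the coefficient action, so the identification is $R[t^{\pm 1}]$-linear. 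After that, the paper reduces to $k=2$, applies Shapiro a second time to pass to $H_1(\ker\varphi|_H; R)$, and surjects onto $H_1(F_\infty;R)\cong R[t^{\pm 1}]$; you instead keep $H_1(H;R[t^{\pm 1}])$, use the five-term sequence for $1\to\ker\psi\to H\to F\to 1$ (valid because $\ker\psi$ acts trivially on $R[t^{\pm 1}]$), and compute $H_1(F;R[t^{\pm 1}])$ directly as the rank-$(n-1)$ kernel of $(e_i)\mapsto(t^{a_i}-1)$. These are the same computation in different packaging; yours avoids the infinite-rank $F_\infty$ but requires the explicit free resolution of $F$. One small improvement: the final step, that a non-torsion finitely generated $R[t^{\pm 1}]$-module has zero order, should be cited (the paper invokes \cite[Lem.\ 2.9]{HughesKielak2022}) rather than asserted.
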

\begin{proof}
    Since $\varphi$ is large there exists a finite index characteristic subgroup $H$ of $G$ such that $\varphi|_H$ factors through a surjection $H\onto F_k$. Let $n$ be the divisibility of the character $\varphi|_H$. Applying a suitable automorphism of $F_k$ and then projecting onto a smaller free quotient, we can assume that $k=2$ and that $\varphi|_H$ equals $n$ times the composition of the surjection $H\onto F_2=F(a,b)$ with the morphism $F(a,b)\to \Z$ sending $a$ to $1$ and $b$ to $0$. Let $Q=G/H$ and let $\alpha$ denote the quotient map $G\to Q$.  By \cite[Lem.\ 2.11]{HughesKielak2022} or \cite[Lem.\ 3.3]{FriedlVidussi2008AJM} we have 
    \begin{equation}\label{eq:apareequal}
        \Delta^{\varphi|_H,\mathrm{id}}_{1,R}(t)=\Delta^{\varphi,\alpha}_{1,R}(t).
    \end{equation}
    Thus, it suffices to prove that $H_1(H;R[t^{\pm 1}])$ is not a torsion $R[t^{\pm 1}]$-module. By~\cite[Lem.\ 2.9]{HughesKielak2022}, this implies that the element in~\eqref{eq:apareequal} is zero and concludes the proof of the proposition.

    By Shapiro's Lemma there is an isomorphism
    \[
        H_1(H;R[t^{\pm1}])\cong H_1(\ker\varphi|_H;R)
    \]
    of $R[t^{\pm 1}]$-modules. Moreover, there is a commutative diagram
    \[
        \begin{tikzcd}
            H \arrow[r, two heads]  & F_2  \\
            \ker\varphi|_H \arrow[u] \arrow[r, two heads] & F_\infty \arrow[u]
        \end{tikzcd}
    \]
    and $H_1(\ker\varphi|_H;R)$ maps onto the module $H_{1}(F_\infty,R)$. Note that the conjugates
    \[
        a^{b^{j}}=b^{j}ab^{-j}
    \]
    for $j \in \ZZ$ form a basis of $F_{\infty}$. This yields an isomorphism
    \[
        H_1(F_{\infty},R)=\bigoplus_{\Z}R
    \]
    where the $R[t^{\pm 1}]$-module structure is obtained by defining $t$ to act by permutation of the factors. Thus $H_{1}(F_{\infty},R)$ is not a torsion module and thus $H_1(\ker\varphi|_H;R)$ also is not a torsion module. This completes the proof.
\end{proof}

\begin{corollary}\label{cor:largecharTAP}
    Let $G$ be a finitely generated group. If every nontrivial primitive character of $G$ that is not semifibred is large, then $G$ is in $\TAP_1(R)$.
\end{corollary}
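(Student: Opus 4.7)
The plan is to verify both implications in the definition of $\TAP_1(R)$. The forward direction is immediate: if $\varphi$ is semi-fibred, then $\Delta_{1,R}^{\varphi,\alpha}(t) \neq 0$ for every finite quotient $\alpha \colon G \twoheadrightarrow Q$, as recorded in the paragraph preceding \Cref{TAP.def}. The real content lies in the contrapositive of the other direction: starting from a nontrivial character $\varphi$ that is \emph{not} semi-fibred, I must exhibit a finite quotient $\alpha \colon G \twoheadrightarrow Q$ for which $\Delta_{1,R}^{\varphi,\alpha}(t) = 0$.

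Given such a $\varphi$, the first step is to pass to its primitive representative. Write $\varphi = n \varphi'$ with $\varphi'$ primitive and $n \geq 1$. The invariance of $\Sigma^1(G)$ under multiplication by positive scalars guarantees that $\varphi'$ is also not semi-fibred, so the hypothesis of the corollary applies and yields that $\varphi'$ is large. \Cref{prop.largecharTAP} applied to $\varphi'$ then produces a finite quotient $\alpha \colon G \twoheadrightarrow Q$ such that $\Delta_{1,R}^{\varphi',\alpha}(s) = 0$.

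The remaining task is to transfer the vanishing from $\varphi'$ to $\varphi = n\varphi'$. Unwinding the definitions, the coefficient $G$-module $RQ[t^{\pm 1}]$ twisted by $n\varphi'$ can be written as the extension of scalars $RQ[s^{\pm 1}] \otimes_{R[s^{\pm 1}]} R[t^{\pm 1}]$ along the ring map $R[s^{\pm 1}] \hookrightarrow R[t^{\pm 1}]$ sending $s \mapsto t^n$, and this inclusion makes $R[t^{\pm 1}]$ a free $R[s^{\pm 1}]$-module of rank $n$. Flatness yields a natural identification of the Alexander modules $M_{\varphi,\alpha} \cong M_{\varphi',\alpha} \otimes_{R[s^{\pm 1}]} R[t^{\pm 1}]$, and a direct rank computation using the fraction fields $R(s) = R(t^n) \subset R(t)$ shows that the $R[t^{\pm 1}]$-rank of the extended module equals the $R[s^{\pm 1}]$-rank of the original. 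Positivity of the generic rank is therefore preserved, giving $\Delta_{1,R}^{\varphi,\alpha}(t) = 0$. The main point demanding care is this base-change identification of the two Alexander modules together with the compatibility of the substitution $s \mapsto t^n$; all other steps are essentially formal given \Cref{prop.largecharTAP} and the conical invariance of $\Sigma^1(G)$.
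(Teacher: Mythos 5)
Your argument is correct and follows the same overall strategy as the paper, which derives the corollary from \Cref{prop.largecharTAP} together with the Hughes--Kielak fact that twisted Alexander polynomials of semi-fibred characters do not vanish. The paper treats the deduction as immediate, thereby glossing over the reduction from arbitrary nontrivial characters (as \Cref{TAP.def} demands) to primitive ones (as the hypothesis of the corollary and the statement of \Cref{prop.largecharTAP} demand); you make this reduction explicit. Your key observations are correct: semi-fibredness is invariant under positive scaling, and the twisted Alexander module for $\varphi = n\varphi'$ is indeed the flat base change of that for $\varphi'$ along $R[s^{\pm 1}]\hookrightarrow R[t^{\pm 1}]$, $s\mapsto t^n$, which preserves the generic rank and hence the vanishing or nonvanishing of the twisted Alexander polynomial. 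A slightly shorter route to the same end is available: largeness itself is preserved under scaling by positive integers (if $\varphi'|_H$ factors through a noncyclic free quotient of a finite-index $H$, so does $n\varphi'|_H$), so $\varphi$ is already large, and the proof of \Cref{prop.largecharTAP} never actually invokes the primitivity hypothesis on $\varphi$ (only the divisibility of $\varphi|_H$ enters), so it applies directly to $\varphi$ and bypasses the base-change step entirely.
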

\begin{proof}
    This is an immediate consequence of \Cref{prop.largecharTAP} and \cite[Prop.\ 2.13]{HughesKielak2022}, which says that the first twisted Alexander polynomials of semi-fibred characters do not vanish.
\end{proof}

To prove that K\"ahler groups are $\TAP_1(R)$, we will use a characterisation of large characters for K\"ahler groups in terms of factorisations through hyperbolic orbisurface groups. The following result, which is well-known, follows from the work of Napier--Ramachandran~\cite{NapRam-01} or from Delzant's description of the BNS-invariant for K\"ahler groups~\cite{Del-BNS}, that was recalled in the introduction. 

\begin{thm}\label{thm:Delzant}
    Let $G$ be a K\"ahler group and let $\chi\in H^1(G;\Z)$ be an integral character. The following are equivalent:
    \begin{enumerate}
        \item $\chi$ is large;
        \item $\chi$ is not fibred;
        \item $\chi$ is not semi-fibred;
        \item $\chi$ factors through one of the components of the universal homomorphism of $G$.
    \end{enumerate}
\end{thm}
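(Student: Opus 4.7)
The plan is to exploit Delzant's description of $\Sigma^{1}(G)$ for K\"ahler groups (recalled in the introduction) to obtain $(2)\Leftrightarrow(3)\Leftrightarrow(4)$ directly, and then to close the cycle with $(4)\Rightarrow(1)\Rightarrow(2)$.

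Delzant's theorem states that $\Sigma^{1}(G)$ is the complement in $H^{1}(G,\mathbb{R})-\{0\}$ of the union of the linear subspaces $V_{i}=\im(H^{1}(\Gamma_{i},\mathbb{R})\hookrightarrow H^{1}(G,\mathbb{R}))$. Each $V_{i}$ is stable under $\chi\mapsto-\chi$, so $\Sigma^{1}(G)=-\Sigma^{1}(G)$; hence ``fibred'' coincides with ``semi-fibred'', giving $(2)\Leftrightarrow(3)$. An integral $\chi$ lies in $V_{i}$ if and only if it factors through $\varrho_{i}$ (using surjectivity of $\varrho_{i}$), which yields $(3)\Leftrightarrow(4)$.

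The main content lies in $(4)\Rightarrow(1)$. Write $\chi=\tau\circ\varrho_{i}$ for a nontrivial $\tau\colon\Gamma_{i}\to\mathbb{Z}$. By Selberg's lemma and hyperbolicity, $\Gamma_{i}$ contains a characteristic finite-index subgroup $\Sigma$ that is the fundamental group of a closed orientable surface of genus $g\geq 2$, and $\tau|_{\Sigma}$ is nontrivial (else $\tau$ would factor through the finite group $\Gamma_{i}/\Sigma$). The crucial input is that the mapping class group acts on $H^{1}(\Sigma,\mathbb{Z})$ through $\mathrm{Sp}(2g,\mathbb{Z})$ transitively on pairs consisting of a nonzero cohomology class and a Lagrangian annihilating it; for $g\geq 2$, every nonzero class annihilates some Lagrangian, since the kernel of $\tau|_{\Sigma}$ on $H_{1}(\Sigma,\mathbb{Z})$ has codimension one and contains its symplectic annihilator, with quotient a symplectic space of dimension $2g-2\geq 2$ admitting Lagrangians. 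Pulling back a suitable mapping-class-group element, one may assume $\tau|_{\Sigma}$ vanishes on a standard Lagrangian spanned by generators $a_{1},\dots,a_{g}$, so that it factors through the natural surjection $\Sigma\onto F_{g}$ killing these generators. Setting $H:=\varrho_{i}^{-1}(\Sigma)\leq G$, which has finite index, yields a surjection $H\onto F_{g}$ through which $\chi|_{H}$ factors.

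For the remaining implication $(1)\Rightarrow(2)$, if $\chi|_{H}=\beta\circ\psi$ with $\psi\colon H\onto F$ onto a non-cyclic free group and $\beta$ nontrivial, then $\ker(\beta)\lhd F$ is a nontrivial normal subgroup of infinite index, hence not finitely generated by the classical theorem of Karrass--Solitar; since $\psi$ maps $\ker(\chi|_{H})$ onto $\ker(\beta)$, the former is not finitely generated, and neither is $\ker(\chi)$, of which it is a finite-index subgroup. The principal obstacle is the symplectic argument in $(4)\Rightarrow(1)$; an alternative route is to invoke the Napier--Ramachandran theorem \cite{NapRam-01} directly to obtain the implication $(2)\Rightarrow(4)$, thereby bypassing the explicit construction of a free quotient.
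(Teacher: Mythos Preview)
Your proof is correct and follows essentially the same cycle of implications as the paper's proof. The only substantive difference is that the paper dismisses $(4)\Rightarrow(1)$ as ``a classical fact that any homomorphism from a hyperbolic surface group to $\mathbb{Z}$ factors through a surjection onto a nonabelian free group'', whereas you supply an explicit argument via the symplectic action. One small imprecision: your stated transitivity on \emph{pairs} $(\tau,L)$ is stronger than needed and not literally true (divisibility of $\tau$ is an invariant), but the argument only uses that every nonzero class is annihilated by some Lagrangian direct summand together with transitivity of $\mathrm{Sp}(2g,\mathbb{Z})$ on such summands, both of which hold; alternatively, transitivity on primitive cohomology classes already reduces to the case $\tau|_\Sigma \in \mathbb{Z}\cdot b_1^*$, where the standard Lagrangian visibly works.
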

\begin{proof} We have seen in the introduction that $\Sigma^1 (G)=-\Sigma^1 (G)$. This implies that the second and third points are equivalent. The first point implies the second since infinite index normal subgroups of nonabelian finitely generated free groups are not finitely generated. The third point implies the fourth by~\cite{NapRam-01} or~\cite[Thm.\ 1.1]{Del-BNS}. Finally, it is a classical fact that any homomorphism from a hyperbolic surface group to $\Z$ factors through a surjection onto a nonabelian free group. Since hyperbolic orbisurface groups are virtually hyperbolic surface groups, this proves that the fourth point implies the first. 
\end{proof}

Combining~\Cref{cor:largecharTAP} and~\Cref{thm:Delzant}, we obtain:

\begin{prop}\label{prop.Kahler.TAP}
    If $G$ is a K\"ahler group, then $G$ is in $\TAP_1(R)$.
\end{prop}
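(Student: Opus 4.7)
The plan is a one-line combination of the two results immediately preceding the statement. By \Cref{cor:largecharTAP}, it suffices to check that every nontrivial primitive character $\chi \in H^1(G;\Z)$ that is not semi-fibred is large. But this is precisely the equivalence of items (1) and (3) in \Cref{thm:Delzant}, so the hypothesis of \Cref{cor:largecharTAP} holds automatically for any Kähler group $G$. We conclude that $G \in \TAP_1(R)$.

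Concretely, I would simply write: fix a Noetherian UFD $R$ and a nontrivial primitive character $\varphi \in H^1(G;\Z)$. If $\varphi$ is semi-fibred, then $\Delta_{1,R}^{\varphi,\alpha}(t) \neq 0$ for every finite quotient $\alpha : G \to Q$ by \cite[Prop.\ 2.13]{HughesKielak2022}. If $\varphi$ is not semi-fibred, then \Cref{thm:Delzant} says $\varphi$ is large, and \Cref{prop.largecharTAP} then produces a finite quotient $\alpha : G \to Q$ with $\Delta_{1,R}^{\varphi,\alpha}(t) = 0$. Together these two implications are exactly the defining condition of $\TAP_1(R)$ from \Cref{TAP.def}.

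Since both \Cref{cor:largecharTAP} and \Cref{thm:Delzant} have already been established, there is no real obstacle. The two substantive inputs are already in place: the Napier--Ramachandran/Delzant characterisation of non-semi-fibred integral characters of Kähler groups as those that factor through a hyperbolic orbisurface quotient (and hence virtually through a non-abelian free quotient), and the Shapiro-lemma computation in \Cref{prop.largecharTAP} showing that large characters force the first twisted Alexander polynomial to vanish for a suitable finite quotient. The only care needed is to ensure primitivity is preserved when restricting $\varphi$ to a finite-index characteristic subgroup in order to apply \Cref{prop.largecharTAP}, but this is handled by the divisibility adjustment already built into the proof of that proposition.
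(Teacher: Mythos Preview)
Your proposal is correct and is exactly the paper's approach: the paper proves this proposition by the single sentence ``Combining \Cref{cor:largecharTAP} and \Cref{thm:Delzant}, we obtain,'' and your argument merely unpacks this in slightly more detail.
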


\subsection{Proving~\Cref{thmx.BNS.profinite}}\label{sec:proofprofiniterigiditybns}

We start by recalling the statement we want to prove:

\setcounter{thmx}{4}
\begin{thmx}\label{thmx.BNS.profinite}
   Let $G$ and $H$ be K\"ahler groups such that $\widehat{G}\cong\widehat{H}$.  Then there is a linear isomorphism $L : H^{1}(G,\QQ) \to H^{1}(H,\QQ)$ such that
    \[
        L_{\RR}(\Sigma^1(G))=\Sigma^1(H).
    \]
\end{thmx}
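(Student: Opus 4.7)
The plan is to combine the profinite invariance of the universal homomorphism (\Cref{thm:profinite-univ-hom}) with the fact that K\"ahler groups lie in $\TAP_1(R)$ for every Noetherian UFD $R$ (\Cref{prop.Kahler.TAP}). First I would use \Cref{thm:profinite-univ-hom} to identify, after reordering and suitable isomorphisms of profinite orbisurface groups, the universal homomorphisms $\phi=(\phi_1,\dots,\phi_r)\colon G\to\Gamma_1\times\cdots\times\Gamma_r$ and $\psi=(\psi_1,\dots,\psi_r)\colon H\to\Gamma_1\times\cdots\times\Gamma_r$ as becoming compatible via $\Theta$. From this, together with the $\widehat{\ZZ}$-linear isomorphism between the profinite completions of the free parts of $G^{ab}$ and $H^{ab}$ induced by $\Theta$, one constructs a rational linear isomorphism $L\colon H^1(G,\QQ)\to H^1(H,\QQ)$ that carries the rational subspace $V_i^G\coloneq\phi_i^*(H^1(\Gamma_i,\QQ))$ to $V_i^H\coloneq\psi_i^*(H^1(\Gamma_i,\QQ))$ for each $i\in\{1,\dots,r\}$.

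By Delzant's description recalled in the introduction, $H^1(G,\RR)-\Sigma^1(G)$ equals $\{0\}\cup\bigcup_i V_i^G\otimes_\QQ\RR$, and similarly for $H$. So, at the level of unions of subspaces, $L_\RR$ already matches the BNS complements. As emphasised before the statement of \Cref{thmx.BNS.profinite}, the subtle point is that the $V_i^G$ may have nontrivial pairwise overlaps (as in the Dimca--Papadima--Suciu examples), so one must verify that $L$ respects the individual subspaces and not merely their union. To handle this I would invoke $\TAP_1(R)$: since $\Sigma^1(G)=-\Sigma^1(G)$ for K\"ahler groups, a primitive integral character $\varphi\in H^1(G,\ZZ)$ lies in $\Sigma^1(G)$ if and only if it is semi-fibred, if and only if the twisted Alexander polynomial $\Delta_{1,R}^{\varphi,\alpha}(t)$ is nonzero for every epimorphism $\alpha\colon G\twoheadrightarrow Q$ onto a finite group $Q$. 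Since $\Theta$ bijects the finite quotients of $G$ with those of $H$, and invoking the Hughes--Kielak framework of \cite{HughesKielak2022}, the vanishing of $\Delta_{1,R}^{\varphi,\alpha}(t)$ is preserved under this bijection together with $L$. Consequently $L$ sends primitive integral characters in $\Sigma^1(G)$ to those of $\Sigma^1(H)$, and the equality $L_\RR(\Sigma^1(G))=\Sigma^1(H)$ then follows from the openness of $\Sigma^1$ and density of rational primitive characters in open subsets of $H^1(G,\RR)$.

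The main obstacle I anticipate is the claim that the vanishing of the twisted Alexander polynomial over a finite quotient is a profinite invariant. The issue is that the module $H_1(G,RQ[t^{\pm 1}])$ depends on the continuous character $\varphi$, and this $\varphi$ is not visible from any single finite quotient of $G$. The Hughes--Kielak arguments, mirroring the $3$-manifold arguments of Liu and Jaikin-Zapirain, are designed precisely to cope with this mixing of continuous and finite data; the technical heart of the proof is the careful invocation of these arguments in the K\"ahler setting, where \Cref{prop.Kahler.TAP} supplies the needed $\TAP_1(R)$ input and \Cref{thm:profinite-univ-hom} supplies the rigid combinatorial skeleton.
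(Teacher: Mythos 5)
The overall architecture you sketch matches the paper's at the level of ingredients (\Cref{thm:profinite-univ-hom}, Delzant's description of $\Sigma^1$, and the TAP-theoretic machinery from \cite{HughesKielak2022}), but there is a genuine gap at the heart of the argument: the construction of $L$ is asserted rather than carried out, and it cannot be carried out the way you describe.

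The central difficulty is that the isomorphism $\Theta$ induces a $\widehat{\ZZ}$-linear isomorphism $\Phi\colon\widehat{A}\to\widehat{B}$ between the profinite completions of the free parts $A$ and $B$ of the abelianizations, and a $\widehat{\ZZ}$-linear map does \emph{not} descend or specialize canonically to a $\QQ$-linear map $A\otimes\QQ\to B\otimes\QQ$: the matrix entries of $\Phi$ live in $\widehat{\ZZ}$, not in $\ZZ$ or $\QQ$. This is exactly the problem Liu's machinery solves, and the paper imports it verbatim: one forms the finitely generated $\ZZ$-submodule $\mathrm{MC}(\Phi)\subset\widehat{\ZZ}$ of matrix coefficients (\Cref{def:mc-phi}), picks a lift $\varepsilon\colon\mathrm{MC}(\Phi)\to\ZZ$ of the mod-$2$ reduction, and forms the $\varepsilon$-specialization $\Phi_\varepsilon\colon A\to B$, which has finite cokernel. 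The map $L$ in the theorem is (up to direction and $\QQ$-extension) the dual $\varepsilon$-specialization $\Phi^\varepsilon$. Your phrase ``one constructs a rational linear isomorphism'' papers over precisely this step. Worse, your step~2 also asserts that $L$ carries $V_i^G$ to $V_i^H$ for each $i$, which is essentially the conclusion of the theorem; if that were available directly from \Cref{thm:profinite-univ-hom} the rest of your argument would be unnecessary.

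Second, the interface with \cite{HughesKielak2022} is not a black box that works for any $L$: \cite[Thm.\ 4.12]{HughesKielak2022} is a statement about $\varepsilon$-specializations. In the paper's proof, \Cref{lemma:preservefiberedness} (``if $\Phi^\varepsilon(\varphi)$ is fibred then $\varphi$ is fibred'') is quoted for the specific map $\Phi^\varepsilon$ and only in one direction. The paper uses it, via contrapositive, to get the one inclusion $\Phi^\varepsilon_\RR\bigl(\bigcup_j W_j\bigr)\subseteq\bigcup_i V_i$. Upgrading this inclusion to an equality, and further to a bijection between the individual subspaces $W_j$ and $V_i$, is \emph{not} done via TAP at all; it is a separate topological and combinatorial argument. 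The key observations are that each $V_i$ and $W_j$ has positive even real dimension (so $V_i\setminus\{0\}$ is connected), that the $V_i$ pairwise intersect trivially (so the $V_i\setminus\{0\}$ are the connected components of the complement), and that \Cref{thm:profinite-univ-hom} matches the multiset of dimensions $\{\dim V_i\}$ with $\{\dim W_j\}$. A greedy dimension count then forces $\Phi^\varepsilon_\RR$ to send each $W_j$ onto some $V_{\sigma(j)}$. Your plan to ``handle overlaps by invoking $\TAP_1(R)$'' does not match this: TAP controls individual characters, not which linear subspace a character is assigned to when the union is not a direct sum.

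So: you have the correct list of tools, but the actual proof hinges on (a) Liu's $\varepsilon$-specialization as the only honest way to produce $L$, (b) the fact that Hughes--Kielak only applies to that $L$ and only yields a one-directional implication, and (c) a separate connectedness/dimension-counting argument to sort out the linear subspaces. These three points are the technical content that your proposal leaves unaddressed.
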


In the above statement, recall that $L_{\RR}$ stands for the extension of $L$ to a map $H^1(G,\RR)\to H^{1}(H,\RR)$.

Before proving \Cref{thmx.BNS.profinite}, we need to recall several notions introduced by Liu~\cite{Liu23}. Let $A$ and $B$ be a pair of finitely generated free abelian groups and $\Phi : \widehat{A}\to\widehat{B}$ be a continuous homomorphism between their profinite completions. Any basis of $A$ as a $\Z$-module can be considered as a basis of $\widehat{A}$ as a $\widehat{\Z}$-module, and similarly for $B$. Therefore, if we fix two bases $v=(v_j)_{1\le j\le a}$ and $w=(w_t)_{1\le t\le b}$ of $A$ and $B$, respectively, the morphism $\Phi$ can be represented by a matrix $M(\Phi,v,w)$ of size $b\times a$, with coefficients in $\widehat{\Z}$. The following definition is due to Liu~\cite[\S 3]{Liu23}.

\begin{defn}\label{def:mc-phi}
The \emph{matrix coefficient module} 
$\mathrm{MC}(\Phi)$ of $\Phi$ is the $\Z$-submo\-dule of $\widehat{\Z}$ spanned by the coefficients of $M(\Phi , v,w)$. 
\end{defn}

One readily checks that this module does not depend on the choices of the bases $v$ and $w$. Alternatively, one can say that $\mathrm{MC}(\Phi)$ is the smallest $\Z$-submodule $L\subset \widehat{\Z}$ such that $\Phi(A)\subset B\otimes_{\Z}L$. By definition, $\mathrm{MC}(\Phi)$ is finitely generated. It is moreover free, since $\widehat{\Z}$ is torsion free. 

We now consider the homomorphism 
\[\begin{tikzcd}\mathrm{MC}(\Phi) \arrow[r]  & \Z/2\Z \end{tikzcd}\]
obtained by composing the inclusion of $\mathrm{MC}(\Phi)$ in $\widehat{\Z}$ with the canonical morphism $\widehat{\Z}\to \Z/2\Z$. Since $\mathrm{MC}(\Phi)$ is free, we can choose a lift ${\varepsilon : \mathrm{MC}(\Phi)\to \Z}$ of this morphism. Once $\varepsilon$ is fixed, we define the \emph{$\varepsilon$-specialization of $\Phi$} as the homomorphism $\Phi_{\varepsilon} : A \to B$ obtained by composing the two morphisms in the following diagram: 
    \[
        \begin{tikzcd}
            A \arrow[r, "\Phi"] & B\otimes_\Z \mathrm{MC}(\Phi) \arrow[r, "1\otimes\varepsilon"] & B\otimes_\Z \Z =B.
        \end{tikzcd}
    \]
We also consider the dual $\Phi^{\varepsilon}: H^1(B,\Z)\to H^1(A,\Z)$ of $\Phi_{\varepsilon}$, called the \emph{dual $\varepsilon$-specialization of $\Phi$}. Note that both $\Phi_{\varepsilon}$ and $\Phi^{\varepsilon}$ depend on $\varepsilon$. If $\Phi$ is an isomorphism, then $\Phi_{\varepsilon}$ and $\Phi^{\varepsilon}$ have finite cokernel; see \cite[Lem.\ 4.10]{HughesKielak2022}.

\begin{proof}[Proof of \Cref{thmx.BNS.profinite}]
    Let $G$ and $H$ be K\"ahler groups as in the statement of the theorem and let $A\coloneq H_{1}(G,\Z)/{\rm Torsion}$ and $B\coloneq H_{1}(H,\Z)/{\rm Torsion}$. Fix an isomorphism $\Psi : \widehat G\to\widehat H$ between the profinite completions of $G$ and $H$. Then $\Psi$ induces an isomorphism
    \[
        \widehat{H_{1}(G,\Z)} \longrightarrow \widehat{H_1(H,\Z)}
    \]
    that in turns induces an isomorphism
    \[
        \Phi : \widehat{A}\longrightarrow \widehat{B}.
    \]
    We can then apply the previous discussion to $\Phi$ and its coefficient module ${\mathrm{MC}(\Phi)\subset \widehat{\Z}}$. Fix a morphism $\varepsilon : \mathrm{MC}(\Phi)\to \Z$ as above and consider the dual $\varepsilon$-specialization $\Phi^{\varepsilon} : H^{1}(H,\Z)\to H^{1}(G,\Z)$. Since $\Phi^{\varepsilon}$ has finite cokernel, we obtain an isomorphism $\Phi^{\varepsilon}_{\R} :  H^1(H;\R)\to H^1(G;\R)$, induced by $\Phi^{\varepsilon}$.
    
    Before continuing the proof we recall that if $(\varrho_1 , \ldots , \varrho_r): G\to \Gamma_1 \times \cdots \times \Gamma_r$ is the universal homomorphism of $G$, and if
    \[
        V_i={\rm Im} (\varrho_{i}^{\ast} : H^{1}(\Gamma_{i},\R)\to H^{1}(G,\R))
    \]
    for $1\le i \le r$, then the BNS invariant of $G$ is the complement of the union
    \begin{equation}\label{eq:bnspre}
        \bigcup_{i=1}^{r}V_{i}\subset H^{1}(G,\R).
    \end{equation}
    Similarly the BNS invariant of $H$ is the complement of a finite union
    \begin{equation}\label{eq:bnspredeux}
        \bigcup_{j=1}^{r}W_{j}\subset H^{1}(H,\R),
    \end{equation}
    where the $W_j$ are obtained using the universal homomorphism of $H$. The number of subspaces in~\eqref{eq:bnspre} and~\eqref{eq:bnspredeux} are equal thanks to \Cref{thm:profinite-univ-hom}. We note that the $V_i$ and the $W_j$ are defined over $\mathbb{Q}$. We will use the following lemma:

    \begin{lemma}\label{lemma:preservefiberedness}
        Given $\varphi \in H^{1}(H,\Z)$, if $\Phi^{\varepsilon}(\varphi)$ is fibred then $\varphi$ is fibred.    
    \end{lemma}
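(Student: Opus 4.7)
The plan is to combine \Cref{prop.Kahler.TAP} with the twisted Alexander polynomial comparison machinery of Hughes--Kielak. Since $G$ and $H$ are K\"ahler, \Cref{thm:Delzant} gives $\Sigma^{1}(G)=-\Sigma^{1}(G)$ and $\Sigma^{1}(H)=-\Sigma^{1}(H)$, so on each side ``fibred'' and ``semi-fibred'' coincide. By \Cref{prop.Kahler.TAP}, both $G$ and $H$ lie in $\TAP_{1}(R)$ for every Noetherian UFD $R$. Fixing such an $R$, the hypothesis that $\Phi^{\varepsilon}(\varphi)$ is fibred in $G$ becomes
\[
\Delta^{\Phi^{\varepsilon}(\varphi),\alpha_{G}}_{1,R}(t)\neq 0
\]
for every epimorphism $\alpha_{G}:G\twoheadrightarrow Q$ onto a finite group, and the conclusion that $\varphi$ is fibred in $H$ becomes the analogous statement for all epimorphisms $\alpha_{H}:H\twoheadrightarrow Q$.

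Next, I would use $\Psi:\widehat{G}\to\widehat{H}$ to put the finite quotients in bijection: any $\alpha_{H}:H\twoheadrightarrow Q$ extends continuously to $\widehat{H}\twoheadrightarrow Q$, and pulling back by $\Psi$ gives a unique $\alpha_{G}:G\twoheadrightarrow Q$ with $\widehat{\alpha}_{G}=\widehat{\alpha}_{H}\circ\Psi$. The key step is to establish the implication
\[
\Delta^{\Phi^{\varepsilon}(\varphi),\alpha_{G}}_{1,R}(t)\neq 0 \ \Longrightarrow\ \Delta^{\varphi,\alpha_{H}}_{1,R}(t)\neq 0
\]
for each matched pair $(\alpha_{G},\alpha_{H})$. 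This is exactly the content of the Hughes--Kielak specialization argument in \cite[\S 4]{HughesKielak2022}, itself an adaptation of Liu~\cite{Liu23}: the matrix coefficient module $\mathrm{MC}(\Phi)$ and the lift $\varepsilon$ have been chosen so that the dual specialization $\Phi^{\varepsilon}$ identifies a completed form of the twisted Alexander module of $(\varphi,\alpha_{H})$ with that of $(\Phi^{\varepsilon}(\varphi),\alpha_{G})$, in such a way that the order of the specialized module is obtained by applying $1\otimes\varepsilon$ to the order of the completed module. Hence vanishing on the $H$ side would force vanishing on the $G$ side, giving the desired implication. Combining with the first step transports fibredness of $\Phi^{\varepsilon}(\varphi)$ to fibredness of $\varphi$.

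The main obstacle is the twisted Alexander polynomial comparison itself. One must check that the $\varepsilon$-specialization does not destroy nonvanishing of the order, which amounts to a careful analysis of how $\mathrm{MC}(\Phi)$ interacts with the twisted homology modules $H_{1}(G;RQ[t^{\pm 1}])$ and $H_{1}(H;RQ[t^{\pm 1}])$ under the profinite isomorphism $\Psi$. This is precisely the technical heart of the Hughes--Kielak framework, and \Cref{prop.Kahler.TAP} is what permits it to apply verbatim to K\"ahler groups, yielding the lemma.
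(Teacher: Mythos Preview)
Your proposal is correct and takes essentially the same approach as the paper: the paper's proof is the single line ``This is a direct consequence of \cite[Thm.~4.12]{HughesKielak2022} with $n=1$,'' and your outline (invoke $\TAP_1$ for both groups via \Cref{prop.Kahler.TAP}, match finite quotients through $\Psi$, and use the Hughes--Kielak $\varepsilon$-specialization comparison of twisted Alexander polynomials) is precisely an unpacking of what that cited theorem asserts and how it is proved. The only difference is presentational---you sketch the mechanism, while the paper treats \cite[Thm.~4.12]{HughesKielak2022} as a black box.
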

    \begin{proof}
        This is a direct consequence of \cite[Thm.\ 4.12]{HughesKielak2022} with $n=1$.
    \end{proof}

    We now conclude the proof of \Cref{thmx.BNS.profinite} using the lemma. We want to prove that 
    \begin{equation}\label{eq:mapbnstobns}
        \Phi^{\varepsilon}_{\R}(\bigcup_{i=1}^{r}W_i)=\bigcup_{j=1}^{r}V_{j}.
    \end{equation}
    We will first prove that this equality holds for rational points. So let $a\in  \bigcup_{i=1}^{r}W_i$ be a rational point. According to \Cref{thm:Delzant} and \Cref{lemma:preservefiberedness}, $\Phi^{\varepsilon}(a)$ belongs to the union of the $V_j$. Since the $W_j$ are rationally defined, this implies that 
    \begin{equation}\label{eq:uneinclprte}
        \Phi^{\varepsilon}_{\R}(\bigcup_{j=1}^{r}W_{j})\subset \bigcup_{i=1}^{r}V_i.
    \end{equation}
    Note that each subspace $V_i$ (or $W_j$) has (positive) even real dimension, hence $V_{i}-\{0\}$ is connected. Since the $V_i$ have pairwise trivial intersection, the connected components of
    \[
        \left( \bigcup_{i=1}^{r}V_{i}\right) -\{0\}
    \]
    are the $V_{i}-\{0\}$. Since $\Phi^{\varepsilon}_{\R}(W_{j}-\{0\})$ is connected it must be contained in one of the $V_{i}-\{0\}$. Note that \Cref{thm:profinite-univ-hom} implies that the set of dimensions of the $V_i$ coincides with the set of dimension of the $W_{j}$. Up to permutation, we assume that
    \[
        {\rm dim}(W_{1})= \underset{1\le j\le r}{{\rm max}}\; {\rm dim}(W_{j})=\underset{1\le j\le r}{{\rm max}}\; {\rm dim}(V_{j}).
    \]
    If $i$ is such that $\Phi^{\varepsilon}_{\R}(W_{1}-\{0\})\subset V_{i}$, we then necessarily have $\Phi^{\varepsilon}_{\R}(W_{1})=V_{i}$. Up to reordering the $V_{t}$ we assume $i=1$. We thus have $\Phi^{\varepsilon}_{\R}(W_1)=V_1$ and
    \[
        \Phi^{\varepsilon}_{\R}(\bigcup_{j=2}^{r}W_j)\subset \bigcup_{i=2}^{r}V_{i}.
    \]
    Now, by choosing a subspace $W_{j_{0}}$ of maximal dimension among $W_{2}, \ldots , W_{r}$, we prove as above that $\Phi^{\varepsilon}_{\R}(W_{j_{0}})=V_i$ for some index $i\in \{2, \ldots , r\}$. Repeating this argument we see that the inclusion~\eqref{eq:uneinclprte} is promoted to equality in \eqref{eq:mapbnstobns} as desired.
\end{proof}

\section{Kodaira fibrations}\label{sec.quests}

Some important and well-studied K\"ahler groups not covered by the previous results in this paper are fundamental groups of Kodaira fibrations. Recall that a \emph{Kodaira fibration} $X$ is a complex surface endowed with a holomorphic submersion onto a Riemann surface that has connected fibers and is not isotrivial. It is known that the genus of both the base and the fiber must then be greater than $1$. See \cite{Cat-17} for an introduction to these complex surfaces. The fundamental group of a Kodaira fibration $X$ is called a \emph{Kodaira fibration group}. If $b$ and $f$ are respectively the genus of the base and fiber of the fibration, we say that $X$ and $\pi_1(X)$ have type $(f,b)$. Note that $X$ may fibre over a surface in more than one way, and so it may have multiple types.

The goal of this section is to raise the following structural question about normal subgroups of K\"ahler groups and explain its relation to the profinite rigidity problem for fundamental groups of Kodaira fibrations.

\begin{question}\label{Quest.NormalSubs.b1}
    Let $G$ be a K\"ahler group and $N\trianglelefteq G$ be a finitely generated normal subgroup. If $b_1^{(2)}(N)>0$, is $N$ virtually isomorphic to a surface group?
\end{question}

One could also consider the more restricted problem where $N$ is the kernel of the homomorphism to a hyperbolic orbisurface group induced by a fibration. A related question appears in~\cite[p.\ 3000]{nicolas}: it is unknown if a finitely generated nonabelian free group can appear as a normal subgroup of a K\"ahler group. The final result of this paper proves that a positive answer to \Cref{Quest.NormalSubs.b1} yields a weak form of profinite rigidity for Kodaira fibration groups.

\begin{prop}\label{Kodaira.conditional}
    Suppose that \Cref{Quest.NormalSubs.b1} has a positive answer and let $G$ be a Kodaira fibration group of type $(f,b)$. The following properties hold.
    \begin{enumerate}
        \item If $H\in\calg_\calk(G)$, then $H$ fits into an extension
         \[
        \begin{tikzcd}
            1 \arrow[r] & \pi_1 (\Sigma_f ) \arrow[r] & H \arrow[r] & \pi_1 (\Sigma_b ) \arrow[r] & 1, 
        \end{tikzcd}
    \]
    where $\Sigma_f$ and $\Sigma_b$ are closed oriented surfaces of genus $f$ and $b$ respectively. 
        \item If $X$ is a closed aspherical K\"ahler manifold such that $\pi_{1}(X)\in \calg_\calk(G)$, then $X$ is a Kodaira fibration. 
    \end{enumerate}
   \end{prop}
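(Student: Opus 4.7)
The plan is to establish parts (1) and (2) in turn. For (1), the Kodaira projection $G\onto\pi_{1}(\Sigma_{b})$ will transport to a matching component of the universal homomorphism of $H$ via \Cref{thm:profinite-univ-hom}, after which the conjectural positive answer to \Cref{Quest.NormalSubs.b1} together with Bridson-Conder-Reid will pin down the fibre kernel on the $H$ side. Part (2) will follow from (1) by realising the resulting extension as a holomorphic fibration on $X$, applying an Euler characteristic computation to force smoothness of every fibre, and ruling out isotriviality via a signature comparison.

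For (1): the Kodaira projection $\phi_{b}:G\onto\pi_{1}(\Sigma_{b})$ has kernel $\pi_{1}(\Sigma_{f})$, which is finitely generated, so by \Cref{thm:UnivHom}(3) it is a component of the universal homomorphism of $G$. For $H\in\calg_{\calk}(G)$, \Cref{thm:profinite-univ-hom} produces a corresponding component $\psi_{b}:H\onto\pi_{1}(\Sigma_{b})$ of the universal homomorphism of $H$ (using \cite[Thm.\ 1.1]{BriConRei-16} to identify hyperbolic orbisurface groups with the same profinite completion). Set $N\coloneq\ker\psi_{b}$, which is finitely generated. Since $\pi_{1}(\Sigma_{b})$ is $2$-good, the extensions $1\to\pi_{1}(\Sigma_{f})\to G\to\pi_{1}(\Sigma_{b})\to 1$ and $1\to N\to H\to\pi_{1}(\Sigma_{b})\to 1$ induce exact sequences of profinite completions, and comparing them through the chosen isomorphism $\widehat{G}\cong\widehat{H}$ yields $\widehat{N}\cong\widehat{\pi_{1}(\Sigma_{f})}$. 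As $N$ is a finitely generated residually finite group mapping densely to $\widehat{N}$, \Cref{thm:l2-betti-fingen} gives $b_{1}^{(2)}(N)\ge 2f-2>0$. The assumed positive answer to \Cref{Quest.NormalSubs.b1} then implies $N$ is virtually a closed hyperbolic surface group $K$, which we may take to be characteristic in $N$. Since $K$ is centreless, the centraliser $Z_{N}(K)$ is a finite normal subgroup of $N$; by \Cref{prop:no-finite-normal-subgroups} and \Cref{lem:finite-normal-profinite-image}, this forces $Z_{N}(K)=1$, so that $N/K\hookrightarrow\Out(K)$ is a finite subgroup of the mapping class group of a closed hyperbolic surface. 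By Kerckhoff's solution of the Nielsen realisation problem together with the vanishing $H^{2}(N/K;Z(K))=0$ (classifying extensions with trivial centre), one realises $N$ as a cocompact Fuchsian group; that $N$ is orientable rather than NEC follows because its profinite abelianisation $\widehat{N}^{\mathrm{ab}}\cong\widehat{\mathbb{Z}}^{2f}$ is torsion-free. Finally, \cite[Thm.\ 1.1]{BriConRei-16} yields $N\cong\pi_{1}(\Sigma_{f})$, proving (1).

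For (2): by (1), $H=\pi_{1}(X)$ fits in an extension with kernel $\pi_{1}(\Sigma_{f})$ and quotient $\pi_{1}(\Sigma_{b})$ whose right-hand map is a factor of the universal homomorphism of $H$, hence induced by a holomorphic fibration $p:X\to B$ to a closed Riemann surface $B$ of genus $b$, with no multiple fibres (as the target of the universal factor carries trivial orbifold structure). Asphericity of $X$ and multiplicativity of Euler characteristic in extensions of type $\mathsf{F}$ groups gives $\chi(X)=\chi(H)=\chi(\Sigma_{f})\chi(\Sigma_{b})=\chi(B)\chi(F)$, where $F$ is a smooth fibre. The classical formula
\[
\chi_{\mathrm{top}}(X)=\chi_{\mathrm{top}}(B)\chi_{\mathrm{top}}(F)+\sum_{s\in B}\bigl(\chi_{\mathrm{top}}(p^{-1}(s))-\chi_{\mathrm{top}}(F)\bigr)
\]
together with the nonnegativity of the correction terms for holomorphic fibrations of surfaces onto curves then forces every fibre of $p$ to be smooth, so that $p$ is a holomorphic submersion. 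To rule out isotriviality, observe that an isotrivial $p$ would admit a finite étale base change trivialising it as a product, so that $X$ would be finitely covered by a product of Riemann surfaces and $H$ would be virtually a direct product of two hyperbolic surface groups. By \cite[Prop.\ 4.4]{Rei-15} applied to $\widehat{G}\cong\widehat{H}$ and by \Cref{cor:rigidity-of-direct-products}, $G$ would have a corresponding finite-index subgroup isomorphic to the same direct product of surface groups, and \Cref{thm:aspherical-case} would then identify the associated finite cover of the original Kodaira surface with a product of Riemann surfaces. This contradicts the nonvanishing of the signature of a Kodaira fibration, which is multiplicative under finite covers. Therefore $p$ is a Kodaira fibration, establishing (2).

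The principal difficulty lies in the step identifying $N$ with $\pi_{1}(\Sigma_{f})$: upgrading ``virtually a surface group'' to a genuine cocompact Fuchsian group requires the centraliser argument, Kerckhoff's Nielsen realisation, and the vanishing of a second cohomology, after which the Bridson-Conder-Reid theorem fixes the isomorphism type. Verifying non-isotriviality in (2) is a secondary technicality, handled by transporting the (virtual) product structure across the profinite equivalence and appealing to the signature invariant of Kodaira fibrations.
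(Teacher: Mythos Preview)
Your argument for (1) follows the same path as the paper: transfer the surface-group quotient via \Cref{thm:profinite-univ-hom}, compare kernels through the induced exact sequences of profinite completions, invoke \Cref{thm:l2-betti-fingen} to get $b_1^{(2)}(N)>0$, and apply the assumed answer to \Cref{Quest.NormalSubs.b1}. The paper handles the upgrade from ``virtually a surface group'' to ``surface group'' more economically: since $H$ is residually finite, $N$ embeds in its closure $\overline{N}\cong\widehat{\pi_1(\Sigma_f)}$, which is torsion-free \cite[Cor.~7.6]{Rei-15}, so $N$ itself is torsion-free; Nielsen realisation then yields that $N$ is a closed surface group directly. Your centraliser/$H^2$/orientability route reaches the same conclusion but is more circuitous. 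For the final identification of the genus, the paper observes that once $N$ is a surface group, $H$ is surface-by-surface hence good, so $\widehat{N}$ coincides with $\overline{N}\cong\widehat{\pi_1(\Sigma_f)}$ and thus $g=f$; this is equivalent to your appeal to Bridson--Conder--Reid.

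For (2) your approach differs substantially from the paper's, which simply notes that a surface-by-surface fundamental group forces $\dim_{\mathbb C}X=2$ and then quotes \cite[Prop.~1]{Kos-99} or \cite{kapovich1998normal} to conclude that $X$ is a Kodaira fibration. Your hands-on route via the Euler-characteristic defect formula and a signature contradiction is essentially sound but has gaps. First, you never establish $\dim_{\mathbb C}X=2$; this follows since $H$ has cohomological dimension $4$ and $X$ is aspherical. Second, the equality $\chi(\Sigma_f)\chi(\Sigma_b)=\chi(B)\chi(F)$ is asserted without justification: a priori one only has a surjection $\pi_1(F)\twoheadrightarrow N\cong\pi_1(\Sigma_f)$, giving $\mathrm{genus}(F)\ge f$ and hence $\chi(F)\le\chi(\Sigma_f)$. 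Since $\chi(\Sigma_b)<0$, this yields $\chi(B)\chi(F)\ge\chi(X)$, and combining with the defect formula forces both $\chi(F)=\chi(\Sigma_f)$ and smoothness of all fibres simultaneously---so the conclusion is correct, but the argument must be rearranged. Third, nonnegativity of the defect terms requires the fibration to be relatively minimal; you should note that an aspherical K\"ahler surface contains no rational curves (any such curve would be null-homologous by $\pi_2(X)=0$, contradicting positivity against the K\"ahler class), so this is automatic. With these patches your argument goes through, and the non-isotriviality step---transporting a virtual product structure back to the original Kodaira surface via \cite[Prop.~4.4]{Rei-15}, \Cref{cor:rigidity-of-direct-products}, and \Cref{thm:aspherical-case}, then contradicting nonvanishing signature---is correct.
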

\begin{proof}
    First, we observe that the group $G$ is good in the sense of Serre by Lemma 7.2 and Theorem 7.3 in~\cite{Rei-15}. Then, by \cite[Prop.\ 7.10]{Rei-15}, there is a short exact sequence:
    \[
        \begin{tikzcd}
            1 \arrow[r] & \widehat{\pi_1 (\Sigma_f)} \arrow[r] & \widehat G \arrow[r] & \widehat{\pi_1 \Sigma_b)} \arrow[r] & 1
        \end{tikzcd}
    \]
    where $\Sigma_f$ and $\Sigma_b$ are closed oriented surfaces of genus $f$ and $b$ respectively. Now suppose $H\in\calg_\calk(G)$. Since $\pi_1(\Sigma_b)$ appears as a factor for the universal homomorphism of $G$, \Cref{thm:profinite-univ-hom} implies that there is a homomorphism $H\onto \pi_1(\Sigma_{b})$ with finitely generated kernel $N$. The closure of $N$ in $\widehat{H}$ is moreover isomorphic to $\widehat{\pi_1 (\Sigma_f)}$. \Cref{thm:l2-betti-fingen} implies that $b_1^{(2)}(N)>0$. If \Cref{Quest.NormalSubs.b1} has a positive answer, then $N$ is commensurable with a surface group, hence itself isomorphic to a surface group by the Nielsen realization problem.
    
    Note that $N$ is torsion free, since it is a subgroup of the torsion free group $\widehat{\pi_1 (\Sigma_f)}$ (see~\cite[Cor.\ 7.6]{Rei-15} for the latter point). It follows that $H$ is an extension
    \[
        \begin{tikzcd}
            1 \arrow[r] & \pi_1 (\Sigma_g) \arrow[r] & H \arrow[r] & \pi_1 (\Sigma_b) \arrow[r] & 1
        \end{tikzcd}
    \]
    where $\Sigma_g$ is a closed oriented surface of genus $g$. Then \cite[Lem.\ 7.2]{Rei-15} implies that $H$ is good, which in turn yields that the profinite completion of 
    \[
        N\cong \pi_1 ( \Sigma_g)
    \]
    is isomorphic to its closure in $\widehat{H}$, i.e., to $\widehat{\pi_1 (\Sigma_f)}$. This implies that $g=f$ and proves the first point of the proposition. To prove the second point of the proposition, if $X$ is a closed aspherical K\"ahler manifold such that $\pi_1 (X)\in \calg_\calk(G)$, then $\pi_{1}(X)$ is a surface-by-surface group by the first point, hence $X$ has complex dimension $2$. Applying~\cite[Prop.\ 1]{Kos-99} or~\cite{kapovich1998normal} implies that $X$ is a Kodaira fibration.
\end{proof}

\begin{remark}
    \Cref{Kodaira.conditional} suggests the following problem: is it true that a K\"ahler group that is also a surface-by-surface group can be realized as a Kodaira fibration group? Although this might be true, we did not try to establish this fact. See \cite{vidussi-note} for a related discussion.
\end{remark}

Despite the conditional result above, it is not clear whether or not one should expect fundamental groups of Kodaira fibrations to be profinitely rigid amongst residually finite K\"ahler groups. A result of Gonz\'alez-Diez \cite{GD-20} shows that for each extension of a nontrivial element $\sigma\in\mathrm{Gal}(\overline{\Q}/\Q)$ not conjugate to complex conjugation to an element of $\Aut(\CC)$, there exists a Kodaira fibration $X$ with a Galois conjugate $X^\sigma$ such that their universal covers are not isomorphic. It would be interesting to understand the fundamental groups of these examples in terms of whether or not Kodaira fibration groups can have arbitrarily large $\calk$-profinite genus.

\bibliographystyle{halpha}
\bibliography{refs.bib}

\end{document}